\documentclass[pdflatex,sn-mathphys,Numbered]{sn-jnl}

\usepackage[T1]{fontenc}
\usepackage{amssymb}
\usepackage{amsmath}
\usepackage[mathscr]{eucal}
\usepackage{graphicx}
\usepackage{array}
\usepackage{xcolor}
\usepackage{manyfoot}

\hypersetup{hidelinks,pdftitle={Constrained homomorphism orders},pdfauthor={Jiri Fiala, Jan Hubicka, Yangjing Long},pdfkeywords={graph homomorphism, partial order, graph core, duality, constrained homomorphism, locally constrained homomorphism}}
\providecommand{\burl}[1]{\url{#1}}

\newcolumntype{L}[1]{>{\raggedright\arraybackslash}p{#1}}
\makeatletter

\newcommand{\Graphs}[0]{\ensuremath{\mathsf{Graphs}}}
\newcommand{\ConnGraph}[0]{\ensuremath{\mathsf{ConnGraph}}}
\newcommand{\DiGraphs}[0]{\ensuremath{\mathsf{DiGraphs}}}
\newcommand{\DiCycle}[0]{\ensuremath{\mathsf{DiCycle}}}
\newcommand{\DiCycles}[0]{\ensuremath{\mathsf{DiCycles}}}
\newcommand{\Cycle}[0]{\ensuremath{\mathsf{Cycle}}}
\newcommand{\Cycles}[0]{\ensuremath{\mathsf{Cycles}}}
\newcommand{\UnorPath}[0]{\ensuremath{\mathsf{UnorPath}}}
\newcommand{\Path}[0]{\ensuremath{\mathsf{Path}}}

\newcommand{\Hom}{\mathop{\to}}

\newcommand{\SurHom}{\mathop{\xrightarrow{_S}}}
\newcommand{\VSurHom}{\mathop{\xrightarrow{_{VS}}}}

\newcommand{\FullHom}{\mathop{\xrightarrow{_F}}}
\newcommand{\MonoHom}{\mathop{\xrightarrow{_M}}}
\newcommand{\EmbedHom}{\mathop{\xrightarrow{_E}}}

\newcommand{\LocInHom}{\mathop{\xrightarrow{_{LI}}}}
\newcommand{\LocBiHom}{\mathop{\xrightarrow{_{LB}}}}

\newcommand{\Pfin}{\ensuremath{P_{\mathrm{fin}}}}
\newcommand{\subsetLeq}[1]{\mathop{\leq^{\mathrm{dom}}_#1}}

\newcommand{\SurLeq}{\mathop{\leq^S}}
\newcommand{\VSurLeq}{\mathop{\leq^{VS}}}
\newcommand{\ESurLeq}{\mathop{\leq^{ES}}}
\newcommand{\LocSurLeq}{\mathop{\leq^{LS}}}
\newcommand{\LocInLeq}{\mathop{\leq^{LI}}}
\newcommand{\LocInL}{\mathop{<^{LI}}}
\newcommand{\LocBiLeq}{\mathop{\leq^{LB}}}
\newcommand{\EmbedLeq}{\mathop{\leq^E}}

\newcommand{\MonoLeq}{\mathop{\leq^M}}
\newcommand{\FullLeq}{\mathop{\leq^F}}

\newcommand{\K}{\mathop{\mathcal{K}}}

\let\Homeq\sim

\newcommand{\FullHomeq}{\mathop{\overset{_F}{\Homeq}}}
\newcommand{\SurHomeq}{\mathop{\overset{_S}{\Homeq}}}
\newcommand{\Matrixeq}{\mathop{\overset{_M}{\Homeq}}}

\newcommand{\drm}{\mathrm{drm}}

\let\notto\nrightarrow

\newtheorem{defn}{Definition}[section]

\newtheorem{thm}{Theorem}[section]

\newtheorem{corollary}[thm]{Corollary}
\newtheorem{lem}[thm]{Lemma}
\newtheorem{prop}[thm]{Proposition}

\DeclareRobustCommand{\qed}{%
  \ifmmode \mathqed
  \else
    \leavevmode\unskip\penalty9999 \hbox{}\nobreak\hfill
    \quad\hbox{$\square$}%
  \fi
}
\let\QED@stack\@empty
\let\qed@elt\relax
\newcommand{\pushQED}[1]{%
  \toks@{\qed@elt{#1}}\@temptokena\expandafter{\QED@stack}%
  \xdef\QED@stack{\the\toks@\the\@temptokena}%
}
\newcommand{\popQED}{%
  \begingroup\let\qed@elt\popQED@elt \QED@stack\relax\relax\endgroup
}
\def\popQED@elt#1#2\relax{#1\gdef\QED@stack{#2}}
\newcommand{\qedhere}{%
  \begingroup \let\mathqed\math@qedhere
    \let\qed@elt\setQED@elt \QED@stack\relax\relax \endgroup
}

\providecommand{\proofname}{Proof}
\newenvironment{proof}[1][\proofname]{\par
  \pushQED{\qed}%
  \normalfont \topsep6\p@\@plus6\p@\relax
  \trivlist
  \item[\hskip\labelsep \bf {#1\ignorespaces.}]\ignorespaces
}{%
\popQED\endtrivlist
\par
}
\newenvironment{proof*}[1][\proofname]{\par
  \normalfont \topsep6\p@\@plus6\p@\relax
  \trivlist
  \item[\hskip\labelsep \bf {#1\ignorespaces.}]\ignorespaces
}{%
\endtrivlist
\par
}

\providecommand{\remarkname}{Remark}

\makeatother

\articletype{Research Article}

\title[Constrained homomorphism orders]{Constrained homomorphism orders}

\author[1]{\fnm{Ji\v{r}\'\i{}} \sur{Fiala}}
\email{fiala@kam.mff.cuni.cz}

\author[1]{\fnm{Jan} \sur{Hubi\v{c}ka}}
\email{hubicka@kam.mff.cuni.cz}

\author*[2]{\fnm{Yangjing} \sur{Long}}
\email{yangjing@ccnu.edu.cn}

\affil[1]{\orgdiv{Department of Applied Mathematics}, \orgname{Charles University}, \orgaddress{\city{Prague}, \country{Czech Republic}}}
\affil[2]{\orgdiv{School of Mathematics and Statistics}, \orgname{Central China Normal University}, \orgaddress{\city{Wuhan}, \country{China}}}

\abstract{We study partial orders induced by constrained variants of finite graph homomorphisms: monomorphisms, embeddings, full homomorphisms, vertex-surjective, edge-surjective and surjective homomorphisms, and locally injective, locally surjective and locally bijective homomorphisms. For each order we ask for analogues of the standard structural properties of the graph homomorphism order: canonical cores, past- or future-finiteness, universality, gaps and finite dualities. The comparison shows which phenomena are specific to ordinary homomorphisms and which are consequences of simpler order-theoretic mechanisms. We identify cores for full and surjective homomorphisms, relate full-homomorphism cores to point-determining graphs, characterize gaps in the full homomorphism order, and give finite obstruction bounds for several one-sided finite orders. We also analyze locally constrained homomorphisms on connected graphs. In particular, locally injective homomorphisms have all connected graphs as cores, admit infinite-chain density under natural degree-refinement assumptions, have explicit gap witnesses, and are universal already on finite connected bipartite subcubic cactus graphs. The paper reorganizes and extends several earlier arguments into a single framework for constrained homomorphism orders.}

\keywords{graph homomorphism; partial order; graph core; duality; constrained homomorphism; locally constrained homomorphism}

\begin{document}
\maketitle

\section{Introduction}
\label{sec:introduction}

Graph homomorphisms form one of the central organizing notions in structural graph theory.  They generalize graph colorings, give rise to constraint satisfaction problems, and produce a rich partial order after graphs that are homomorphically equivalent are identified by their cores; see the monograph of Hell and Ne\v{s}et\v{r}il~\cite{Hell2004}.  The ordinary homomorphism order is exceptional in several ways: it is universal, its gaps and dualities are well understood, and many of its suborders retain considerable complexity.

The purpose of this paper is to compare this classical picture with the orders obtained when the homomorphisms are required to satisfy additional local or global constraints.  The constraints considered here include injectivity, fullness, vertex and edge surjectivity, and local injectivity, local surjectivity and local bijectivity.  The guiding question is the following: which order-theoretic phenomena survive under these constraints, and which ones are special to the ordinary homomorphism order?

\subsection{Graph homomorphism orders}

A \emph{directed graph} $G$ is a pair $G=(V_G,E_G)$ with $E_G\subseteq V_G\times V_G$.  The class of all finite directed graphs is denoted by $\DiGraphs$.  An \emph{undirected graph} is viewed as a directed graph satisfying $(u,v)\in E_G$ if and only if $(v,u)\in E_G$.  Unless explicitly stated otherwise, loops are allowed.  The class of all finite undirected graphs is denoted by $\Graphs$.  The locally constrained part of the paper is the main exception: there, unless explicitly stated otherwise, we work with finite, undirected, loopless graphs and with the usual open neighborhoods.

For a directed graph $G$ and a vertex $u\in V_G$, let $N_G(u)$ be the open neighborhood of $u$.  For directed graphs $G$ and $H$, a \emph{homomorphism} $f:G\to H$ is a map $f:V_G\to V_H$ such that $(u,v)\in E_G$ implies $(f(u),f(v))\in E_H$.  We write $G\Hom H$ when such a map exists.  If $G\Hom H$ and $H\Hom G$, then $G$ and $H$ are \emph{homomorphically equivalent}, written $G\Homeq H$.

The relation $\Hom$ is reflexive and transitive, so it is a quasi-order on $\DiGraphs$.  A finite directed graph is a \emph{core} if it has the smallest possible number of vertices in its homomorphic equivalence class.  Every homomorphic equivalence class contains a unique core up to isomorphism, and this core is an induced subgraph of every graph in the class~\cite{Hell2004}.  We obtain the homomorphism order by taking cores as canonical representatives.  We use the same convention for all constrained homomorphism orders below: after passing to the appropriate cores, the same symbol denotes both the quasi-order and the induced partial order.

A partial order $(P,\leq_P)$ is \emph{universal} if every countable partial order embeds into it.  The homomorphism orders of finite graphs and finite directed graphs are universal; this was first proved in a stronger categorical form by Pultr and Trnkov\'a~\cite{pultr1980combinatorial}.  Several restricted graph classes and related structures are also universal, including set systems~\cite{Nesetril2000}, oriented trees~\cite{Hubicka2005}, oriented paths~\cite{Hubicka2004}, and labeled posets~\cite{Lehtonen2008}.

We shall also use the language of gaps and dualities.  A \emph{gap} in a partial order is a pair $(u,v)$ with $u<v$ and no element strictly between $u$ and $v$.  A partial order without gaps is \emph{dense}.  For a class $\K$ of directed graphs, a pair $(F,D)$ is a \emph{simple duality pair} if, for every $G\in\K$,
$$
G\notto D \quad\hbox{if and only if}\quad F\to G.
$$
More generally, for finite sets of directed graphs $\mathcal{F}$ and $\mathcal{D}$, the pair $(\mathcal{F},\mathcal{D})$ is a \emph{generalized finite duality pair} in $\K$ if, for every $G\in\K$, some $F\in\mathcal{F}$ maps to $G$ precisely when $G$ maps to no member of $\mathcal{D}$~\cite{Foniok2008}.  In the ordinary homomorphism order, gaps and finite dualities are closely related~\cite{Nesetril2000}.

\subsection{Constrained homomorphisms}

We call the following variants of graph homomorphisms \emph{constrained homomorphisms}.

\begin{defn}
A homomorphism $f:G\to H$ is
\begin{itemize}
\item[(M)] a \emph{monomorphism}, if it is injective;
\item[(F)] a \emph{full homomorphism}, if $(f(u),f(v))\in E_H$ implies $(u,v)\in E_G$;
\item[(E)] an \emph{embedding}, if it is both a monomorphism and a full homomorphism;
\item[(VS)] a \emph{vertex-surjective homomorphism}, if $f(V_G)=V_H$;
\item[(ES)] an \emph{edge-surjective homomorphism}, if for every $(u,v)\in E_H$ there is $(u',v')\in E_G$ such that $f(u')=u$ and $f(v')=v$;
\item[(S)] a \emph{surjective homomorphism}, if it is both vertex-surjective and edge-surjective;
\item[(LI)] \emph{locally injective}, if for every vertex $v$ the restriction of $f$ from $N_G(v)$ to $N_H(f(v))$ is injective;
\item[(LS)] \emph{locally surjective}, if for every vertex $v$ the restriction of $f$ from $N_G(v)$ to $N_H(f(v))$ is surjective;
\item[(LB)] \emph{locally bijective}, if the restriction of $f$ from $N_G(v)$ to $N_H(f(v))$ is bijective for every vertex $v$.
\end{itemize}
\end{defn}

We use the symbols attached to these constraints to denote the corresponding existence relations and orders.  Thus $G\FullHom H$ denotes the existence of a full homomorphism from $G$ to $H$, and $G\FullLeq H$ denotes the corresponding order relation on $F$-cores.  The notation is analogous for $\MonoLeq$, $\EmbedLeq$, $\SurLeq$, $\VSurLeq$, $\ESurLeq$, $\LocInLeq$, $\LocSurLeq$ and $\LocBiLeq$.

Recent work confirms that these constrained variants remain active.  Full homomorphisms are studied through obstruction sets for full $H$-colouring~\cite{GuzmanPro2023FullPathsCycles}; surjective homomorphisms and compactions through Sur-CSP, reflexive digraph targets and counting complexity~\cite{Larose2019SurjectiveReflexive,Focke2019CountingSurjective,Chen2024SurjectiveCSP}; locally constrained homomorphisms through algorithmic frameworks and list locally surjective variants~\cite{Bulteau2024Algorithmic,Dvorak2022ListLS}; and graph covers through topological and complexity questions~\cite{KratochvilNedela2025CoversSemicovers}.  These developments are mainly algorithmic, obstruction-theoretic or topological; here we focus on the induced partial orders.

\subsection{Main results and organization}

The paper is organized around the comparison in Table~\ref{tab:main-results}.  Section~\ref{sec:universality} develops the order-theoretic tools used throughout the paper.  Sections~\ref{sec:embedding}, \ref{sec:fullhomo} and \ref{sec:surjhomo} treat the global constraints.  Section~\ref{sec:locally-constrained} treats locally constrained homomorphisms, where the locally injective order on connected graphs is the most similar to the ordinary homomorphism order.

\begin{table}[t]
\caption{Summary of the constrained homomorphism orders studied here.  All graphs are finite unless a connected or directed class is specified.}
\label{tab:main-results}
{\small
\setlength{\tabcolsep}{3pt}
\renewcommand{\arraystretch}{1.12}
\begin{tabular}{L{0.17\textwidth}L{0.24\textwidth}L{0.25\textwidth}L{0.22\textwidth}}
\hline
Order & Cores & Finiteness / universality & Gaps and dualities \\
\hline
M and E & all finite graphs are canonical representatives & past-finite; past-finite-universal on cycles & generalized finite dualities exist with $n+1$-vertex obstructions \\
F & exactly the point-determining graphs & coincides with the embedding order on cores; past-finite-universal on cycles & gaps are characterized; finite dualities have $n+1$-vertex obstructions \\
VS, ES and S & all graphs for VS and S; ES-cores are obtained by deleting isolated vertices & future-finite; future-finite-universal on directed cycles & gaps and finite dualities are characterized \\
LS and LB on connected graphs & every connected graph is a core & future-finite; universal after passing to disconnected unions inside nontrivial degree-refinement classes & many gaps; no generalized finite duality pairs on connected graphs \\
LI on connected graphs & every connected graph is a core & universal already on connected bipartite subcubic cacti; infinite-chain density in natural intervals & explicit gap witnesses; finite dualities occur exactly for finite tree target families \\
\hline
\end{tabular}}
\end{table}

Several ingredients of the paper appeared earlier in extended-abstract form: in particular the universality argument for graph homomorphisms~\cite{Fiala2015Universality} and the gap theorem for full homomorphisms~\cite{Fiala2017GapsFull}.  The present journal version expands these conference versions by giving full proofs in a common notation, by adding the unified treatment of constrained homomorphism orders, by isolating the one-sided-finiteness mechanism behind several duality results, and by making the comparison between the individual orders explicit in Table~\ref{tab:main-results}.

\section{Order-theoretic tools}
\label{sec:universality}

\subsection{Universal partial orders}

In this section we review a basic technique for proving universality of homomorphism orders.
We developed this technique originally to prove universality for locally injective homomorphisms (Theorem~\ref{thm:lochomouniv});
it also appears in~\cite{Fialab,FialaFractal}.

An {\em embedding} of a partial order $(Q,\leq_Q)$ into $(P,\leq_P)$ is a mapping $e:Q\to P$ satisfying
$x\leq_Q y$ if and only if $e(x)\leq_P e(y)$. In such a case we also say that $(Q,\leq_Q)$ is a {\em suborder} of $(P,\leq_P)$.

For a given partial order $(P,\leq)$, the {\em down-set} $\downarrow x$ is $\{y\in P \mid y\leq x\}$. 
Similarly, the {\em up-set} is $\uparrow x = \{y\in P \mid x\leq y\}$.

Any finite partial order $(P,\leq)$ can be represented by finite sets ordered by inclusion, e.g.
when $x$ is represented by $\downarrow x$. 
This is a valid embedding, because $\downarrow x \subseteq\ \downarrow y$ if and only if $x\leq y$.

Without loss of generality we may assume that $P$ is a subset of some fixed countable set $A$, e.g. $\mathbb N$. 
Consequently, the partial order formed by the system $\Pfin(A)$ of all finite subsets of $A$ 
ordered by inclusion contains any finite partial order as a suborder. Such orders are called {\em finite-universal}.
We reserve the term {\em universal} for orders that contain every countable partial order as a suborder.

We briefly sketch a standard construction of a universal partial order, since the same technique is used later for homomorphism orders. 
The universal partial order is built in two steps. 
For these we need further terminology: An order is {\em past-finite}, if every down-set is finite. 
An order is {\em past-finite-universal} if it contains every past-finite order. Analogously, 
{\em future-finite} and {\em future-finite-universal} orders are defined w.r.t. finiteness of up-sets.

\paragraph{1.}
Observe that the mapping $e(x) = \downarrow x$ is also an embedding $e:(P,\leq)\to(\Pfin(A),\subseteq)$ in the case when $(P,\leq)$ 
is past-finite and $P\subseteq A$.
Since a past-finite partial order turns to be future-finite when the direction of inequalities is reversed, we get:

\begin{corollary}
\label{cor:pastfuturefiniteuniv}
For any countably infinite set $A$ it holds that
\begin{itemize}
\item[(i)] the order $(\Pfin(A),\subseteq)$ is past-finite-universal, and
\item[(ii)] the order $(\Pfin(A),\supseteq)$ is future-finite-universal.
\end{itemize}
\end{corollary}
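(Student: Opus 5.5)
The plan is to prove (i) by showing that the down-set map is an order embedding into $(\Pfin(A),\subseteq)$, and then to obtain (ii) from (i) by reversing all orders.

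\emph{Part (i).} Let $(P,\leq)$ be an arbitrary past-finite partial order. Since we work with countable orders, $P$ is countable. Because $A$ is countably infinite, there is an injection $\iota:P\to A$. Using it, I identify $P$ with a subset of $A$ and transport the order; this does not change the isomorphism type.

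Define $e(x)=\downarrow x$. Past-finiteness says exactly that each $\downarrow x$ is finite, so $e(x)\in\Pfin(A)$. If $x\leq y$, then every $z\leq x$ satisfies $z\leq y$ by transitivity, so $\downarrow x\subseteq\downarrow y$. Conversely, if $\downarrow x\subseteq\downarrow y$, then reflexivity gives $x\in\downarrow x\subseteq\downarrow y$, hence $x\leq y$. Thus $x\leq y$ if and only if $e(x)\subseteq e(y)$, and $e$ is an embedding. (It is injective automatically, by antisymmetry.) Hence every past-finite order embeds into $(\Pfin(A),\subseteq)$.

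\emph{Part (ii).} Let $(P,\leq)$ be future-finite. Its dual $(P,\geq)$ is past-finite, since the down-sets of $\geq$ are the up-sets of $\leq$. By (i) there is an embedding $e:(P,\geq)\to(\Pfin(A),\subseteq)$; concretely, $e(x)=\uparrow x$. Reversing both sides, $x\leq y$ if and only if $y\geq x$, if and only if $e(y)\subseteq e(x)$, if and only if $e(x)\supseteq e(y)$. So $e$ embeds $(P,\leq)$ into $(\Pfin(A),\supseteq)$.

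There is no real obstacle here. The only points needing care are the countability assumption, which makes the injection into $A$ possible, and the direction bookkeeping in the duality step.
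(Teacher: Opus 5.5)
Your proof is correct and follows the paper's argument: the paper also assumes $P\subseteq A$, takes the down-set map $e(x)=\downarrow x$ as the embedding of a past-finite order into $(\Pfin(A),\subseteq)$, and obtains (ii) by reversing the order. Your verification of both directions of the embedding and the explicit form $e(x)=\uparrow x$ in (ii) fill in details the paper leaves to the reader.
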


\paragraph{2.}
For a given partial order $(Q,\leq)$ we construct the {\em subset order}, $(\Pfin(Q),\subsetLeq{Q})$, where 
$$X\subsetLeq{Q} Y\iff \hbox{ for every }x\in X \hbox{ there exists }y\in Y \hbox{ such that }x\leq y.$$
In such case we say that $X$ {\em is dominated by} $Y$.

Observe that for any order $(Q,\leq)$ the relation $\subsetLeq{Q}$ is transitive and reflexive, i.e. it yields a quasi-order on $\Pfin(Q)$.
We choose a unique representative for every equivalence class to get a partial order. In particular, we may represent $X\subseteq Q$ 
by the antichain of the maximum elements of $X$.

The subset orders can be layered to get a universal order.
\begin{thm}[Universality of the subset order]

\label{thm:univ}
For every future-finite-universal partial order $(F,\leq_F)$ it holds that $(\Pfin(F),\subsetLeq{F})$ is universal.
\end{thm}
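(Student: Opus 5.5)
The plan is a two-step reduction. First, the statement only needs to be proved for one concrete future-finite order $(Q,\leq_Q)$. Suppose $(Q,\leq_Q)$ embeds into $(F,\leq_F)$ via $\varphi$. Then the map $X\mapsto\varphi(X)$ is an embedding of $(\Pfin(Q),\subsetLeq{Q})$ into $(\Pfin(F),\subsetLeq{F})$, because domination is defined purely in terms of comparabilities between elements, and $\varphi$ preserves and reflects those. This compatibility holds on the quasi-orders and therefore also on the chosen representatives (antichains of maxima). So it suffices to fix a countable partial order $(P,\leq)$, enumerate it as $P=\{p_0,p_1,\dots\}$, and build a future-finite $(Q,\leq_Q)$ together with a map $e\colon P\to\Pfin(Q)$ satisfying
$$p_k\leq p_l\iff e(p_k)\subsetLeq{Q} e(p_l).$$
The goal of the construction is that each individual element of $Q$ has only finitely many elements above it. The order $P$ itself may be far from future-finite, so this finiteness has to come from splitting each $p_k$ into finitely many pieces.

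For the concrete construction I would take $Q=\{(i,k)\in\mathbb N^2: i\leq k\}$ and set
$$e(p_k)=\{(i,k): i\leq k\}.$$
I would then define $\leq_Q$ so that every comparability points from a larger second index to a smaller one. That is, $(i,k)<_Q(j,l)$ should force $l<k$. This immediately gives future-finiteness, since above $(i,k)$ there are only pairs with second coordinate $<k$. The first coordinate $i$ plays the role of a test element $p_i$. I would declare $(i,k)\leq_Q(j,l)$ when $k\geq l$, $j=\min(i,l)$, and $p_k\leq p_l$. Transitivity should hold: the condition $p_k\leq p_l$ composes, and the clipping of the first coordinate by $\min(\cdot,l)$ composes when the second coordinates decrease. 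I would check this first.

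The key verification is the equivalence above, and it splits by the relative position of $k$ and $l$. If $p_k\leq p_l$ and $l\leq k$, then each $(i,k)\in e(p_k)$ is dominated by $(\min(i,l),l)\in e(p_l)$. If $p_k\leq p_l$ but $l>k$, comparisons can only go downward in the second index, so $e(p_k)$ cannot be dominated pointwise by $e(p_l)$ through second coordinates alone. This case is the main obstacle. The fix I would try first is to enlarge the representatives: let $e(p_l)$ also contain, for every $k<l$ with $p_k\leq p_l$, a copy of the pieces $(i,k)$. Equivalently, replace $e(p_l)$ by $\bigcup\{\{(i,k):i\leq k\}: k\leq l,\ p_k\leq p_l\}$. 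This is still a finite set, and it makes the case $l>k$ trivial by inclusion.

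The converse direction then has to be rechecked against this enlarged $e$. Suppose $p_k\not\leq p_l$. I would show that the piece $(k,k)$, which belongs to $e(p_k)$, is not below any element of $e(p_l)$. The elements of $e(p_l)$ are pairs $(j,m)$ with $m\leq l$ and $p_m\leq p_l$. If $(k,k)\leq_Q(j,m)$, then $p_k\leq p_m\leq p_l$, which is a contradiction. I expect the delicate part to be choosing $\leq_Q$ so that transitivity and future-finiteness survive while the diagonal pieces $(k,k)$ still witness every non-comparability. If the $\min$-clipping rule fails transitivity, I would fall back to the simpler rule $(i,k)\leq_Q(j,l)$ iff $(i,k)=(j,l)$ or ($l<k$ and $p_k\leq p_l$), discarding the first coordinate from the order altogether.
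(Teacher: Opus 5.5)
Your proposal is correct and is essentially the paper's proof. With the fallback rule, your $(Q,\leq_Q)$ is the paper's backward order $\leq_b$ (the first coordinate $i$ only adds redundant copies and can be dropped), your enlarged $e(p_l)$ is the paper's $g(p_l)=\{e(y)\mid y\leq_f p_l\}$, and composing with an embedding $Q\to F$ matches the paper embedding $(P,\leq_b)$ into $F$ directly.

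You left one case implicit: $p_k\leq p_l$ with $l\leq k$ for the enlarged sets. It goes through as before. A piece $(i,m)\in e(p_k)$ with $m\leq l$ already lies in $e(p_l)$, since $p_m\leq p_k\leq p_l$. A piece with $m>l$ lies below $(l,l)\in e(p_l)$ under the fallback rule, or below $(\min(i,l),l)$ under the clipping rule, which is in fact transitive as well.
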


\begin{proof}
Let any countable partial order $(P,\leq_P)$ be given. Without loss of generality we may assume that $P\subseteq \mathbb{N}$. This
way we enforce a linear order $\leq$ on the elements of $P$. The order $\leq$ is unrelated to the partial order $\leq_P$.

We decompose $(P,\leq_P)$ into two parts: 
\begin{enumerate}
\item The \emph{forward order} $\leq_f$, where $x\leq_f y$ if and only if $x\leq_P y$ and $x \leq y$, 
\item the \emph{backward order} $\leq_b$, where $x\leq_b y$ if and only if $x\leq_P y$ and $x \geq y$.
\end{enumerate}

For every $x\in P$ both sets $\{y\mid y\leq_f x\}$ and $\{y\mid x\leq_b y\}$ are finite.
In other words $(P,\leq_f)$ is past-finite and $(P,\leq_b)$ is future-finite.

Since $(F,\leq_F)$ is future-finite-universal, 
there is an embedding $e: (P,\mathop{\leq_b}) \to (F,\leq_F)$. 
For every $x\in P$ we now define: 
$$g(x)=\{e(y)\mid y\leq_f x\}.$$
We argue that $g$ is an embedding of $(P,\leq_P)$ into $(\Pfin(F),\subsetLeq{F})$.

First we show that $g(x)\subsetLeq{F} g(y)$ implies $x\leq_P y$.
From the definition of $\subsetLeq{F}$ follows that there is
$w\in P$, $e(w)\in g(y)$, such that $e(x)\leq_F e(w)$. By the definition of $g$, $e(w)\in g(y)$ if and only if $w\leq_f y$. 
By the definition of $e$, $e(x)\leq_F e(w)$ if and only if $x\leq_b w$.
Together we get that $x\leq_b w\leq_f y$, which implies $x\leq_P w\leq_P y$ and in particular $x\leq_P y$.

To show that $x\leq_P y$ implies $g(x)\subsetLeq{F} g(y)$ we consider two cases.
\begin{enumerate}
\item When $x\leq y$ then $g(x)\subseteq g(y)$ and thus also $g(x)\subsetLeq{F} g(y)$.
\item Now assume that $x>y$. For every $w\in P$, such that $e(w)\in g(x)$ it holds that $w\leq_f x\leq_P y$ by the construction of $g(x)$.
Now either $w\leq y$, which implies that $e(w)\in g(y)$, or $w\geq y$ which implies $w\leq_b y$ and consequently  $e(w)\leq_F e(y)$. 
As some case applies on each $e(w)\in g(x)$, it follows that $g(x)$ is dominated by $g(y)$.
\end{enumerate}
\end{proof}

The same construction is illustrated in Figure~\ref{fig:sampleposet}.

\begin{figure}[t]
\centering
\includegraphics[width=0.88\textwidth]{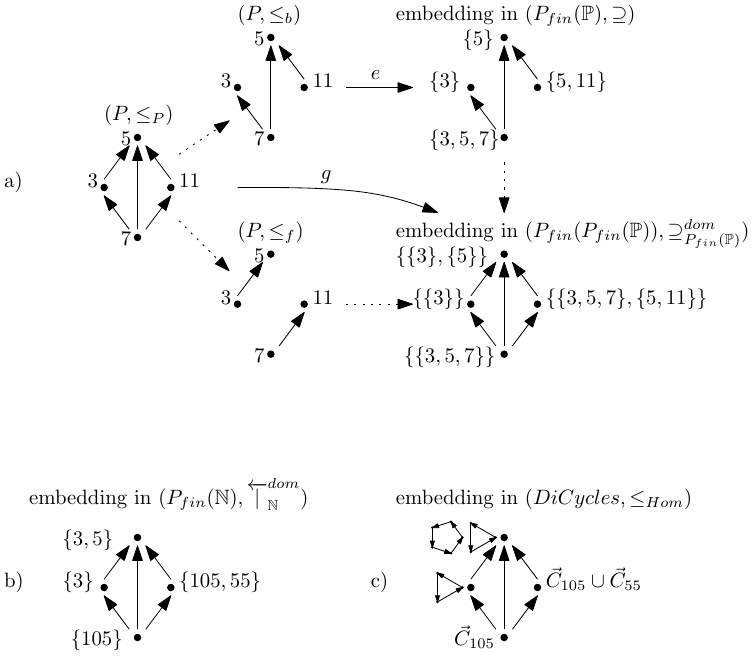}
\caption{Three representations of the same finite partial order: by Theorem~\ref{thm:univ}, by the subset divisibility order, and by homomorphisms of disjoint unions of directed cycles.}
\label{fig:sampleposet}
\end{figure}

The embedding $g$ constructed in the proof of Theorem \ref{thm:univ} has the 
property that $g(x)$ depends only on elements $y<x$. Such embeddings are known 
as {\em on-line embeddings} because they can be constructed inductively without 
a priori knowledge of the whole partial order~\cite{Hubicka2011}. All embeddings 
shown later in our paper are on-line as well.

This construction goes back to Hedrl\'in~\cite{Hedrlin1969} and was later used for set systems, oriented paths and other homomorphism orders~\cite{Nesetril2000,Hubicka2004,Hubicka2011}.  We use the layered formulation in terms of future-finite-universal and past-finite-universal orders; a less constructive existence proof for past-finite-universal structures appears in~\cite{Droste2005}.

We apply Theorem~\ref{thm:univ} on a particular order derived from the divisibility order. 
We write the divisibility relation $a|b$, meaning "$a$ divides $b$", in the reverse order as $b \overleftarrow| a$, 
meaning "$b$ is divided by $a$".

By Corollary~\ref{cor:pastfuturefiniteuniv} we see that a particular example of a past-finite-universal  is $(\Pfin(\mathbb{P}),\subseteq)$, where $\mathbb{P}$ is the class of all odd prime numbers, and hence $(\Pfin(\mathbb{P}),\supseteq)$ is future-finite-universal.

Since, for $X,Y\in \Pfin(\mathbb{P})$, $X \subseteq Y$ if and only if $\prod X | \prod Y$, we immediately obtain special embeddings of the subset orders by divisibility (see Figure~\ref{fig:sampleposet} b) as: 

\begin{prop}\label{prop:setdiviuniv}
\begin{itemize}
\item[a)] The order $(\mathbb{N},|)$ is past-finite-universal,
\item[b)] the order $(\mathbb{N},\overleftarrow|)$ is future-finite-universal,
\item[c)] the \emph{subset divisibility order} $(\Pfin({\mathbb N}),\overleftarrow|^{dom}_{\mathbb N})$ is universal.
\end{itemize}
\end{prop}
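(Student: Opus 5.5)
The plan is to transfer everything through the map $\varphi\colon \Pfin(\mathbb{P})\to\mathbb{N}$, $\varphi(X)=\prod X$, where $\mathbb{P}$ is the set of odd primes and $\prod\emptyset=1$. By uniqueness of prime factorization, $\varphi$ is injective. For $X,Y\in\Pfin(\mathbb{P})$ we have $X\subseteq Y$ if and only if $\prod X\mid\prod Y$, since the products are squarefree. Hence $\varphi$ is an order embedding of $(\Pfin(\mathbb{P}),\subseteq)$ into $(\mathbb{N},|)$. For the same reason it is also an embedding of $(\Pfin(\mathbb{P}),\supseteq)$ into $(\mathbb{N},\overleftarrow|)$, because $\overleftarrow|$ is just $|$ read backwards.

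For a), Corollary~\ref{cor:pastfuturefiniteuniv}(i) applies, since $\mathbb{P}$ is countably infinite. So every past-finite order embeds into $(\Pfin(\mathbb{P}),\subseteq)$, and composing with $\varphi$ embeds it into $(\mathbb{N},|)$. Part b) follows in the same way from Corollary~\ref{cor:pastfuturefiniteuniv}(ii). Alternatively, b) follows from a) by reversing the order: a future-finite order is past-finite after reversal, and embeddings commute with reversal.

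For c), I will apply Theorem~\ref{thm:univ} with $(F,\leq_F)=(\mathbb{N},\overleftarrow|)$, which is future-finite-universal by b). The theorem then says that $(\Pfin(\mathbb{N}),\subsetLeq{F})$ is universal. Unwinding the definition, $X\subsetLeq{F}Y$ means that for every $x\in X$ there is $y\in Y$ with $x\overleftarrow| y$, i.e.\ $y\mid x$. This is exactly the relation written $\overleftarrow|^{dom}_{\mathbb N}$.

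There is one subtlety, which is the closest thing to an obstacle. The dominance relation is only a quasi-order, so the statement refers to the partial order on representatives chosen as in the paragraph preceding Theorem~\ref{thm:univ}. Theorem~\ref{thm:univ} is formulated for this same representative order, so no additional argument is needed beyond checking that the conventions match.
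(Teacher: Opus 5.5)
Your proposal is correct and follows the paper's route. The squarefree-product map $X\mapsto\prod X$ on $\Pfin(\mathbb{P})$ carries Corollary~\ref{cor:pastfuturefiniteuniv} to parts a) and b), and part c) is Theorem~\ref{thm:univ} applied to the future-finite-universal order $(\mathbb{N},\overleftarrow|)$, with $\subsetLeq{F}$ unwinding to the subset divisibility order exactly as you describe.
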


\subsection{A universality argument for homomorphism orders}

An {\em oriented path} $P$ is a orientation of an undirected path, formally $V_P=\{v_0,v_1,\ldots,v_n\}$ and for every $i=1,2,\ldots,n$ either $(v_{i-1},v_i)\in E_P$ or $(v_i,v_{i-1})\in E_P$, but not both, and there are no other edges. 
We denote by $\Path$ the class of all finite oriented paths.

Let further $C_k$ stand for the unoriented cycle on $k$ vertices; $\Cycle$ for the class of graphs formed by all $C_k$, $k\geq 3$;
and $\Cycles$ for the class of graphs formed by disjoint union of finitely many graphs in $\Cycle$.

Analogously, $\overrightarrow{C}_k$ is the directed cycle on $k$ vertices with edges oriented in the same direction;  
$\DiCycle$ is the class of directed graphs formed by all $\overrightarrow{C}_k$, $k\geq 3$; and $\DiCycles$ is the class of directed graphs formed by disjoint union of finitely many graphs in $\DiCycle$.

It is a classical result that the partial order $(\Graphs,\leq)$ is a universal partial order. In fact, graph homomorphisms are universal also in the categorical sense \cite{pultr1980combinatorial}.  It has recently been shown that the universality of the homomorphism order is a relatively persistent property, and that the homomorphism order remains universal even for very restricted classes of (directed) graphs. 
Among others:

\begin{thm}[Hubi\v{c}ka, Ne\v{s}et\v{r}il \cite{Hubicka2004,Hubicka2011}]
\label{thm:paths}
The partial order $(\Path,\leq)$ is universal.
\end{thm}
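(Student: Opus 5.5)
The plan is to follow the two-layer scheme of Theorem~\ref{thm:univ}, realized inside oriented paths. By Proposition~\ref{prop:setdiviuniv}(c), it suffices to embed the subset divisibility order $(\Pfin(\mathbb N),\overleftarrow|^{dom}_{\mathbb N})$ into $(\Path,\leq)$. This splits into two steps.

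\textbf{Step 1: embed $(\mathbb{N},\overleftarrow|)$ into homomorphisms of directed cycles.} For directed cycles we have $\overrightarrow{C}_a\to\overrightarrow{C}_b$ if and only if $b\mid a$. Hence $(\DiCycle,\leq)$ contains $(\mathbb N,\overleftarrow|)$ restricted to $n\geq 3$. Restricting further to odd numbers loses nothing, since the odd primes already give a future-finite-universal order.

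\textbf{Step 2: pass from sets to disjoint unions.} A finite set $X$ is sent to the disjoint union $\bigcup_{a\in X}\overrightarrow{C}_a$. A union maps to another union if and only if each component maps to some component, because homomorphic images of connected graphs stay in one component. This is exactly domination. So $(\DiCycles,\leq)$ is universal, which is the picture in Figure~\ref{fig:sampleposet}(c).

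\textbf{Step 3: replace the cycles and unions by oriented paths.} Each $\overrightarrow{C}_a$ is encoded as an oriented path $P_a$ that winds around a cycle of length $a$ and is rigidified by gadgets. Concretely, I would choose a fixed core oriented path $Q$ (a "zig-zag" with distinguished start and end levels). $P_a$ is then built by concatenating $a$ copies of a basic segment, separated by marker blocks, with the first and last markers made distinctive. The goal is the equivalence $P_a\to P_b$ iff $b\mid a$. The forward direction comes from folding: the path wraps around $b$ segments repeatedly, and it closes up correctly because $a$ is a multiple of $b$.

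Disjoint unions are not paths, so they must be simulated by concatenation. I would form $P_X$ by joining the paths $P_a$, $a\in X$, through "connector" paths. The connectors must admit homomorphisms onto any $P_b$ block and must not themselves force comparisons, e.g.\ connectors of small height that map into the marker region of any code. The condition to check is $P_X\to P_Y$ iff every $P_a$ with $a\in X$ maps into some $P_b$ with $b\in Y$. The backward direction uses height and level arguments: homomorphisms between oriented paths preserve the algebraic length (the net number of forward minus backward edges). Choosing the markers of maximal height then forces each code block to land inside a single block of the target.

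\textbf{Main obstacle.} The hardest step is rigidity: showing that a homomorphism from $P_a$ into the concatenated target cannot "straddle" two blocks or connectors, and cannot fold in an unintended way that would yield $P_a\to P_b$ without $b\mid a$. My first attempt would use level (height) functions, which every homomorphism between balanced oriented paths must preserve up to a shift. The markers would be made the unique places attaining extreme heights, pinning down the image of each marker. Counting markers modulo $b$ would then give divisibility. If this proves too delicate, the fallback is to cite the original construction of Hubi\v{c}ka and Ne\v{s}et\v{r}il~\cite{Hubicka2004,Hubicka2011}, which carries out exactly such an encoding.
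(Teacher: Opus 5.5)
Your Steps 1 and 2 only re-derive Theorem~\ref{thm:cycles}. Everything the statement asserts lies in Step 3, which you leave as a plan with a fallback to citing Hubi\v{c}ka and Ne\v{s}et\v{r}il. That citation is all the paper does; it also remarks that path homomorphisms need ``flipping'' rather than the ``rolling'' behind Theorem~\ref{thm:cycles}. Both mechanisms of your Step 3 fail as described.

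\textbf{The codes $P_a$.} A homomorphism of oriented paths maps a path onto a walk in the target path, so $P_{mb}$ cannot wind around $P_b$. The map you intend must zigzag, crossing every interior segment and marker of $P_b$ backward. Backward crossing already permits stalling:
\begin{itemize}
\item Suppose a piece $U=u_0\dots u_n$ occurs in the target as $q_s\dots q_{s+n}$, and $\psi$ maps it with $\psi(u_0)=q_{s+n}$ and $\psi(u_n)=q_s$.
\item Levels are preserved up to a shift, so $U$ has net level change $0$ and $n$ is even.
\item Write $\psi(u_j)=q_{g(j)}$. Then $g(j)-s-j$ runs from $n$ to $-n$ in steps $0$ or $-2$, so $\psi(u_j)=q_{s+j}$ for some $j$.
\item Splice $u_k\mapsto q_{s+k}$ for $k\le j$ with $\psi$ for $k\ge j$. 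This maps $U$ onto a closed walk at $q_s$. The reverse splice gives a closed walk at $q_{s+n}$.
\end{itemize}
Apply this to an interior marker of $P_b$ and the segment preceding it. You can then insert one extra period into the identity map $P_b\to P_b$ at their common vertex. Hence $P_{mb}\to P_b$ for a single $m\ge 2$ forces $P_{b+s}\to P_b$ for all $s\ge 0$, for example $P_5\to P_3$. Markers of extreme height only fix the alignment and cannot forbid these excursions. Counting markers modulo $b$ therefore gives at best comparison by length, never divisibility.

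\textbf{The concatenation $P_X$.} Even if the codes behaved as intended, this fails in the direction ``$X$ dominated by $Y$ implies $P_X\to P_Y$''.
\begin{itemize}
\item The connector between two consecutive blocks of $P_X$ must map onto a walk in $P_Y$ joining their images. That walk visits every vertex of $P_Y$ in between.
\item A homomorphism of oriented paths shifts all levels by one constant. So a connector whose height is smaller than that of a code block, as your ``small height'' connectors are, cannot cross that block.
\item Consequently, consecutive blocks of $P_X$ must land in the same or adjacent blocks of $P_Y$.
\end{itemize}
Take $X=\{3,7\}$ and $Y=\{3,5,7\}$ with blocks in increasing order. Here $X$ is dominated by $Y$, but $P_3$ must go to $P_3$ and $P_7$ to $P_7$, and the connector cannot pass $P_5$, so $P_X\not\to P_Y$. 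A fixed connector also cannot cross code blocks of unbounded length. You give no replacement that crosses intermediate codes without creating new comparisons. Simulating disjoint unions inside a single path is precisely the difficult part of the theorem, and the proposal does not supply it.
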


Consequently the homomorphism order is universal in many naturally defined classes of (directed) graphs, such as 3-colorable graphs, planar graphs, series-parallel graphs, etc. As a pleasant surprise, we get an elementary argument for a particular class of digraphs:

\begin{thm}[Fiala, Hubi\v{c}ka, Ne\v{s}et\v{r}il \cite{Fialab}]
\label{thm:cycles}
The partial order $(\DiCycles,\leq)$ is universal.
\end{thm}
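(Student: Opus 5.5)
The plan is to embed the universal subset divisibility order of Proposition~\ref{prop:setdiviuniv}(c) into $(\DiCycles,\leq)$. The basic fact we need is elementary: $\overrightarrow{C}_k\Hom\overrightarrow{C}_l$ if and only if $l\mid k$. Indeed, a homomorphism must send the directed cycle $v_0v_1\cdots v_{k-1}$ onto a closed directed walk of length $k$ in $\overrightarrow{C}_l$. Such a walk exists exactly when $l$ divides $k$, and in that case $v_i\mapsto i \bmod l$ works. Thus on $\DiCycle$ the homomorphism order is precisely $(\mathbb{N},\overleftarrow|)$, restricted to lengths at least $3$.

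The next step handles disjoint unions. Since each $\overrightarrow{C}_k$ is connected, a homomorphism from $\sum_{k\in X}\overrightarrow{C}_k$ to $\sum_{l\in Y}\overrightarrow{C}_l$ is the same as a choice, for each $k\in X$, of some $l\in Y$ with $\overrightarrow{C}_k\Hom\overrightarrow{C}_l$, that is, with $l\mid k$. Hence
$$\sum_{k\in X}\overrightarrow{C}_k\Hom\sum_{l\in Y}\overrightarrow{C}_l \iff X\ \overleftarrow|^{dom}_{\mathbb N}\ Y.$$
The map $X\mapsto\sum_{k\in X}\overrightarrow{C}_k$ is therefore order-preserving and order-reflecting from the subset divisibility order, as a quasi-order, into $(\DiCycles,\Hom)$. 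It then descends to an embedding of the partial orders after we pass to cores on one side and to antichain representatives on the other.

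The main point to check is that Proposition~\ref{prop:setdiviuniv}(c) can be realized using only integers at least $3$, because $\DiCycle$ excludes $\overrightarrow{C}_1$ and $\overrightarrow{C}_2$. The construction behind that proposition maps sets of odd primes to their products, so every number used is an odd integer at least $3$. The one exception is the empty product $1$, which could arise from the empty set. To avoid it, I would either shift the encoding by multiplying every product by one fixed extra odd prime that is not used elsewhere, or simply observe that the future-finite-universal part of Corollary~\ref{cor:pastfuturefiniteuniv} can be applied to nonempty sets. Multiplying by a common factor $p$ preserves divisibility in both directions, so the first option is the cleaner one.

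Composing the embedding from Theorem~\ref{thm:univ} with this encoding and with the cycle-union map above then embeds every countable partial order into $(\DiCycles,\leq)$, which proves universality.
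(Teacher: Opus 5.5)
Your proposal is correct and is essentially the paper's proof: the paper also combines $\overrightarrow{C}_k\to\overrightarrow{C}_l \iff l\mid k$ with Proposition~\ref{prop:setdiviuniv}(c). You additionally spell out the componentwise argument for disjoint unions and check that only cycle lengths of at least $3$ occur. The empty-product worry does not actually arise in the construction via Corollary~\ref{cor:pastfuturefiniteuniv} and Theorem~\ref{thm:univ}, since the sets $\uparrow x$ and $g(x)$ used there are always nonempty, though your extra-prime fix is harmless.
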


\begin{proof}
As $\overrightarrow{C}_k\to \overrightarrow{C}_l$ if and only if $k \overleftarrow| l$, we get the conclusion directly from 
Proposition~\ref{prop:setdiviuniv}. See also Figure~\ref{fig:sampleposet} c).
\end{proof}

Note that our universality argument for $(\DiCycles,\leq)$ uses a ``rolling'' principle instead of ``flipping'' used before by others. In addition, the ``rolling'' approach can also be applied in the context of locally constrained homomorphisms, while these homomorphisms do not allow any sort of ``flipping'' operation necessary for non-trivial path homomorphisms.

Theorem~\ref{thm:cycles} has similar applications as Theorem \ref{thm:paths}~\cite{Nesetril2006,Lehtonen2008,Lehtonen2010,Kwuida2011,Samal2013} as well as new applications like the universality on homomorphisms of line graphs~\cite{Fialab}.

\subsection{Gaps and dualities in one-sided finite orders}
\label{sec:pastfinite}

The finiteness of down-sets or up-sets is a characteristic property of many constrained homomorphism orders. 
We now make some elementary observations about past-finite and future-finite partial orders.  
Both kinds of orders are also locally-finite~\cite{Britz2001,Droste2005}, where 
{\em locally-finiteness} of $(P,\leq_P)$ means that for every $x\leq_P y$, the interval $x\leq_P w\leq_P y$ 
is finite. 

The following claims are straightforward:

\begin{prop}[Gaps in locally-finite orders]
\label{prop:futurefinitegraps}
No locally-finite partial order is dense.
Moreover, for every $u\in P$, that is not maximal in $(P,\leq)$, there is an element $v\in P$ such that $(u,v)$ is a gap.
Analogously, for any non-minimal $u$, there exists $v$ such that $(v,u)$ is a gap.
\end{prop}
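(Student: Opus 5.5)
Fix a locally-finite partial order $(P,\leq)$. The plan is to prove the two existence claims first and then read off non-density from them.

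Let $u\in P$ be non-maximal, so there is some $w\in P$ with $u<w$. By local finiteness, the interval $I=\{z\in P \mid u\leq z\leq w\}$ is finite. It is also nonempty, since $w\in I$.

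Consider the set $I\setminus\{u\}$. It contains $w$, so it is a nonempty finite subset of a partial order and therefore has a minimal element $v$. Then $u<v$. Suppose there were some $z$ with $u<z<v$. We would get $z\leq v\leq w$, so $z\in I\setminus\{u\}$, and $z<v$ would contradict the minimality of $v$. Hence no such $z$ exists, and $(u,v)$ is a gap.

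The statement for non-minimal $u$ is the order dual. Pick $w<u$, take the finite interval $\{z \mid w\leq z\leq u\}$, and let $v$ be a maximal element of it after removing $u$. The same argument with the inequalities reversed shows that $(v,u)$ is a gap. Alternatively, one can simply observe that the reverse of a locally-finite order is again locally-finite and apply the first claim to it.

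For non-density, a dense order must contain some pair $u<w$, since otherwise it has no intervals to speak of; we tacitly read density as the absence of gaps in an order that has at least one strict comparison. If $P$ contains a pair $u<w$, then $u$ is not maximal, so by the above $P$ has a gap and is not dense. If $P$ is an antichain, it has no pairs $u<w$ at all; the claim is then either vacuous or excluded by the paper's convention, and I would note this degenerate case explicitly.

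The only real subtlety is the existence of a minimal element. This is exactly where local finiteness is used, since without finiteness of the interval a minimal element above $u$ in $I$ need not exist. I do not expect any other obstacle.
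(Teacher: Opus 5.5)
Your proof is correct and is exactly the straightforward argument the paper has in mind; the paper states this proposition without proof, calling the claims straightforward. Your caveat is also right rather than a gap: under the paper's definition of density, an antichain (for instance the one-element order) is locally finite and has no gaps, so it is dense, and the first sentence of the statement holds only for orders with at least one strict comparison $u<w$.
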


\paragraph{Dualities in past-finite and future-finite partial orders}
The concept of duality was studied for some past-finite and future-finite partial orders, see \cite{Hell2013,Ball2010,Feder2008,Xie2006}. 

A generalized finite duality pair $(\mathcal{F},\mathcal{D})$ in $(P,\leq_P)$ is exactly a finite partition of the order into principal up-sets generated by $\mathcal{F}$ and principal down-sets generated by $\mathcal{D}$:
$$\bigcup_{f\in \mathcal{F}}\uparrow f=P\setminus \bigcup_{d\in\mathcal{D}}\downarrow d.$$
For graph homomorphisms, such a pair is a splitting antichain in the order~\cite{Ahlswede1995,Foniok2008}.

When $(P,\leq)$ is future-finite, then for any finite $\mathcal{F}$, the union $\bigcup_{f\in \mathcal{F}}\uparrow f$ 
is always finite. Such $\mathcal{F}$ may be accompanied by $\mathcal{D}$ to form a duality pair $(\mathcal{F},\mathcal{D})$ if and only if 
the above equation is satisfied. Necessarily, $P\setminus \bigcup_{d\in\mathcal{D}}\downarrow d$ is finite, 
which itself happens rather rarely. 

We thus ask for a characterization of sets $\mathcal{F}$ that form a generalized finite duality pair $(\mathcal{F},\mathcal{D})$ for some $\mathcal{D}$. We show that the existence of finite duality pairs is closely related to the existence of gaps. 
Though an analogous relationship holds for the homomorphism order too, it is of different nature here.

\begin{prop}[Dualities in future-finite orders]
\label{prop:futurefiniteduals}
Let $(P,\leq)$ be a future-finite partial order and let $\mathcal{F}\subseteq P$ be finite.  Put
\[
X=\bigcup_{f\in \mathcal{F}}\uparrow f .
\]
There is a finite set $\mathcal{D}\subseteq P$ such that
\[
X=P\setminus \bigcup_{d\in\mathcal{D}}\downarrow d
\]
if and only if the down-set $P\setminus X$ has only finitely many maximal elements.  In that case one may take
\[
\mathcal{D}=\max(P\setminus X),
\]
and this is the unique irredundant antichain of dual elements.
\end{prop}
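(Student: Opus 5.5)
The plan is to prove both directions directly from the definitions, using future-finiteness only to guarantee that the down-set $P\setminus X$ is generated by its maximal elements. First I record that $X$ is an up-set, being a union of principal up-sets, so $Y:=P\setminus X$ is a down-set. The required identity $X=P\setminus\bigcup_{d\in\mathcal{D}}\downarrow d$ is then equivalent to $Y=\bigcup_{d\in\mathcal{D}}\downarrow d$. In words, $\mathcal{D}$ must be a finite subset of $Y$ whose principal down-sets cover $Y$.

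For the forward direction, suppose such a finite $\mathcal{D}$ exists; then $\mathcal{D}\subseteq Y$. Every maximal element $m$ of $Y$ lies in $\downarrow d$ for some $d\in\mathcal{D}\subseteq Y$, and maximality forces $m=d$. Hence $\max(Y)\subseteq\mathcal{D}$ is finite. The same argument shows that any valid $\mathcal{D}$ contains $\max(Y)$, which will give the uniqueness statement.

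For the backward direction, I will show that every $y\in Y$ lies below some maximal element of $Y$. This is the only place future-finiteness enters. The set $\uparrow y\cap Y$ is finite because $\uparrow y$ is finite, and it is nonempty since it contains $y$. A finite nonempty poset has a maximal element $m$. Any $z\in Y$ with $z\geq m$ also satisfies $z\geq y$, so $z\in\uparrow y\cap Y$ and hence $z=m$. Thus $m\in\max(Y)$. Consequently $Y=\bigcup_{d\in\max(Y)}\downarrow d$, and $\mathcal{D}=\max(Y)$ works whenever $\max(Y)$ is finite.

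For uniqueness, let $\mathcal{D}$ be an irredundant valid set. By the forward direction, $\max(Y)\subseteq\mathcal{D}$. By the backward direction, $\max(Y)$ alone already covers $Y$. So any $d\in\mathcal{D}\setminus\max(Y)$ would be redundant, and therefore $\mathcal{D}=\max(Y)$. This set is an antichain because it consists of maximal elements.

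I expect no real obstacle. The one point needing care is the existence of maximal elements above each $y\in Y$, which can fail in general posets and is exactly what future-finiteness secures. I will also note the degenerate case $Y=\emptyset$, where $\mathcal{D}=\emptyset$.
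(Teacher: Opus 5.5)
Your proposal is correct and follows essentially the same route as the paper. Both proofs use future-finiteness to put every element of the down-set $P\setminus X$ below a maximal element (taking a maximal element of the finite set $\uparrow y\cap(P\setminus X)$), and both show $\max(P\setminus X)\subseteq\mathcal{D}$ for any valid $\mathcal{D}$. You make explicit one step the paper leaves implicit: a maximal element of $\uparrow y\cap(P\setminus X)$ is maximal in all of $P\setminus X$.
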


\begin{figure}
\centerline{\includegraphics{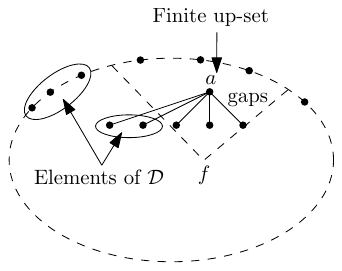}}
\caption{Dualities in a future-finite partial order.}
\label{fig:finiteduality}
\end{figure}

The situation is schematically depicted in Figure~\ref{fig:finiteduality}.
\begin{proof}
The set $X$ is an up-set, and hence $B=P\setminus X$ is a down-set.  Since the order is future-finite, every element of $B$ lies below a maximal element of $B$: indeed, one can choose a maximal element of the finite set $B\cap\uparrow b$.

If $\max B$ is finite, then $B=\bigcup_{d\in\max B}\downarrow d$, and therefore the displayed duality equation holds with $\mathcal{D}=\max B$.

Conversely, suppose that $B=\bigcup_{d\in\mathcal{D}}\downarrow d$ for a finite set $\mathcal{D}$.  No element of $\mathcal{D}$ lies in $X$, since otherwise that element would belong to both sides of the partition.  Hence $\mathcal{D}\subseteq B$.  If $m$ is maximal in $B$, then $m\leq d$ for some $d\in\mathcal{D}\subseteq B$, and maximality gives $m=d$.  Thus $\max B\subseteq\mathcal{D}$, so $\max B$ is finite.  The equality $B=\bigcup_{m\in\max B}\downarrow m$ also shows that any irredundant antichain of dual elements must be exactly $\max B$.
\end{proof}

Equivalently, the dual elements form the upper boundary of the complement of $X$.  A maximal element of $P\setminus X$ is either maximal in $P$ itself or the lower endpoint of a gap whose upper endpoint belongs to $X$.  This gap interpretation is the form used in the applications below.

We use the above proposition also for past-finite orders, which rephrased reads as follows.

\begin{corollary}[Dualities in past-finite orders]
\label{cor:pastfiniteduals}
Let $(P,\leq)$ be past-finite and let $\mathcal{D}\subseteq P$ be finite.  Put
\[
Y=\bigcup_{d\in \mathcal{D}}\downarrow d .
\]
There is a finite set $\mathcal{F}\subseteq P$ such that
\[
\bigcup_{f\in\mathcal{F}}\uparrow f=P\setminus Y
\]
if and only if $P\setminus Y$ has only finitely many minimal elements.  In that case one may take
\[
\mathcal{F}=\min(P\setminus Y),
\]
and this is the unique irredundant antichain of left-hand obstructions.  A minimal element of $P\setminus Y$ is either minimal in $P$ or the upper endpoint of a gap whose lower endpoint belongs to $Y$.
\end{corollary}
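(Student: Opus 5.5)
The plan is to derive Corollary~\ref{cor:pastfiniteduals} from Proposition~\ref{prop:futurefiniteduals} by order duality. Let $(P,\leq)^{\mathrm{op}}$ be the reversed order $(P,\geq)$. If $(P,\leq)$ is past-finite, then $(P,\geq)$ is future-finite, because the up-set of $x$ in the reversed order is exactly the down-set $\downarrow x$ in the original order. Reversal also exchanges principal up-sets with principal down-sets, and maximal elements with minimal elements.

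I apply Proposition~\ref{prop:futurefiniteduals} to $(P,\geq)$, with the finite set $\mathcal{D}$ in the role of $\mathcal{F}$. The set $X$ computed in $(P,\geq)$ is then $\bigcup_{d\in\mathcal{D}}\downarrow d=Y$. The proposition says that a finite $\mathcal{F}\subseteq P$ satisfies
\[
Y=P\setminus\bigcup_{f\in\mathcal{F}}\uparrow f
\]
(the down-sets of $(P,\geq)$ being the up-sets of $(P,\leq)$) if and only if $P\setminus Y$ has finitely many maximal elements with respect to $\geq$, that is, finitely many minimal elements with respect to $\leq$. Taking complements, the displayed equation is equivalent to $\bigcup_{f\in\mathcal{F}}\uparrow f=P\setminus Y$, which is the statement we want. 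The same proposition also gives that one may take $\mathcal{F}=\min(P\setminus Y)$, and that this is the unique irredundant antichain of left-hand obstructions.

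The only point needing care is the gap interpretation in the last sentence, which should be argued directly. Let $m$ be minimal in $P\setminus Y$, and suppose $m$ is not minimal in $P$. Choose $v<m$. The interval between $v$ and $m$ is finite by past-finiteness, so I pick a maximal element $v'$ of the finite set $\{w: w<m\}$ that lies above $v$. Then $(v',m)$ is a gap. Since $v'<m$ and $m$ is minimal in $P\setminus Y$, we get $v'\in Y$.

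No step looks like a real obstacle: the whole argument is a translation under reversal, and the main thing to do carefully is the bookkeeping of which side becomes up-sets and which becomes down-sets.
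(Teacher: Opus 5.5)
Your proposal is correct and takes the paper's route: the paper obtains this corollary by reading Proposition~\ref{prop:futurefiniteduals} in the reversed order $(P,\geq)$, and you carry out exactly that translation. Your direct argument for the gap interpretation, taking a maximal element of the finite set $\{w : w<m\}$, is the dual of the remark the paper makes after that proposition.
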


We say that $(P,\leq)$ has {\em all right generalized finite dualities} if
for every finite $\mathcal{F}\subseteq P$ there is a finite $\mathcal{D}\subseteq
P$ such that $(\mathcal{F},\mathcal{D})$ is a generalized finite duality pair.

We remark that the connection between dualities and computational complexity 
does not hold in the case of past-finite and future-finite partial orders.
Since the set of objects described by dualities is finite, 
it is easy to write a recognition algorithm for a fixed generalized finite duality pair.

\section{Monomorphism and embedding orders}
\label{sec:embedding}

The essential property behind the results in this section is that the underlying mapping is injective. 
For brevity we state claims for $(\Graphs,\MonoLeq)$, however they are also valid for 
$(\Graphs,\allowbreak \EmbedLeq)$, $(\DiGraphs,\allowbreak \MonoLeq)$ and $(\DiGraphs,\EmbedLeq)$, and also 
for analogous subclasses like $\DiCycles$, etc.

Since any monomorphism is injective, the order $(\Graphs,\MonoLeq)$ is past-finite.

As cycles of distinct lengths are incomparable in $\MonoLeq$, we get by Corollary~\ref{cor:pastfuturefiniteuniv} (i)  
that the order $(\Cycles,\MonoLeq)$ is past-finite-universal.

Dualities in this context often exist:

\begin{prop}[Dualities]
\label{prop:embeddual}
For every finite set of graphs $\mathcal{D}$ there is a finite set
$\mathcal{F}$ such that $(\mathcal{F},\mathcal{D})$ is a generalized finite duality pair in $(\Graphs,\MonoLeq)$.
Moreover, if $n=\max_{D\in\mathcal{D}} |V_D|$, then $\mathcal{F}$ may be chosen among graphs on at most $n+1$ vertices.
\end{prop}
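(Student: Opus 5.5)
We reduce the statement to Corollary~\ref{cor:pastfiniteduals}. Since every monomorphism is injective, $(\Graphs,\MonoLeq)$ is past-finite. Here $G\MonoHom H$ forces $|V_G|\le|V_H|$, and on a fixed vertex set there are only finitely many graphs. Put $Y=\bigcup_{D\in\mathcal{D}}\downarrow D$. By the corollary it suffices to show that $P\setminus Y$ has only finitely many minimal elements, and then take $\mathcal{F}=\min(P\setminus Y)$. We also need the bound $n+1$, so the real task is to show that every minimal element of $P\setminus Y$ has at most $n+1$ vertices. Finitely many minimal elements then follows automatically, because there are only finitely many graphs (up to isomorphism, loops allowed) on at most $n+1$ vertices.

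\textbf{Step 1: large graphs have small witnesses.} Let $G\notin Y$ with $|V_G|\ge n+2$. We claim $G$ is not minimal. Take a vertex $v$ and let $G-v$ be the induced subgraph on $V_G\setminus\{v\}$. The inclusion $G-v\to G$ is an injective homomorphism (indeed an embedding), so $G-v\MonoLeq G$. Since $|V_{G-v}|\ge n+1>n\ge|V_D|$ for every $D\in\mathcal{D}$, injectivity gives $G-v\not\MonoHom D$, so $G-v\notin Y$. Also $G-v\ne G$ in the order, since $G\not\MonoHom G-v$ by vertex count. Hence $G$ is not minimal in $P\setminus Y$. It follows that every minimal element has at most $n+1$ vertices.

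\textbf{Step 2: every element of $P\setminus Y$ dominates a minimal one.} For this to give a genuine duality we need $P\setminus Y=\bigcup_{F\in\mathcal{F}}\uparrow F$. Past-finiteness gives it directly: for $G\in P\setminus Y$, the set $\downarrow G\cap(P\setminus Y)$ is finite and nonempty, so it has a minimal element, which is minimal in $P\setminus Y$ because $P\setminus Y$ is an up-set. This is exactly the mechanism already used in Corollary~\ref{cor:pastfiniteduals}, so we simply invoke that corollary. For the embedding order and for digraphs the same argument works verbatim, since deleting a vertex yields an induced subgraph and hence an embedding.

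The only delicate point is making sure the cores and the partial order are the right objects. In the monomorphism order, $G\MonoHom H\MonoHom G$ forces equal vertex counts, and then equal edge counts since the map is bijective, so $G\cong H$. Thus every finite graph is its own core, and ``minimal element'' can be read on isomorphism classes of graphs. Once this is noted, the proof is routine.
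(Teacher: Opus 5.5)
Your proof is correct and uses the same key observation as the paper: a graph with more than $n$ vertices admits no monomorphism to any $D\in\mathcal{D}$, so every large graph outside the down-set of $\mathcal{D}$ has a subgraph on $n+1$ vertices that is already an obstruction. The paper argues this directly: it takes the minimal graphs on at most $n+1$ vertices that map to no $D$, and passes from a large $G$ to one of its $(n+1)$-vertex subgraphs. You instead route the argument through Corollary~\ref{cor:pastfiniteduals}, using vertex deletion to show that the minimal elements of $P\setminus Y$ have at most $n+1$ vertices; this gives the same set $\mathcal{F}$ and also makes explicit the easy reverse direction of the duality, which the paper leaves implicit.
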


\begin{proof}
Denote by $n$ the maximal number of vertices among graphs in $\mathcal{D}$.
First construct the set $\mathcal{F}'$ as the set of all graphs $F$ such that $|V_F|\leq
n+1$ and $F\MonoHom D$ for no $D\in \mathcal {D}$. Then choose $\mathcal{F}$ as
the set of all mutually non-isomorphic minimal elements of $(\mathcal{F}',\MonoLeq)$.

We claim that $(\mathcal{F},\mathcal{D})$ is a generalized finite duality pair.
The set $\mathcal{F}$ is finite. 
Consider any graph $G$ such that there is no $D\in \mathcal{D}$, $G\MonoHom D$. There are two
cases:
\begin{enumerate}
  \item If $|V_G|\leq n+1$ then $G\in \mathcal{F}'$ and thus also there is $F\in \mathcal{F}$,
$F\MonoHom G$.
  \item If $|V_G|>n+1$ then any subgraph $G'$ of $G$ on $n+1$ vertices yields 
$G'\in \mathcal{F'}$, $G'\MonoHom G$ which can be as in the previous case accompanied with a suitable $F\in \mathcal{F}$,
$F\MonoHom G'\MonoHom G$.
\end{enumerate}
\end{proof}

\section{Full homomorphisms}
\label{sec:fullhomo}

As a first non-trivial case we consider full homomorphisms, that is, edge- and non-edge-preserving mappings. The full homomorphism order is well established:
The dualities in the full homomorphism order were studied by Ball, Ne\v{s}et\v{r}il and Pultr in
\cite{Ball2007,Ball2010}.  The problem of the existence of a full homomorphism from $G$ to $H$ is also known as
{\em full $H$-coloring} problem and in this language it was independently studied by Feder and Hell
in \cite{Feder2008} and, more recently, by Hell and Hern{\'a}ndez-Cruz~\cite{Hell2013}. In both cases the motivation was
the correspondence of full homomorphisms to a certain matrix partition problem, see~\cite{Feder2008}.  Recent work continues this obstruction-set viewpoint, for example for full homomorphisms to paths and cycles~\cite{GuzmanPro2023FullPathsCycles}.

We closely relate the full homomorphism order to the embedding order.
First we show that cores in full homomorphisms correspond to point-determining graphs which have been studied in the 1970s by Sumner~\cite{Sumner1973} (c.f. Feder and Hell~\cite{Feder2008}).
Consequently, we show that the cores are ordered by embeddings.

Though we will state our results on full homomorphisms for undirected graphs, they indeed hold for directed graphs with almost identical proofs. The only difference is that in a directed graph $G$, one shall consider 
the out-neighborhood of a vertex 
as well as the {\em in-neighborhood}, both defined intuitively.
Two vertices in a directed graph have the same neighborhoods if both the in- and out-neighborhoods match.  

{\em Point-determining graphs} (also known as {\em mating-type graphs}, {\em mating graphs}, {\em M-graph} or {\em thin graphs}) are graphs in which no two vertices have the same neighborhoods.  If we start with any graph $G$, and gradually merge vertices with the same neighborhoods, we obtain a point-determining graph, denoted by $G_{\mathrm{pd}}$.

It is easy to observe that $G_{\mathrm{pd}}$ is always an induced subgraph of $G$. Moreover, for every directed graph $G$ it holds that $G_{\mathrm{pd}}\FullHom G\FullHom G_{\mathrm{pd}}$ and thus $G\FullHomeq G_{\mathrm{pd}}$.  This motivates the following proposition:
\begin{prop}[Cores]
\label{prop:F-core}
A graph $G$ is an F-core if and only if it is point-determining.
\end{prop}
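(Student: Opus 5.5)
The plan is to use the convention that an F-core is a graph with the fewest vertices in its full-homomorphic equivalence class, and to prove the two directions separately.

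For the direction ``F-core implies point-determining'', I use the observation made just before the statement: $G\FullHomeq G_{\mathrm{pd}}$, and $G_{\mathrm{pd}}$ arises from $G$ by merging vertices with equal neighborhoods. If $G$ is not point-determining, at least one merge occurs, so $|V_{G_{\mathrm{pd}}}|<|V_G|$. Then $G$ is not minimal in its class, and so it is not an F-core.

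For the converse, the key lemma is that every full homomorphism $f:G\to H$ from a point-determining graph $G$ is injective. Suppose $f(u)=f(v)$. Fullness together with edge preservation gives $(x,y)\in E_G \iff (f(x),f(y))\in E_H$ for all $x,y$. Hence for every $w$ we have
\[
(u,w)\in E_G \iff (f(u),f(w))\in E_H \iff (f(v),f(w))\in E_H \iff (v,w)\in E_G .
\]
The same chain works for in-neighborhoods in the directed case. So $N_G(u)=N_G(v)$, and point-determinacy forces $u=v$.

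One subtlety is loops. If $u$ has a loop, the chain with $w=u$ shows that $v$ is adjacent to $u$. One could worry that the neighborhoods then differ only through loop membership. This is not a problem, because I interpret ``same neighborhood'' as $N(u)=N(v)$ with $N(u)$ possibly containing $u$ itself. The chain gives equality of these sets exactly, so the lemma holds as stated. This bookkeeping with loops and in-/out-neighborhoods is the only delicate point I expect.

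Now let $G$ be point-determining and let $H\FullHomeq G$. The lemma gives an injective full homomorphism $G\FullHom H$, so $|V_G|\le|V_H|$. Therefore $G$ has the minimum number of vertices in its class and is an F-core.

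Uniqueness of the core up to isomorphism also follows, though it is not needed for the statement. If $G$ and $H$ are both point-determining and equivalent, the two maps between them are injective, so $|V_G|=|V_H|$ and the maps are bijective full homomorphisms, i.e. isomorphisms.
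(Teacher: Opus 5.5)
Your proof is correct and takes essentially the same route as the paper. The first direction uses $G\FullHomeq G_{\mathrm{pd}}$ just as the paper does, and the second rests on the same observation that a non-injective full homomorphism would produce two vertices with identical neighborhoods. The only difference is framing: you get minimality directly from injectivity ($|V_G|\le|V_H|$ for every $H\FullHomeq G$), whereas the paper argues by contradiction that two equivalent, non-isomorphic point-determining graphs cannot exist.
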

\begin{proof}
Recall that $G$ is an F-core if it is minimal (in the number of vertices) within its
equivalence class of $\FullHomeq$. If $G$ is an F-core, $G_{\mathrm{pd}}$ can
not be smaller than $G$ and thus $G=G_{\mathrm{pd}}$.

It remains to show that every point-determining graph is an F-core.
Consider two point-determining graphs $G\FullHomeq H$ that are not isomorphic. There are 
full homomorphisms $f:G\FullHom H$ and $g:H\FullHom G$. Because injective full
homomorphisms are embeddings, it follows that either $f$ or $g$ is not
injective.  Without loss of generality, assume that $f$ is not injective. Consider $u,v\in V_G$, $u\neq v$, such
that $f(u)=f(v)$. Because full homomorphisms preserve both edges and non-edges,
the preimage of any edge is a complete bipartite graph.
If we apply this fact on edges incident with $f(u)$, 
we derive that 
$N_G(u)=N_G(v)$, 
a contradiction with the assumption that $G$ is point-determining.
\end{proof}
Now we apply observations of Section~\ref{sec:embedding} together with the following proposition:

\begin{prop}
\label{prop:thincmp}
For F-cores $G$ and $H$ we have $G\FullHom H$ if and only if $G\EmbedHom H$.
\end{prop}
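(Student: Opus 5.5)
The backward implication is immediate: an embedding is by definition a full homomorphism, so $G\EmbedHom H$ gives $G\FullHom H$. For the forward implication, I will show that every full homomorphism $f:G\FullHom H$ whose domain $G$ is point-determining is injective. An injective full homomorphism is an embedding, and since $G$ is an F-core, Proposition~\ref{prop:F-core} tells us it is point-determining. Note that the argument needs only that $G$ is point-determining; the hypothesis on $H$ is not used.

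For injectivity, I will reuse the argument from the proof of Proposition~\ref{prop:F-core}. Suppose $u\neq v$ in $V_G$ with $f(u)=f(v)$. I claim $N_G(u)=N_G(v)$. Take any $w\in V_G$. Fullness gives
\[
(u,w)\in E_G \iff (f(u),f(w))\in E_H \iff (f(v),f(w))\in E_H \iff (v,w)\in E_G ,
\]
where the first and last steps combine the homomorphism property with fullness. The same chain with the order of the pairs reversed handles in-neighbourhoods in the directed case.

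The one step needing care is loops, which are allowed. Take $w=u$ in the chain above: $(u,u)\in E_G$ if and only if $(v,u)\in E_G$. Similarly, $(u,v)\in E_G$ if and only if $(v,v)\in E_G$. Hence the condition ``$u$ is adjacent to $u$'' matches ``$v$ is adjacent to $u$'', and this is exactly what equality of the open neighbourhoods $N_G(u)$ and $N_G(v)$ requires, including at $u$ and $v$ themselves. So the neighbourhoods agree as sets, with no exceptional vertices.

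This contradicts the assumption that $G$ is point-determining. Therefore $f$ is injective, hence an embedding, which completes the forward implication. I expect no step to be a real obstacle; the only point of care is the loop and self-vertex bookkeeping above.
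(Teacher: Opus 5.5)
Your proof is correct and follows the paper's argument. The backward direction is trivial. For the forward direction, the paper likewise reuses the reasoning from Proposition~\ref{prop:F-core} to conclude that a full homomorphism out of a point-determining graph is injective, hence an embedding. Your explicit chain of equivalences, including the checks at $w=u$ and $w=v$ for loops, just spells out the step the paper compresses into the remark that preimages of edges are complete bipartite.
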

\begin{proof}
An embedding is a special case of a full homomorphism. In the opposite direction
consider a full homomorphism $f:G \FullHom H$. By the same
argument as in the proof of Proposition \ref{prop:F-core} we get that $f$ is injective,
as otherwise $G$ would not be point-determining.
\end{proof}

In other words, the orders $\FullLeq$, $\MonoLeq$ and $\EmbedLeq$ coincide on F-cores.
As an immediate consequence we get that the partial orders $(\Graphs,\FullLeq)$ and $(\DiGraphs,\FullLeq)$ are past-finite. 
Clearly, (directed) cycles are F-cores, hence the partial order $(\Cycles,\FullLeq)$ and $(\DiCycles,\FullLeq)$ are also past-finite-universal.

The following proposition was independently proved by Hell and Hern\'andez-Cruz~\cite[Theorem 1]{Hell2013}. We include a simple self-contained proof:

\begin{prop}[\cite{Hell2013}]
\label{prop:Fcoregap}
Every F-core $G$ with at least 2 vertices contains an $F$-core with $|V_G|-1$ vertices as an induced subgraph.
\end{prop}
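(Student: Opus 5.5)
We reduce the statement to a purely combinatorial claim and prove that claim by contradiction with a parity (linear-algebra over $\mathrm{GF}(2)$) argument. By Proposition~\ref{prop:F-core}, $G$ is point-determining, and an induced subgraph $G-v$ is an F-core exactly when it is point-determining. So it suffices to find a vertex $v\in V_G$ such that $G-v$ is point-determining. Suppose, for contradiction, that no such vertex exists.

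First I analyse what failure at a single vertex means. If $G-v$ is not point-determining, there are distinct $x,y\in V_G\setminus\{v\}$ with $N_G(x)\setminus\{v\}=N_G(y)\setminus\{v\}$. Since $G$ itself is point-determining, $N_G(x)\neq N_G(y)$, and hence
\[
N_G(x)\,\triangle\, N_G(y)=\{v\}.
\]
This holds equally with loops allowed, since neighborhoods are just subsets of $V_G$. For each $v$ fix one such pair and call it $e_v=\{x,y\}$.

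Next I build an auxiliary multigraph $T$ on vertex set $V_G$ with one edge $e_v$ for every $v\in V_G$, labelled by $v$. The edges are pairwise distinct as unordered pairs: the pair $\{x,y\}$ determines the set $N_G(x)\triangle N_G(y)$, hence determines the label. Thus $T$ is a simple graph with $n=|V_G|\ge 2$ vertices and $n$ edges. It therefore contains a cycle $x_1x_2\dots x_kx_1$ with $k\ge 3$, whose edges carry pairwise distinct labels $v_1,\dots,v_k$.

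Finally I view neighborhoods as characteristic vectors in $\mathrm{GF}(2)^{V_G}$. Summing the relations $N_G(x_i)\triangle N_G(x_{i+1})=\{v_i\}$ around the cycle gives $\emptyset$ on the left-hand side, because each neighborhood occurs twice. On the right-hand side it gives $\{v_1\}\triangle\cdots\triangle\{v_k\}=\{v_1,\dots,v_k\}\neq\emptyset$, since the labels are distinct. This contradiction shows that some $G-v$ is point-determining, hence an F-core on $|V_G|-1$ vertices, as required. The only delicate step is confirming that $T$ has no repeated edges, so that $n$ edges on $n$ vertices really force a cycle; this is handled by the observation that the label is a function of the pair.
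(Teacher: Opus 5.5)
Your proof is correct and takes essentially the paper's route. An auxiliary graph on $V_G$ with one edge for each vertex whose removal breaks point-determinacy has $|V_G|$ edges, so it contains a cycle, and telescoping the relations $N_G(x_i)\triangle N_G(x_{i+1})=\{v_i\}$ around that cycle gives the contradiction. Your choice of exactly one pair $e_v$ per vertex $v$ also makes explicit why the labels on the cycle are distinct: the paper's graph $L$ keeps all determined pairs, and a single $v$ may determine several disjoint pairs, so the paper uses this distinctness only implicitly.
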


\begin{proof}
It follows from Proposition~\ref{prop:thincmp} that the ordering $\FullLeq$ on $F$-cores
respects ordering by size: $H \FullLeq G$ implies $|V_H|<|V_G|$.

Hence, if there is a vertex $v$ of $G$ such that the graph $G\setminus v$ is point-determining, 
it is the desired F-core.

Assume to the contrary that for every vertex $v\in V_G$ there are at least two vertices
such that their neighborhoods are identical in $G\setminus v$. 

We construct an auxiliary graph $L$ on the vertices of $G$, where $u$ and $u'$ are adjacent if there 
is a vertex $v$ such that $N_{G\setminus v}(u)=N_{G\setminus v}(u')$. As for each edge of $L$ exists a unique $v$,
the graph $L$ has at least $|V_G|$ edges. Hence it contains a cycle, say $u_1,\dots,u_k$, $k\ge 3$.

Let $v_i$ be the vertex of $G$ whose removal makes neighborhoods of $u_i$ and $u_{i+1}$ identical.
By the uniqueness of vertices $v_i$, the neighborhoods $N_G(u_1)$ and $N_G(u_k)$ differ in vertices $v_1,\dots,v_{k-1}$.
On the other hand, as $u_1$ and $u_k$ are adjacent in $L$, their neighborhoods should differ in only one vertex, a contradiction.
\end{proof}

Observe, that from the Propositions~\ref{prop:thincmp} and \ref{prop:Fcoregap} immediately follows that 
there is no F-core strictly between F-cores $G$ and $G\setminus v$, hence such a pair forms an F-gap.
By analogous ideas we show that there are no other gaps in the full homomorphism order.

\begin{thm}[Gaps]
\label{thm:fullgap}
If $G$ and $H$ are F-cores and $(G,H)$ is an F-gap, then $G$ can be obtained from $H$
by removal of only one vertex. Consequently, the gaps in the full homomorphism order coincide
with the gaps in embedding order.
\end{thm}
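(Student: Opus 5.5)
The plan is to reduce the statement to a purely combinatorial claim about point-determining graphs and then argue by size. Let $(G,H)$ be an F-gap of F-cores, so $G\FullLeq H$ and $G\not\FullHomeq H$. By Proposition~\ref{prop:thincmp} there is an embedding $G\EmbedHom H$. So we may assume $G$ is an induced subgraph of $H$, with $V_G\subsetneq V_H$. Put $W=V_H\setminus V_G$ and $m=|W|\ge 1$. We must show $m=1$. Suppose instead that $m\ge 2$. We will exhibit an F-core $K$ with $G<K<H$ in $\FullLeq$.

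\textbf{Candidate $K$.} Fix $v\in W$ and let $K=(H\setminus v)_{\mathrm{pd}}$. We form it by merging twin classes of $H\setminus v$, and we choose the representatives so that every vertex of $G$ is kept. This is possible because two vertices $u,u'\in V_G$ cannot be twins in $H\setminus v$. Indeed, $G$ is an induced subgraph of $H\setminus v$ and is point-determining. So $N_G(u)\neq N_G(u')$, and this difference survives in $H\setminus v$. Hence every twin class contains at most one vertex of $G$.

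It follows that $K$ is point-determining, hence an F-core by Proposition~\ref{prop:F-core}. It contains $G$ as an induced subgraph and is an induced subgraph of $H$, so $G\FullLeq K\FullLeq H$. Recall from the proof of Proposition~\ref{prop:Fcoregap} that $\FullLeq$ on F-cores is strictly monotone in the number of vertices. Since $|V_K|<|V_H|$, it suffices to find some $v\in W$ for which $|V_K|>|V_G|$.

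A useful structural fact follows. If $u,u'$ are twins in $H\setminus v$, then their neighbourhoods in $H$ differ exactly in $v$. Consequently no three vertices can be pairwise twins in $H\setminus v$, so every twin class has size at most $2$.

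\textbf{Main obstacle: choosing $v$.} The difficult case is when $K=G$ for every choice of $v\in W$. For a fixed $v$ this means that every $w\in W\setminus\{v\}$ is a twin in $H\setminus v$ of some $g_w\in V_G$, and that $w\mapsto g_w$ is injective. Equivalently, $N_H(w)\,\triangle\, N_H(g_w)=\{v\}$.

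Since $m\ge 2$, pick distinct $v,w\in W$ and apply this assumption both to the deletion of $v$ and to the deletion of $w$. This gives $g\in V_G$ with $N_H(w)\triangle N_H(g)=\{v\}$, and $g'\in V_G$ with $N_H(v)\triangle N_H(g')=\{w\}$. I would then derive a contradiction by comparing adjacencies among $v,w,g,g'$ using symmetry of the edge relation:
\begin{itemize}
\item $v\sim w$ holds exactly when $v\not\sim g$;
\item $w\sim v$ holds exactly when $w\not\sim g'$;
\item the neighbourhoods of $g$ and $g'$ must be consistent with both relations.
\end{itemize}
The goal is to conclude that $g$ and $g'$ are twins in $G$ or that $g=g'$ leads to an inconsistency, contradicting the fact that $G$ is point-determining. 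Loops need separate bookkeeping, since a vertex may lie in its own neighbourhood.

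If this local four-vertex analysis does not close, I would fall back on the counting scheme of Proposition~\ref{prop:Fcoregap}. Build the auxiliary graph $L$ whose edges are twin pairs in $H\setminus v$ with $v\in W$, each edge labelled by its unique $v$. Each edge has at least one endpoint in $W$, and the ``differ exactly in $v$'' property rules out cycles as before. One then needs to show that $L$ is forced to have too many edges among the $m$ vertices of $W$ and their partners.

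\textbf{Conclusion.} Once $m=1$ is established, $G$ arises from $H$ by deleting a single vertex. Combined with the remark after Proposition~\ref{prop:Fcoregap} that every such pair is a gap, and with Proposition~\ref{prop:thincmp}, this identifies the F-gaps with the gaps of the embedding order on F-cores.
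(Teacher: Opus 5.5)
Your plan has a genuine gap: the ``hard case'' you intend to refute is not contradictory. So restricting to candidates $(H\setminus v)_{\mathrm{pd}}$ with $v\in W$, all built around a \emph{fixed} copy of $G$, cannot prove the theorem.

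Take $H=P_4$ with vertices $w,g',g,v$ in path order, and let $G$ be the middle edge $gg'$, so $W=\{v,w\}$. Both graphs are point-determining. Moreover $N_H(w)\triangle N_H(g)=\{v\}$ and $N_H(v)\triangle N_H(g')=\{w\}$, hence $(H\setminus v)_{\mathrm{pd}}=(H\setminus w)_{\mathrm{pd}}=G$. This realizes your four-vertex configuration with $g\neq g'$, $g\sim g'$ and $v\not\sim w$. So no bookkeeping of adjacencies among $v,w,g,g'$ can give a contradiction. The fallback via $L$ fails too: for $|W|=2$ it has only two labelled edges, and no cycle is forced.

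Of course $(K_2,P_4)$ is not an F-gap. But the intermediate F-core is $H\setminus g\cong K_2+K_1$, obtained by deleting a vertex \emph{of} $G$, which your scheme never looks at. For $|W|\ge 3$ your framework does close: if some $w\in W$ had partners $g_1,g_2\in V_G$ for two different deleted vertices $v,v'$, then $N_H(g_1)\triangle N_H(g_2)=\{v,v'\}\subseteq W$, so $g_1\neq g_2$ would be twins in $G$. The real difficulty is exactly $|W|=2$.

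What is missing is that the gap hypothesis has to be applied to \emph{every} induced copy of $G$ in $H$, not only the one you fixed. In your hard case $w$ and $g$ are twins in $H\setminus v$, so $(V_G\setminus\{g\})\cup\{w\}$ induces another copy of $G$ that avoids $g$. The gap assumption then forces $H\setminus g$ to be non-point-determining as well, i.e.\ $g$ determines some pair. That yields a new swap, a new copy, and so on.

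The paper's proof is precisely this ``rolling'' argument. Starting from a pair supplied by Lemma~\ref{lem:determining}, it builds two alternating sequences $u_0,u_1,\dots$ and $v_0,v_1,\dots$ together with copies $G_i,G'_i\cong G$ inside $H$. Each $u_i$ determines a pair of consecutive $v$'s, and each $v_{i+1}$ determines a pair of consecutive $u$'s. Finiteness of $H$ forces a repetition, and following whether a fixed vertex lies in the neighbourhoods along the repeated stretch gives the contradiction. You need some iteration over re-embeddings of $G$ with a termination argument of this kind to establish $m=1$.
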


Given a graph $G$ and a vertex $v\in V_G$ we denote by $G\setminus v$ the graph created from $G$ by removing vertex $v$.
We say that a vertex $v$ \emph{determines} a pair of vertices $u$ and $u'$, if  $N_{G\setminus v}(u)=N_{G\setminus v}(u')$.
This relation will play a key role in our analysis.  
We make use of the following Lemma:

\begin{lem}
\label{lem:determining}
Given a graph $G$ and a subset $A$ of the set of vertices of $G$ denote by $L$ a graph on the vertices of $G$, where $u$ and $u'$ are adjacent if and only if there 
is $v\in A$  that determines $u$ and $u'$. Let $S$ be any spanning tree of $L$.
Denote by $B\subseteq A$ the set of vertices that determine some pair of vertices connected by an
edge of $L$ and by $C\subseteq B$ set of vertices that determine some pair of vertices connected by 
an edge of $S$. Then $B=C$.
\end{lem}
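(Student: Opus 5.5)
The inclusion $C\subseteq B$ holds by definition, since every edge of $S$ is an edge of $L$. So the plan is to prove $B\subseteq C$ with a parity (symmetric-difference) argument along paths in the spanning tree $S$.

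\emph{Step 1: what ``$v$ determines $u,u'$'' says about full neighborhoods.} Here $N_{G\setminus v}(u)=N_G(u)\setminus\{v\}$. Hence $v$ determines $u,u'$ exactly when the symmetric difference $N_G(u)\,\triangle\,N_G(u')$ is contained in $\{v\}$. I will assume $G$ is point-determining, as it is in every application (Proposition~\ref{prop:Fcoregap} and Theorem~\ref{thm:fullgap}). Then $N_G(u)\neq N_G(u')$ for $u\neq u'$, and the inclusion becomes an equality, $N_G(u)\,\triangle\,N_G(u')=\{v\}$. In particular every edge of $L$ is determined by a \emph{unique} vertex $v\in A$. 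Without this assumption the statement can fail: a twin pair would be determined by every vertex of $A$. So I would make the point-determining hypothesis explicit, or note that the lemma is only invoked in that setting. Loops cause no trouble, since the neighborhood identities above are purely set-theoretic.

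\emph{Step 2: the parity argument.} Take $v\in B$, and let $uu'$ be an edge of $L$ determined by $v$. Since $S$ is a spanning tree of $L$, and $u,u'$ lie in the same component of $L$, there is a path $u=x_0,x_1,\dots,x_m=u'$ in $S$. By Step 1, each edge $x_{i-1}x_i$ of this path is determined by a unique $w_i\in C$, with $N_G(x_{i-1})\triangle N_G(x_i)=\{w_i\}$. Taking symmetric differences along the path telescopes:
\[
\{v\}=N_G(u)\triangle N_G(u')=\{w_1\}\triangle\{w_2\}\triangle\cdots\triangle\{w_m\}.
\]
The right-hand side consists of the vertices that occur an odd number of times among $w_1,\dots,w_m$. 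Hence $v=w_i$ for some $i$, and so $v\in C$.

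If $L$ is disconnected, ``spanning tree'' should be read as a spanning forest; the argument is unchanged, because $u$ and $u'$ always lie in the same component. I expect the main obstacle to be Step 1: making sure the determining vertex is unique, which is exactly where point-determinacy is needed. Once that holds, Step 2 is routine bookkeeping.
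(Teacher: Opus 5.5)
Your proof is correct and is essentially the paper's argument. The paper also follows the $S$-path from $u$ to $u'$ and notes that consecutive neighborhoods differ only in the determining vertex, so membership of $v$ must switch at some $S$-edge, which is then determined by $v$; your telescoping symmetric difference is an algebraic packaging of this same idea. Your explicit point-determining hypothesis and your spanning-forest reading are exactly what the paper uses tacitly, when it asserts that each pair has at most one determining vertex and that $v$ lies in exactly one of $N_G(u)$ and $N_G(u')$.
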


\begin{proof}
For every pair of vertices there is at most one vertex determining them, hence $C\subseteq B\subseteq A$.

Assume to the contrary that there is a vertex $v\in B\setminus C$ and thus every
pair determined by $v$ is an edge of $L$ but not an edge of $S$.  Denote by $\{u,u'\}$  some such edge of $L$ determined by $v\in B$.
Adding this edge to $S$ closes a cycle. Denote by $u=v_1,
v_2,\ldots v_n=u'$ the vertices of $G$, such that every consecutive pair is an
edge of $S$.  Without loss of generality, we may assume that $v\in N_G(v_1)$ and $v\notin
N_G(v_n)$. Because $v\in N_G(v_i)$ implies $v\in N_G(v_{i+1})$ unless $v$
determines pair $\{v_i,v_{i+1}\}$ we also know that there is $1\leq i<n$,
such that $v$ determines $v_i$ and $v_{i+1}$. This is in contrary with the fact that
${v_i, v_{i+1}}$ forms an edge of $S$.
\end{proof}

\begin{proof}[Proof of Theorem~\ref{thm:fullgap}]
Assume to the contrary that there are F-cores $G$ and $H$ such that $(G,H)$ is
an F-gap, but $G$ differs from $H$ by more than one vertex.  By induction we construct two infinite sequences of vertices of $H$ denoted by 
$u_0,u_1,\ldots$ and $v_0,v_1,\ldots$ along with two infinite sequences of
induced subgraphs of $H$ denoted by $G_0,G_1,\ldots$ and $G'_0,G'_1,\ldots$ such that for every $i\geq 0$ it holds that:
\begin{enumerate}
\setlength\itemsep{0em}
\item $G_i$ and $G'_i$ are isomorphic to $G$,
\item $G_i$ does not contain $u_i$ and $v_i$,
\item $G'_i$ does not contain $u_i$ and $v_{i+1}$
\item $u_i$ and $u_{i+1}$ is determined by $v_i$, and,
\item $v_i$ and $v_{i+1}$ is determined by $u_i$.
\end{enumerate}

Put $G_0=G$ and $A=V_H\setminus V_G$.
Consider the spanning tree $S$ given by Lemma~\ref{lem:determining}.
 Because no vertex of $A$ can be removed to obtain an induced
point-determining subgraph, it follows that every vertex must have a
corresponding edge in $S$. Consequently the number of edges of $S$ is at least
$|A|$. Because $G$ itself is point-determining, it follows that every edge of
$S$ must contain at least one vertex of $A$.  These two conditions yield
a pair of vertices $v_0\in A=V_H\setminus V_G$ and $v_1\in V_G$ connected by an
edge in $S$ and also a vertex $u_0\in A$, which determines them. 
We have obtained $G_0, u_0, v_0, v_1$ with the desired properties.
This finishes the initial step of the induction.

Assume that at the induction step we have constructed $G_i, u_i, v_i, v_{i+1}$.
We show the construction of $G'_i$ and $u_{i+1}$. We distinguish two cases.
If $v_{i+1}\notin V_{G_i}$ we put $G'_i=G_i$.  If $v_{i+1}\in V_{G_i}$ we let $G'_i$ to be
the graph induced by $H$ on $(V_{G_i}\setminus\{v_{i+1}\})\cup \{v_i\}$.
Since the neighbourhood of $v_i$ and $v_{i+1}$ differs only by a vertex $u_i\notin G_i$, which
determines them, we know that $G'_i$ is isomorphic to $G_i$ (and thus also to $G$).
Moreover, $u_i$ is not a vertex of $G'_i$ (because $u_i\notin V_{G_i}$ cannot determine itself and thus $u_i\neq v_i$). If $H$ was point-determining
after removal of $v_{i+1}$, we would obtain a contradiction similarly as before. Hence we may assume that $v_{i+1}$
determines at least one pair of vertices.  Since the neighbourhoods of $v_{i+1}$ and of $v_i$ differ 
only by $u_i$,
we know that one vertex of this pair is $u_i$. Denote by $u_{i+1}$ the other vertex. 

We proceed similarly, when $G'_i, u_i, u_{i+1}$ and $v_{i+1}$ are given.
If $u_{i+1}\notin V_{G'_i}$, we put $G_{i+1}=G'_i$. If $u_{i+1}\in V_{G'_i}$, we let $G_{i+1}$ to be
the graph induced by $H$ on $(V_{G'_i}\setminus\{u_{i+1}\})\cup \{u_i\}$.
Again, $G_{i+1}$ is isomorphic to $G$ and does not contain $u_{i+1}$ nor $v_{i+1}$. Denote by $v_{i+2}$ 
the vertex determined by $u_{i+1}$ from $v_{i+1}$ (which again must exist by our assumptions). 
We have obtained $G_{i+1}, u_{i+1}, v_{i+1}, v_{i+2}$ with the desired properties.
This finishes the inductive step of our construction.

Because $H$ is finite, we know that both sequences $u_0,u_1,\ldots$  and
$v_0,v_1,\ldots$ contain repeated vertices.
Without loss of generality we may assume that the repeated vertex with the lowest index $j$ appears in the first sequence. We thus have $u_j=u_i$ for some $i<j$. By the minimality of $j$ we may assume
that $v_i,v_{i+1},\ldots v_{j-1}$ are all unique. Assume that $v_i$ is
in the neighbourhood of $u_i$. Then $v_i$ is not in the neighbourhood of $u_{i+1}$ (since it determines this pair) and consequently also $u_{i+1},u_{i+2},\ldots,u_{j}$.
We get a contradiction with $u_j=u_i$. If $v_i$ is not in the neighbourhood of $u_i$, we proceed analogously.
\end{proof}

Now we are ready to give a very simple proof of the existence of generalized dualities.
\begin{thm}[Dualities]
For every finite set of directed graphs $\mathcal{D}$ there is a finite set of graphs
$\mathcal{F}$ such that $(\mathcal{F},\mathcal{D})$ is a generalized finite F-duality pair.
Moreover, if $n=\max_{D\in\mathcal{D}} |V_D|$, then $\mathcal{F}$ may be chosen to consist of F-cores with at most $n+1$ vertices.
\end{thm}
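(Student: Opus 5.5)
The plan is to reduce to the monomorphism/embedding setting of Section~\ref{sec:embedding} by passing to F-cores, and then to repeat the argument of Proposition~\ref{prop:embeddual} with embeddings in place of monomorphisms.

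First, we may replace every $D\in\mathcal{D}$ by its F-core $D_{\mathrm{pd}}$. This changes neither the relation $G\FullHom D$ nor the bound, since $|V_{D_{\mathrm{pd}}}|\le|V_D|\le n$. Likewise, any test graph $G$ may be replaced by $G_{\mathrm{pd}}$, because $G\FullHomeq G_{\mathrm{pd}}$. By Proposition~\ref{prop:thincmp}, for F-cores $G$ and $D$ the relation $G\FullHom D$ holds if and only if $G$ embeds into $D$ as an induced subgraph.

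Next define the obstructions. Let $\mathcal{F}'$ be the set of all F-cores $F$ with $|V_F|\le n+1$ such that $F\FullHom D$ for no $D\in\mathcal{D}$. Let $\mathcal{F}$ be the set of its minimal elements up to isomorphism. This set is finite, and each of its members has at most $n+1$ vertices.

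One direction is immediate. If some $F\in\mathcal{F}$ satisfies $F\FullHom G$ and also $G\FullHom D$, then composing gives $F\FullHom D$, contradicting $F\in\mathcal{F}'$.

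For the converse, let $G$ be an F-core with $G\FullHom D$ for no $D\in\mathcal{D}$.
\begin{itemize}
\item If $|V_G|\le n+1$, then $G\in\mathcal{F}'$, so some minimal $F\in\mathcal{F}$ lies below $G$ and hence $F\FullHom G$.
\item If $|V_G|>n+1$, the key step is to find a point-determining induced subgraph $G'$ of $G$ with exactly $n+1$ vertices. Repeated application of Proposition~\ref{prop:Fcoregap} supplies this, since it lets us delete one vertex at a time while remaining an F-core. The inclusion $G'\hookrightarrow G$ is an embedding, hence a full homomorphism.
\end{itemize}
It remains to check $G'\in\mathcal{F}'$ in the second case. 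Suppose instead that $G'\FullHom D$ for some $D$. Then $G'$ embeds into the F-core of $D$ by Proposition~\ref{prop:thincmp}, which would force $n+1=|V_{G'}|\le n$, a contradiction. So $G'\in\mathcal{F}'$, and as before some $F\in\mathcal{F}$ satisfies $F\FullHom G'\FullHom G$.

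The only nontrivial ingredient, and the main obstacle compared with the monomorphism case, is this existence of point-determining induced subgraphs of every size. An arbitrary $(n+1)$-vertex subgraph of $G$ need not be an F-core, and its F-core could be too small to be excluded by size. Proposition~\ref{prop:Fcoregap} resolves this. The argument carries over verbatim to directed graphs using in- and out-neighbourhoods, as remarked earlier in the section.
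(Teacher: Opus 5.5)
Your proof is correct, but it takes a different route from the paper's.

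\textbf{The paper's route.} The paper argues through the abstract mechanism of Section~\ref{sec:pastfinite}. It applies Corollary~\ref{cor:pastfiniteduals} to the down-set $Y$ generated by the F-cores of $\mathcal{D}$. Every minimal element $H$ of the complement of $Y$ is then the upper end of an F-gap $(G,H)$ with $G\in Y$. The full gap characterization, Theorem~\ref{thm:fullgap}, then forces $|V_H|=|V_G|+1\le n+1$.

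\textbf{Your route.} You transplant the direct argument of Proposition~\ref{prop:embeddual}. You need only the much weaker Proposition~\ref{prop:Fcoregap}, applied repeatedly, to shrink a large F-core that maps to no member of $\mathcal{D}$ down to an induced F-core on exactly $n+1$ vertices. Proposition~\ref{prop:thincmp} then excludes that subgraph from the down-set of $\mathcal{D}$ by size.

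\textbf{What each buys.} Your argument is more elementary: it bypasses Lemma~\ref{lem:determining} and the infinite-sequence argument behind Theorem~\ref{thm:fullgap}. It is essentially the argument described in the remark after the theorem, which names Proposition~\ref{prop:Fcoregap} as the key ingredient. The paper's route instead shows off the general gap--duality correspondence for past-finite orders. It also identifies the obstruction set immediately as $\min(P\setminus Y)$, the unique irredundant choice, where $P$ is the order of F-cores. Your $\mathcal{F}$ coincides with that set, because an F-core strictly below a member of $\mathcal{F}'$ has strictly fewer vertices by Proposition~\ref{prop:thincmp}.

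The two proofs really differ only in the key lemma. The paper's proof would also go through with Proposition~\ref{prop:Fcoregap} in place of Theorem~\ref{thm:fullgap}. A minimal $H\notin Y$ with at least two vertices has an F-core $H\setminus v$ strictly below it, which must lie in $Y$.
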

\begin{proof}
Replace every member of $\mathcal{D}$ by its F-core; this does not increase $n$ and does not change the down-set generated by $\mathcal{D}$.  Let
\[
Y=\bigcup_{d\in\mathcal{D}}\downarrow d
\]
in the full homomorphism order on F-cores.  By Corollary~\ref{cor:pastfiniteduals}, it is enough to show that $P\setminus Y$ has finitely many minimal elements.

The empty graph is the unique minimum of the full homomorphism order.  Hence a minimal element of $P\setminus Y$ must be the upper endpoint $H$ of an F-gap $(G,H)$ with $G\in Y$.  For such a $G$ there is $D\in\mathcal{D}$ with $G\FullLeq D$.  By Proposition~\ref{prop:thincmp}, this means that $G$ embeds into the F-core $D$, and so $|V_G|\leq n$.  By Theorem~\ref{thm:fullgap}, the gap $(G,H)$ adds only one vertex; hence $|V_H|\leq n+1$.  There are only finitely many F-cores of size at most $n+1$, and they form a valid set $\mathcal{F}$ of left-hand obstructions.
\end{proof}

Essentially the same duality result (for undirected graphs)
was proved by Ball, Ne\v{s}et\v{r}il and Pultr~\cite{Ball2010} (stated in terms
of generalized finite dualities) and by Feder and Hell~\cite{Feder2008} (stated
as a bound on the obstruction to full $H$-colorability). The second was
extended to directed graphs by Hell and Hern\'andez-Cruz~\cite{Hell2013}
independently of our proof (Ball et al.~\cite{Ball2010} state their results 
in terms of relational structures so their approach covers directed graphs, too). 
A simple argument for the bound also appears in~\cite{Xie2006}. 
It is interesting to observe that all proofs are different.
Both \cite{Ball2010} and \cite{Xie2006} use Ramsey-type arguments. Our argument
is based on embeddings of point-determining graphs and is analogous to~\cite{Feder2008}, 
but we use a simpler argument in Proposition~\ref{prop:Fcoregap}
than the one using a characterization of nuclei in point determining graphs~\cite{Feder2008,Hell2013}.

\section{Surjective homomorphisms}
\label{sec:surjhomo}

Vertex and edge surjective homomorphisms are natural constrained variants.  Although less understood than ordinary homomorphisms, their complexity theory is active; see the survey~\cite{Bodirsky2012} and recent work on reflexive digraph targets, counting compactions and algebraic Sur-CSP methods~\cite{Larose2019SurjectiveReflexive,Focke2019CountingSurjective,Chen2024SurjectiveCSP}.

From our point of view, surjective homomorphisms are similarly easy as
embeddings and monomorphisms.  We consider three cases---vertex surjective
homomorphisms, edge-surjective homomorphisms, and surjective homomorphisms (that are both edge and vertex surjective).
\begin{prop}[Cores]
\label{prop:shomo-core}
Every finite graph is S-core and VS-core.

ES-core of graph $G$ is created from $G$ by removing all isolated vertices.
\end{prop}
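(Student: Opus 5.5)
The plan is to show that for VS and S, equivalence under the relation forces isomorphism. Then every graph is alone in its class, up to isomorphism, and is therefore its own core. For ES, we exhibit the isolated-vertex-free subgraph as the unique smallest member of the class.

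\emph{VS and S.} Suppose $f:G\VSurHom H$ and $g:H\VSurHom G$. Vertex-surjectivity gives $|V_G|\ge|V_H|\ge|V_G|$, so both maps are bijections on vertices. A vertex-bijective homomorphism sends distinct edges to distinct edges, so $|E_G|\le|E_H|$; likewise $|E_H|\le|E_G|$. Hence $f$ is a bijection on vertices that maps $E_G$ injectively into a set of the same finite size, so $f$ maps $E_G$ onto $E_H$. Therefore $f$ is an isomorphism. The equivalence class of $G$ consists only of graphs isomorphic to $G$, so $G$ is a VS-core. Every surjective homomorphism is vertex-surjective, so S-equivalence implies VS-equivalence, and the same conclusion holds for S.

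\emph{ES.} Let $G^-$ be $G$ with its isolated vertices removed. A vertex with a loop is not isolated, so $G^-$ has the same edge set as $G$. The inclusion $G^-\to G$ is edge-surjective. In the other direction, map $G$ to $G^-$ by the identity on $V_{G^-}$ and send each isolated vertex anywhere in $V_{G^-}$; this is a homomorphism (isolated vertices carry no edges) and it is edge-surjective. If $G$ has no edges, so that $G^-$ is the empty graph, I treat the map from $G$ to the empty graph as the degenerate case, consistent with the paper's convention that the empty graph is allowed. Thus $G\Homeq G^-$ in the ES sense.

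It remains to show that $G^-$ is the unique smallest member of its ES class. Suppose $H$ is ES-equivalent to $G$. An edge-surjective map $G\to H$ hits every edge of $H$. A vertex of $H$ lies on an edge only if it is the image of an endpoint of an edge, so the number of non-isolated vertices of $H$ is at most $|V_{G^-}|$. Symmetrically it is at least $|V_{G^-}|$. Hence $|V_H|\ge|V_{H^-}|=|V_{G^-}|$, so $G^-$ has the minimum number of vertices.

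The main obstacle is uniqueness: I must show that any minimum-size member of the class is isomorphic to $G^-$. A minimum-size member $H$ has no isolated vertices. Composing an ES map $G^-\to G\to H$ gives an edge-surjective map $G^-\to H$, so the vertex set of $G^-$ maps onto $V_H=V_{H^-}$, and the two have the same size. This map is therefore a vertex bijection, and by the edge-counting argument from the VS case it is an isomorphism. Hence $G^-$ is the ES-core, as stated in Proposition~\ref{prop:shomo-core}.
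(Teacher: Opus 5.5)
Your VS/S argument is the paper's: vertex-surjectivity in both directions forces vertex bijections, and comparing edge counts upgrades them to isomorphisms. Your ES argument follows the paper's reduction. Isolated vertices carry no edges, and an edge-surjective map onto a graph without isolated vertices is vertex-surjective, which returns you to the bijection-plus-edge-count argument. Your minimality and uniqueness steps are spelled out more fully than in the paper. All of this is correct as long as $G$ has at least one edge (or is itself the empty graph).

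The step that fails is your treatment of a nonempty edgeless $G$. Then $G^-$ is the empty graph, and there is no function from a nonempty vertex set to the empty set. So no map $G\to G^-$ exists, and calling it a ``degenerate case'' does not produce one. In fact the ES claim is false there:
\begin{itemize}
\item any map between nonempty edgeless graphs is vacuously an edge-surjective homomorphism;
\item no graph with an edge is ES-comparable with an edgeless one;
\item hence the ES-class of such a $G$ is exactly the class of nonempty edgeless graphs, and its ES-core is $K_1$, a single isolated vertex.
\end{itemize}
This agrees with the paper itself, which later lists the single vertex among the maximal elements of the ES order. The paper's proof has the same blind spot: deleting isolated vertices does not preserve ES-equivalence when it deletes every vertex. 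The repair is to state the ES part for graphs with at least one edge and treat edgeless graphs separately. Their ES-core is $K_1$ if the graph is nonempty, and the empty graph if it is empty. Positing a map into the empty graph is not a valid fix.
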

\begin{proof}
Consider graph $G$ and its S-core or VS-core $H$.  By definition, there are
surjective homomorphisms $f:G\VSurHom H$ and $g:H\VSurHom G$.  By
surjectivity of $f$ we have $|V_G|\geq |V_H|$. From the surjectivity of $g$ we have
$|V_G|\leq |V_H|$ and thus $|V_G|=|V_H|$.  It follows that both $f$ and $g$ are
monomorphisms. Consequently $|E_G|\geq |E_H|\geq |E_G|$ and thus $f$ and $g$
are also isomorphisms.

On graphs without isolated vertices edge surjective homomorphisms are
surjective homomorphisms.  From the fact that removing isolated vertices does not affect
any edges we know that if $G'$ is created from $G$ by removing (some) isolated
vertices, then $G\SurHomeq G'$.
\end{proof}

It follows that $(\DiGraphs,\ESurLeq)$ is actually a suborder of
$(\DiGraphs,\SurLeq)$ that is induced on $(\DiGraphs,\SurLeq)$ by the class of
all ES-cores (that is graphs without isolated vertices).  

All the basic properties of the surjective homomorphism orders are almost immediate:

\begin{prop}[Future-finiteness]
\label{prop:shomo-univ}
The partial orders $(\DiGraphs,\SurLeq)$, $(\DiGraphs,\ESurLeq)$ and $(\DiGraphs,\VSurLeq)$ are future-finite.
\end{prop}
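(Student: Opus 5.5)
Future-finiteness means that for every core $G$ the up-set $\uparrow G$ is finite.  The plan is to show that whenever $G \leq H$ in any of the three orders, $H$ is bounded in size by a function of $G$.  Since there are only finitely many digraphs on a bounded number of vertices up to isomorphism, the up-set is then finite.

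For $\VSurLeq$ and $\SurLeq$ the bound is immediate.  A vertex-surjective homomorphism $f:G\VSurHom H$ gives $|V_H|\leq |V_G|$.  By Proposition~\ref{prop:shomo-core}, every finite digraph is its own VS-core and S-core, so the elements of the order are all digraphs up to isomorphism.  Hence $\uparrow G$ in $(\DiGraphs,\VSurLeq)$ consists of digraphs on at most $|V_G|$ vertices, and there are at most $2^{|V_G|^2}$ of them for each vertex count.  Every surjective homomorphism is vertex-surjective, so the same bound applies to $\SurLeq$.

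For $\ESurLeq$ one extra step is needed, because edge-surjectivity alone does not control isolated vertices of the target.  By Proposition~\ref{prop:shomo-core}, we may restrict to ES-cores, that is, digraphs without isolated vertices.  Given an edge-surjective $f:G\to H$ with $H$ an ES-core, every vertex of $H$ lies on some edge $(u,v)\in E_H$.  That edge has a preimage edge in $G$, so every vertex of $H$ is in the image of $f$.  Thus $f$ is in fact surjective, and $|V_H|\leq |V_G|$ as before, together with $|E_H|\leq |E_G|$.  The up-set of an ES-core $G$ is therefore contained in the finite set of digraphs on at most $|V_G|$ vertices.

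The only step that is not completely routine is this reduction for ES, where we must pass to cores so that isolated vertices cannot inflate the target.  Once it is observed that edge-surjective maps onto digraphs without isolated vertices are vertex-surjective, the statement for all three orders follows from the same finiteness count.
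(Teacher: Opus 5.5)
Your proposal is correct and follows the paper's argument. A vertex-surjective map bounds $|V_H|\le|V_G|$ for VS, every surjective map is vertex-surjective for S, and ES reduces to S on ES-cores because an edge-surjective map onto a digraph without isolated vertices is vertex-surjective. The only difference is that you spell out this last step explicitly, whereas the paper takes it from the remark following Proposition~\ref{prop:shomo-core}.
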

\begin{proof}
To see that $(\DiGraphs,\VSurLeq)$ is future-finite, fix graph $G$ and consider its up-set.
Since the homomorphism is vertex surjective, any graph $H$
in the up-set of $G$ has no more vertices than the graph $G$.

$(\DiGraphs,\ESurLeq)$ is a suborder of $(\DiGraphs,\SurLeq)$, i.e. a non-induced suborder of $(\DiGraphs,\VSurLeq)$.
\end{proof}
\begin{prop}[Universality]
The following partial orders $(\DiCycle,\SurLeq)$, $(\DiCycle,\ESurLeq)$ and $(\DiCycle,\VSurLeq)$ are all future-finite-universal.
\end{prop}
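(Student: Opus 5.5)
The plan is to reduce all three orders on $\DiCycle$ to the reversed divisibility order and then invoke Proposition~\ref{prop:setdiviuniv}~b), in the same way Theorem~\ref{thm:cycles} was derived from part~c).

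\textbf{Step 1: the three relations coincide on directed cycles.} Let $f:\overrightarrow{C}_k\to\overrightarrow{C}_l$ be any homomorphism, with vertices labelled $0,\dots,k-1$ and $0,\dots,l-1$. The edge $(i,i+1)$ must go to an edge, so
\[
f(i+1)=f(i)+1 \pmod l .
\]
Hence $f(i)=f(0)+i \pmod l$. Going once around $\overrightarrow{C}_k$ forces $k\equiv 0\pmod l$. Conversely, whenever $l\mid k$ this formula defines a homomorphism. Its image runs through all residues mod $l$ and all edges $(j,j+1)$, so every such homomorphism is automatically vertex-surjective and edge-surjective.

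Consequently, for $k,l\geq 3$ the following are all equivalent:
\[
\overrightarrow{C}_k\Hom\overrightarrow{C}_l,\quad
\overrightarrow{C}_k\SurHom\overrightarrow{C}_l,\quad
\overrightarrow{C}_k\VSurHom\overrightarrow{C}_l,\quad
\overrightarrow{C}_k\mathbin{\xrightarrow{_{ES}}}\overrightarrow{C}_l,\quad
k\mathrel{\overleftarrow|} l .
\]

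\textbf{Step 2: the cycles are their own cores.} By Proposition~\ref{prop:shomo-core}, every directed cycle is an S-core and a VS-core. It is also an ES-core, since it has no isolated vertices. Distinct lengths give non-isomorphic cores. Therefore each of the three orders on $\DiCycle$ is isomorphic to $(\{k\in\mathbb N: k\geq 3\},\overleftarrow|)$.

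\textbf{Step 3: future-finite-universality.} The only point needing care, and the main (minor) obstacle, is that Proposition~\ref{prop:setdiviuniv}~b) concerns all of $\mathbb N$, including $1$ and $2$, whereas cycle lengths are at least $3$. The underlying argument embeds $(\Pfin(\mathbb P),\supseteq)$ via $X\mapsto\prod X$, and the empty set would give $1$. I would repair this as follows.

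Let $\mathbb P'$ be the odd primes at least $5$; it is still countably infinite. By Corollary~\ref{cor:pastfuturefiniteuniv}~(ii), $(\Pfin(\mathbb P'),\supseteq)$ is future-finite-universal. Use the map
\[
X\mapsto 3\prod X .
\]
This value is always at least $3$. By unique factorization, $3\prod X$ divides $3\prod Y$ if and only if $X\subseteq Y$, so the map is an order embedding into $(\{k\geq 3\},\overleftarrow|)$.

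Composing with $k\mapsto\overrightarrow{C}_k$ embeds every future-finite order into each of $(\DiCycle,\SurLeq)$, $(\DiCycle,\ESurLeq)$ and $(\DiCycle,\VSurLeq)$. This proves the proposition.
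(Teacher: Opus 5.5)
Your proof is correct and follows essentially the paper's route: the paper notes that every homomorphism between directed cycles is automatically vertex- and edge-surjective, and then appeals to the divisibility argument behind Theorem~\ref{thm:cycles} and Proposition~\ref{prop:setdiviuniv}. Your map $X\mapsto 3\prod X$ over primes at least $5$ makes explicit a detail the paper leaves implicit, namely that the image must avoid the excluded lengths $1$ and $2$.
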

\begin{proof}
Observe that any homomorphism between two oriented cycles is also a surjective homomorphism.
The universality follows from Theorem~\ref{thm:cycles}.
\end{proof}
\begin{prop}[Gaps]
\label{prop:surgaps}
$(G,H)$ is a VS-gap if $|G|=|H|$ and $G$ is created from $H$ by removing an edge or
if $|G|=|H|+1$, $G\VSurHom H$ and there is no way to add an edge to $G$ preserving this property.

$(G,H)$ is a S-gap if $|G|=|H|+1$, $|E_G|=|E_H|$, $G\SurHom H$.

ES-gaps corresponds to S-gaps for pairs of graphs with no isolated vertices.
\end{prop}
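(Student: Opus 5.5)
The plan is to exploit the fact that the vertex count is monotone along $\VSurLeq$ and $\SurLeq$. I will show that every surjective homomorphism can be factored into elementary steps, namely adding one edge on a fixed vertex set or identifying two vertices, so that a gap has to be a single elementary step. Throughout, a gap $(G,H)$ means $G\VSurHom H$ and $H\not\VSurHom G$ with nothing strictly in between. I use Proposition~\ref{prop:shomo-core}, which says that nonisomorphic graphs are never equivalent. So any intermediate graph that is not isomorphic to an endpoint is strictly between.

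\emph{Reduction to $|V_G|\le |V_H|+1$ (VS and S).} Let $f:G\VSurHom H$ with $|V_G|\ge|V_H|+2$. Pick $u\neq v$ with $f(u)=f(v)$. Let $G'$ be $G$ with $u,v$ identified, keeping all image edges. Then $G\to G'$ is a surjective homomorphism and $f$ factors through it as $G'\to H$. This second map is still vertex-surjective, and edge-surjective whenever $f$ is. Since $|V_G|>|V_{G'}|>|V_H|$, the graph $G'$ is strictly between, so there is no gap.

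\emph{VS, case $|V_G|=|V_H|$.} Here $f$ is a bijective homomorphism, so up to isomorphism $G$ is a spanning subgraph of $H$. Any intermediate $K$ has the same number of vertices and is sandwiched by edge count. Hence $(G,H)$ is a gap exactly when $|E_H|=|E_G|+1$.

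\emph{VS, case $|V_G|=|V_H|+1$.} An intermediate $K$ has either $|V_K|=|V_G|$ or $|V_K|=|V_H|$. In the first case $K$ is $G$ plus edges, and then $G+e\VSurHom H$ for some added edge $e$. In the second case $K$ is a proper spanning subgraph of $H$ missing some edge $(a,b)$, and $G\VSurHom K$. Take $x\in f^{-1}(a)$ and $y\in f^{-1}(b)$; loops are allowed, so $x=y$ is fine. The edge $(x,y)$ is not in $G$, since its image is not in $K$. Therefore $G+(x,y)\VSurHom H$. So the second case also produces an addable edge. Conversely, if $G+e\VSurHom H$, then $G+e$ is strictly between $G$ and $H$. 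This gives exactly the stated criterion.

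\emph{S.} If $|V_G|=|V_H|$, a surjective $f$ is a bijection with $|E_G|\ge|E_H|$ and is edge-surjective, so it is an isomorphism. Thus gaps need $|V_G|>|V_H|$, and by the reduction $|V_G|=|V_H|+1$. If moreover $|E_G|=|E_H|$, then any intermediate $K$ satisfies $|E_G|\ge|E_K|\ge|E_H|$ and $|V_K|\in\{|V_G|,|V_H|\}$. Counting vertices and edges forces both maps to be bijective on the relevant side, hence isomorphisms, so we have a gap. The main obstacle is the converse: when $|E_G|>|E_H|$ one must build an intermediate graph. Let $x,y$ be the unique pair with $f(x)=f(y)=w$; the map $f$ is injective on edges away from $x,y$. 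Some edge of $H$ therefore has two preimages, typically $(x,z)$ and $(y,z)$, or loops and edges between $x$ and $y$. I would first try to pass to a quotient: $G\to K$ folds one such redundant edge onto another by a homomorphism that fixes all vertices except possibly swapping roles. Failing that, I would try choosing $K$ on $V_G$ as a carefully selected homomorphic image in which $x$ and $y$ share fewer edges. The ES statement then follows by restricting to graphs without isolated vertices, where ES and S coincide (Proposition~\ref{prop:shomo-core}).
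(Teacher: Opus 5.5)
Your argument for the stated sufficient conditions is correct and follows the paper's route. Identifying two vertices of one fibre reduces to $|V_G|\le|V_H|+1$. An equal-size VS map is a bijective monomorphism, so those VS-gaps are single-edge deletions. An equal-size S map is an isomorphism. In the case $|V_G|=|V_H|+1$ for VS, you pull a missing edge $(a,b)$ of an intermediate $K$ back to an addable edge $(x,y)$ of $G$; this supplies a detail the paper only asserts.

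Drop your final paragraph. The converse you call the main obstacle is not part of the statement, and it is false. Your own first observation in the S paragraph settles the matter. A surjective homomorphism that is bijective on vertices is an isomorphism. Hence any $K$ with $G\SurHom K\SurHom H$ and $|V_K|\in\{|V_G|,|V_H|\}$ is isomorphic to $G$ or to $H$, and edge counts never enter. So every pair with $|V_G|=|V_H|+1$ and $G\SurHom H$ is already an S-gap, and the hypothesis $|E_G|=|E_H|$ is merely sufficient. For example, two looped vertices mapped onto one looped vertex form an S-gap with $|E_G|>|E_H|$. So does the path on three vertices folded onto $K_2$, which is also an ES-gap. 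No intermediate graph can be built in these cases, so the quotient constructions you sketch cannot succeed.

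For the ES part, state one more fact explicitly: a surjective image of a graph without isolated vertices has no isolated vertices. This guarantees that any graph strictly between $G$ and $H$ in the S-order is itself an ES-core, so ES-gaps and S-gaps really coincide on such pairs.
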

\begin{proof}
It is easy to see that in all cases $|G|$ is at most $|H|+1$ vertices.
Otherwise graph in between can be constructed by partly concatenating vertices
of $G$ as given by the surjective homomorphism $G\SurHom H$. The extra condition
given prevents existence of a graph in between $G$ and $H$ in this case.

For vertex surjective mapping, there are gaps between $(G,H)$, $|G|=|H|$: the
mapping must be monomorphism. If the graphs $G$ and $H$ differs by precisely one
edge, they represent a gap in the monomorphism order.
In the edge surjective case however the
mapping must be embedding and there are no gaps in the embedding order such that
$|G|=|H|$.
\end{proof}
\begin{prop}[Dualities]
\label{prop:surdual}
For every finite set of directed graphs $\mathcal{F}$ there are finite sets of directed graphs
$\mathcal{D}_S$, $\mathcal{D}_{VS}$ and $\mathcal{D}_{ES}$, such that $(\mathcal{F},\mathcal{D}_S)$ is a generalized finite S-duality pair,
$(\mathcal{F},\mathcal{D}_{VS})$ is a generalized finite VS-duality pair, and
$(\mathcal{F},\mathcal{D}_{ES})$ is a generalized finite ES-duality pair.
\end{prop}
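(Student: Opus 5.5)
The plan is to reduce everything to Proposition~\ref{prop:futurefiniteduals}, which applies because the three orders are future-finite by Proposition~\ref{prop:shomo-univ}. I treat $\SurLeq$ in detail; $\VSurLeq$ is identical, and $\ESurLeq$ is handled by restricting to ES-cores, i.e.\ graphs without isolated vertices (Proposition~\ref{prop:shomo-core}). Without loss of generality I replace each member of $\mathcal{F}$ by its core; this changes nothing for S and VS and only deletes isolated vertices for ES. Put $N=\max_{F\in\mathcal{F}}|V_F|$, $X=\bigcup_{F\in\mathcal{F}}\uparrow F$ and $B=P\setminus X$. By Proposition~\ref{prop:futurefiniteduals}, it suffices to show that $B$ has only finitely many maximal elements. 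I will do this by bounding the number of vertices of every maximal element of $B$ by $N+1$; since there are only finitely many directed graphs on at most $N+1$ vertices, this gives the result.

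The key tool is vertex identification. For a graph $G$ and distinct vertices $u,v$, let $G/uv$ be the quotient obtained by identifying $u$ and $v$, where edges are mapped to their images (loops are allowed, so this is a graph). The quotient map is a surjective homomorphism: it is vertex-surjective, and it is edge-surjective by construction. Hence $G\SurHom G/uv$, and the same holds for VS. Moreover, $G$ has no isolated vertices if and only if $G/uv$ has none, so the construction stays inside the class of ES-cores. Since $G/uv$ has fewer vertices than $G$, and every graph is its own S- and VS-core (Proposition~\ref{prop:shomo-core}), the two graphs are not equivalent. Therefore $G<G/uv$ strictly.

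Now let $m$ be a maximal element of $B$ with $|V_m|\geq 2$. Pick any two distinct vertices $u,v$ of $m$. Then $m<m/uv$. Since $m$ is maximal in $B$ and $B$ is a down-set, $m/uv\notin B$, so $m/uv\in X$. Thus some $F\in\mathcal{F}$ maps surjectively onto $m/uv$, which gives $|V_{m/uv}|\leq|V_F|\leq N$, and hence $|V_m|\leq N+1$. The remaining maximal elements of $B$ have at most one vertex, and there are only finitely many of those. This yields the bound.

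The only point I expect to need care is the ES case. There I must check that strictness of $m<m/uv$ survives in the order on ES-cores, and that $\mathcal{D}_{ES}=\max(B)$ consists of ES-cores. Both follow because identification preserves the absence of isolated vertices, and because distinct ES-cores are inequivalent. The sets $\mathcal{D}_S$, $\mathcal{D}_{VS}$ and $\mathcal{D}_{ES}$ are then taken to be the respective sets $\max B$, as in Proposition~\ref{prop:futurefiniteduals}.
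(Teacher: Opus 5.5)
Your proof is correct. It reaches the same bound as the paper (every element of $\max B$ has at most $N+1$ vertices), but by a more direct route.

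\textbf{The paper's route.} It combines Proposition~\ref{prop:futurefiniteduals} with the gap characterization of Proposition~\ref{prop:surgaps}. A maximal element of $B=P\setminus X$ is either maximal in $P$ or the lower endpoint of a gap whose upper endpoint lies in $X$. Gaps in these orders change the vertex count by at most one, so the lower endpoint has at most $N+1$ vertices.

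\textbf{Your route.} You bypass gaps entirely. For any $m$ with at least two vertices you exhibit a strictly larger element $m/uv$ with exactly one vertex fewer. Maximality of $m$ in $B$ then forces $m/uv\in X$.

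\textbf{Comparison.} Your argument is the vertex-identification idea behind the ``at most $|H|+1$ vertices'' part of Proposition~\ref{prop:surgaps}. However, you never need $m/uv$ to be an immediate successor of $m$. This lets you skip the finer, only sketched, case analysis of gaps, such as the edge-deletion gaps in the VS-order. As a result your argument is self-contained and uniform for S, VS and ES. The paper's formulation instead gives the conceptual picture: the dual elements are the maximal elements of $P$ together with the lower ends of gaps entering $X$. This is the same mechanism it reuses for the other one-sided finite orders.

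\textbf{One small inaccuracy.} Identification preserves the absence of isolated vertices only in the forward direction. If $u$ is isolated and $v$ is not, then $m/uv$ can be free of isolated vertices while $m$ is not, so your ``if and only if'' is false. The forward direction is the only one you use, so the argument is unaffected.
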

\begin{proof}
Again the existence of dualities follows from the future-finite duality criterion, Proposition~\ref{prop:futurefiniteduals}, and from the characterization of gaps (Proposition~\ref{prop:surgaps}).
Observe that a single vertex with a loop on it and the empty graph are the only maximal elements of the vertex surjective homomorphism
order.  In edge surjective homomorphisms there are three maximal elements; single vertex, vertex with a loop and the empty graph.
For the S-order and, after deleting isolated vertices, for the ES-order, the relevant gaps differ by one vertex; the VS-order also has the edge-deletion gaps described in Proposition~\ref{prop:surgaps}.
\end{proof}

Note that loops have to be allowed to make finite dualities possible: 
for graphs without loops the set $\mathcal{D}$ would need to contain cliques of
arbitrary sizes, because all cliques are maximal elements in surjective homomorphism
order on (directed) graphs without loops. Thus there are no dualities in the case of
loopless graphs.

\section{Locally constrained homomorphisms}
\label{sec:locally-constrained}

Locally constrained homomorphisms have a topological origin: locally bijective homomorphisms, called graph covers, are a discrete variant of covering projections between topological spaces. These were studied already in the 1930s; see the monograph of Reidemeister~\cite{k:Reidemeister32}. 

Locally bijective homomorphisms were involved in constructions of highly transitive graphs~\cite{k:Biggs74} or classification of maps~\cite{n:MNS02}. In 1980 Angluin and later Leighton~\cite{n:Angluin80, Leighton1982} showed a construction of a common cover (see Corollary~\ref{cor:singleclass} (iv) below), which can be viewed as the first order-related result.

Locally injective homomorphisms (sometimes called partial covers) and locally surjective mappings (known as role assignments) are 
straightforward generalizations of locally bijective ones. All three are defined by a particular behavior on the neighborhoods of vertices and thus share many common properties. There are also a number of other non-trivial relations between the three kinds of homomorphisms. See \cite{Fiala2008} for a survey of this area; for more recent algorithmic developments see \cite{Bulteau2024Algorithmic,Dvorak2022ListLS}.  Graph covers themselves also continue to be studied in connection with covering and semi-covering problems~\cite{KratochvilNedela2025CoversSemicovers}.
The order of locally constrained homomorphisms was studied in \cite{Fiala2005}. We review and extend these results.

It is well known that the existence of a locally bijective homomorphism between two directed graphs
can be decided by comparing two undirected graphs obtained from the directed ones, and conversely~\cite{Welzl1982}. 
To our knowledge, locally injective and locally surjective homomorphisms have not been considered for
directed graphs; however, a construction that works for locally bijective homomorphisms may work as well.

We first focus on properties of locally constrained homomorphism orders on 
the class $\ConnGraph$, containing all finite connected undirected graphs. 
Throughout this section, unless explicitly stated otherwise, graphs in $\ConnGraph$ are finite, undirected and loopless, and $N_G(v)$ denotes the usual open neighborhood. 
Note that the partial orders induced on $\ConnGraph$ may maintain some properties that are not preserved on $\Graphs$. 
For example, one can observe that locally surjective homomorphism between two connected graphs is also surjective 
(globally, i.e. on the whole vertex set), while this might be violated when the target graph is disconnected.

The following results show the main correspondence between all three types of locally constrained homomorphisms.

\begin{thm}[Fiala, Maxov\'a \cite{Fiala2006}]
If a graph $G$ admits a locally injective homomorphism $f$ to a finite and connected graph $H$ as well as a locally surjective homomorphism $g$ to $H$, then all locally constrained homomorphisms between $G$ and $H$ are locally bijective.
\end{thm}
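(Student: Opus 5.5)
The plan is a spectral argument via Perron--Frobenius. Let $\lambda=\rho(A_H)$ be the spectral radius of the adjacency matrix of $H$. Since $H$ is connected, $A_H$ is irreducible, so it has a strictly positive Perron vector $p$ with $A_Hp=\lambda p$. For any map $h:V_G\to V_H$ I pull $p$ back to the positive vector $u^h$ defined by $u^h_v=p_{h(v)}$. I then compare $A_Gu^h$ with $\lambda u^h$ coordinatewise, separately on each connected component $C$ of $G$ (the matrix $A_C$ is irreducible even though $G$ itself need not be connected). The tool is the Collatz--Wielandt / subinvariance principle for a nonnegative irreducible matrix $M$ and a positive vector $u$:
\begin{itemize}
\item if $Mu\ge\lambda u$, then $\rho(M)\ge\lambda$;
\item if $Mu\le\lambda u$, then $\rho(M)\le\lambda$;
\item in either case, equality $\rho(M)=\lambda$ forces $Mu=\lambda u$.
\end{itemize}

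\textbf{Locally injective maps.} Let $h$ be locally injective and $v\in V_C$. Then $h$ maps $N_G(v)$ injectively into $N_H(h(v))$, and $p>0$, so
\[
(A_Cu^h)_v=\sum_{w\in N_G(v)}p_{h(w)}\le\sum_{y\in N_H(h(v))}p_y=\lambda u^h_v .
\]
Hence $\rho(A_C)\le\lambda$. Moreover, if $\rho(A_C)=\lambda$, equality must hold at every $v$. Because every $p_y$ is strictly positive, equality at $v$ means the injection $N_G(v)\to N_H(h(v))$ misses nothing, i.e.\ it is a bijection.

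\textbf{Locally surjective maps.} Let $h$ be locally surjective. Each $y\in N_H(h(v))$ has at least one preimage in $N_G(v)$, so the same sum satisfies $(A_Cu^h)_v\ge\lambda u^h_v$, and therefore $\rho(A_C)\ge\lambda$. If $\rho(A_C)=\lambda$, then again equality holds at every $v$. Positivity of $p$ now forces each $y$ to have exactly one preimage in $N_G(v)$, so $h$ is again locally bijective at $v$.

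\textbf{Combining.} Applying the locally injective bound to $f$ and the locally surjective bound to $g$ on the same component $C$ gives $\lambda\le\rho(A_C)\le\lambda$, so $\rho(A_C)=\lambda$ for every component $C$ of $G$. Now take an arbitrary locally injective or locally surjective homomorphism $h:G\to H$. On each component we have $\rho(A_C)=\lambda$, so the equality case above shows that $h$ is locally bijective at every vertex of $C$, and hence on all of $G$. This proves the statement.

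The main obstacle is making the Perron--Frobenius equality case airtight. For irreducible $M$, let $q>0$ be a left Perron vector of $M$. If $Mu\le\lambda u$ and $\rho(M)=\lambda$, then $q^{T}(\lambda u-Mu)=\lambda q^{T}u-\rho(M)q^{T}u=0$; since $\lambda u-Mu\ge0$ and $q>0$, this forces $Mu=\lambda u$. The case $Mu\ge\lambda u$ is symmetric. The same pairing with $q$ also gives the inequalities $\rho(M)\le\lambda$ and $\rho(M)\ge\lambda$ themselves.

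Two smaller points need checking. First, we need $\lambda>0$ and $p$ strictly positive; this holds because $H$ is connected and has at least one edge. If $H=K_1$, every homomorphism from the loopless graph $G$ to $H$ is trivially locally bijective, which settles the degenerate case. Second, we must work componentwise, since $G$ is not assumed connected. The restriction of a locally injective (resp.\ locally surjective) map to a component is still locally injective (resp.\ locally surjective), because the local conditions only involve neighbourhoods, which lie inside a component.
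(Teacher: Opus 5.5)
The paper gives no proof of this theorem. It is quoted from \cite{Fiala2006} and immediately rephrased as Theorem~\ref{thm:intersect}, so there is no in-paper argument to compare against, and your proposal has to stand on its own. It does: the argument is correct.

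The paper's standing convention is that all graphs in Section~\ref{sec:locally-constrained} are finite, and under it each step goes through:
\begin{itemize}
\item Pulling back the Perron vector of $H$ turns local injectivity into $A_Cu\le\lambda u$ and local surjectivity into $A_Cu\ge\lambda u$ on every component $C$.
\item Pairing with a positive left Perron vector of $A_C$ gives both inequalities and the equality case.
\item The two given maps $f,g$ force $\rho(A_C)=\lambda$ on every component.
\item Strict positivity of $p$ then converts equality into local bijectivity of an arbitrary locally injective or locally surjective map.
\end{itemize}

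Compared with the tools the paper does use in this section (degree refinement matrices and Theorems~\ref{thm:degeq1} and~\ref{thm:degeq2}), your invariant is the spectral radius rather than $\drm$. This handles disconnected $G$ cleanly, componentwise. The same equality analysis also recovers Theorems~\ref{thm:degeq1} and~\ref{thm:degeq2} for finite $G$. Equal degree refinement matrices give $\rho(A_C)=\rho(A_H)$ on every component, because the Perron eigenvector of the irreducible quotient matrix lifts to a positive eigenvector of $A_C$.

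Two small remarks. First, your argument needs every component of $G$ to be finite. The paper's conventions guarantee this, but the Perron--Frobenius step would have to be replaced if one wanted $G$ infinite, which the phrase ``finite and connected graph $H$'' might suggest. Second, if ``between $G$ and $H$'' is read symmetrically, maps $H\to G$ are covered by the same argument with the roles exchanged. Such a map lands in a single component $C$ of $G$, and you have already shown $\rho(A_C)=\lambda$ there.
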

Or in the language of homomorphism orders we get:
\begin{thm}[Fiala, Paulusma, Telle \cite{Fiala2005}]
\label{thm:intersect}
Partial order $(\ConnGraph,\LocBiLeq)$ is the intersection of partial orders $(\ConnGraph,\LocSurLeq)$ and $(\ConnGraph,\LocInLeq)$.
\end{thm}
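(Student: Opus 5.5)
The statement says that for connected graphs $G,H$ we have $G\LocBiLeq H$ if and only if both $G\LocSurLeq H$ and $G\LocInLeq H$ hold. Since on $\ConnGraph$ every connected graph is a core for all three constraints, it suffices to prove this for the existence relations $\LocBiHom$, $\LocInHom$ and the locally surjective one.

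The forward implication is immediate. A locally bijective homomorphism is, by definition, both locally injective and locally surjective. So $G\LocBiHom H$ yields both relations, witnessed by the same map.

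For the converse, suppose $f:G\to H$ is locally injective and $g:G\to H$ is locally surjective, with $H$ finite and connected. We invoke the theorem of Fiala and Maxov\'a stated just above. Its conclusion is that every locally constrained homomorphism between $G$ and $H$ is locally bijective, so in particular $f$ (or $g$) is locally bijective. Hence $G\LocBiHom H$, which gives the intersection property.

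The only point needing care is that the three relations are compared on the same representatives. This is where we use that every connected graph is its own core in each of these orders. That fact follows from the finiteness and connectedness arguments that the paper states for $\ConnGraph$, and it lets us pass from existence relations to orders. If one prefers not to rely on it, the equivalence of the quasi-orders already implies the equality of the induced partial orders.

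We expect no real obstacle, since the substantive content lies in the Fiala--Maxov\'a theorem. Should a self-contained argument be wanted, the key step would be to reprove that theorem. Local surjectivity of $g$ gives $\deg_G(v)\ge\deg_H(g(v))$ for every vertex $v$, and local injectivity of $f$ gives $\deg_G(v)\le\deg_H(f(v))$. The plan would then be a counting or degree-refinement argument, using connectivity of $H$, showing that both maps preserve the equitable degree partitions. From this, all these degree inequalities are equalities, so $f$ and $g$ are bijective on neighborhoods. That counting step would be the hard part, and we would attack it via the degree-refinement matrices of $G$ and $H$.
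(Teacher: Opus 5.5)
Your proposal is correct and takes the paper's approach. The paper gives no separate proof: it introduces this theorem as the order-theoretic restatement of the Fiala--Maxov\'a theorem stated just before it. The forward inclusion is trivial, the reverse inclusion is exactly that theorem, and the three orders share the same representatives because every connected graph is a core in each of them. Your fallback sketch via degree inequalities is only heuristic, since $f$ and $g$ are different maps, but the main argument does not depend on it.
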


\subsection{Common properties}

Consider a locally bijective homomorphism $f:G\LocBiHom H$.  By definition, for every vertex $x$, the mapping $f$ induces an isomorphism from the neighborhood $N_G(x)$ of $x$, to the neighborhood $N_H(f(x))$ of $f(x)$. 
Consequently $x$ and $f(x)$ have the same degree. Moreover, the image $f(x')$ of a vertex $x'\in N_G(x)$ must be a vertex of the same degree in the neighborhood of $f(x)$. It follows that any locally bijective homomorphism preserves not only the degrees of vertices, but also the degrees of neighborhood vertices and so on. This property is captured by the notion of degree refinement matrices.

A partition of the vertex set of a graph $G$ into disjoint classes is called an {\em equitable partition} if the vertices in the same class have the same numbers of neighbors in all classes of the partition.

Any equitable partition is characterized by the associated {\em degree matrix} whose rows and columns are indexed by the blocks of the partition, and the entry in the $i$-th row and $j$-th column describes how many neighbors a vertex of the $i$-th block has in the $j$-th block.

Every finite graph $G$ admits a unique minimal equitable partition. In this case a canonical ordering can be imposed on the blocks, so the corresponding degree matrix, called the {\em degree refinement matrix}, $\drm(G)$, is also defined uniquely, for formal details see a survey~\cite{Fiala2008}.

In the sequel we utilize the following folklore necessary conditions~\cite{Fiala2008}:

\begin{prop}\label{prop:folklorecover}
If $G\LocBiHom H$ with $G,H \in \ConnGraph$, then
\begin{itemize}
	\item $\drm(G)=\drm(H)$, and
	\item $|V_G|$ is an integer multiple of $|V_H|$.
\end{itemize}
\end{prop}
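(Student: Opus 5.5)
The plan is to prove both items by exploiting that a locally bijective homomorphism $f:G\LocBiHom H$ between connected graphs is a covering map. From this we get two facts: the preimage of the minimal equitable partition of $H$ is an equitable partition of $G$ with the same degree matrix, and all fibres $f^{-1}(y)$ have the same size.

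\textbf{Degree refinement matrices.} Let $\mathcal{B}=\{B_1,\dots,B_k\}$ be the minimal equitable partition of $H$, and pull it back to $\mathcal{A}=\{f^{-1}(B_1),\dots,f^{-1}(B_k)\}$ on $V_G$. For $x\in V_G$, $f$ restricts to a bijection from $N_G(x)$ onto $N_H(f(x))$. Hence the number of neighbours of $x$ in $f^{-1}(B_j)$ equals the number of neighbours of $f(x)$ in $B_j$. So $\mathcal{A}$ is equitable with the same degree matrix as $\mathcal{B}$ (empty classes cannot occur once surjectivity is shown below).

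It remains to see that the \emph{minimal} equitable partitions correspond. I would use the standard characterization of the coarsest equitable partition as the limit of iterated degree refinement: start with the trivial partition and repeatedly split classes by the vector of neighbour counts into the current classes. By induction on the number of rounds, $x$ and $x'$ in $G$ lie in the same class after round $t$ iff $f(x)$ and $f(x')$ do in $H$. The inductive step again uses that $f$ bijects neighbourhoods, so the neighbour-count vectors of $x$ and $f(x)$ coincide at each round. Consequently the stable partitions correspond under $f^{-1}$ with identical block matrices. The canonical ordering of blocks is defined from the matrix and refinement history alone, so $\drm(G)=\drm(H)$.

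\textbf{Divisibility.} First, $f$ is surjective: its image is nonempty, and it is closed under taking neighbours because $f$ maps $N_G(x)$ onto $N_H(f(x))$. Since $H$ is connected, the image is all of $V_H$. Next, for an edge $yy'$ of $H$, every $x\in f^{-1}(y)$ has exactly one neighbour in $f^{-1}(y')$, and symmetrically. So the edges between the two fibres form a perfect matching, giving $|f^{-1}(y)|=|f^{-1}(y')|$. Connectivity of $H$ then makes all fibres equal in size, say $m$, and so $|V_G|=m|V_H|$.

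The main obstacle is the first item. One has to ensure that the pulled-back partition is not merely \emph{an} equitable partition of $G$ but exactly the minimal one. This is handled by running the refinement procedure simultaneously in $G$ and $H$ rather than arguing abstractly about coarseness.
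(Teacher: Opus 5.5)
Your proof is correct and takes essentially the same approach as the paper. The paper gives no formal proof: it only remarks before the proposition that a locally bijective homomorphism preserves degrees, degrees of neighbours, and so on, and refers to the survey of Fiala and Kratochv\'{\i}l for details. Your simultaneous run of the refinement procedure in $G$ and $H$ is a rigorous version of that remark, and your fibre argument for the second item (surjectivity from the connectivity of $H$, then a perfect matching between $f^{-1}(y)$ and $f^{-1}(y')$ for every edge $yy'$) is the standard folklore proof that the paper leaves implicit.
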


We put $G\Matrixeq H$ if and only if $\drm(G)=\drm(H)$. The relation $\Matrixeq$ is an equivalence relation on the class of finite graphs, and by the proposition the relation $\LocBiHom$ is a sub-relation of $\Matrixeq$ on $\ConnGraph$.

Moreover, the equivalence classes $\K$ of $(\ConnGraph,\Matrixeq)$ could be categorized as follows:
If $\K$ contains a tree, then $\K$ is trivial (i.e. having only a single graph up to an isomorphism).
Otherwise $\K$ contains infinitely many nonisomorphic graphs.

The connections between $\Matrixeq$ and all three variants of locally constrained homomorphisms are captured by the following two results.

\begin{thm}[Fiala, Kratochv\'il \cite{Fiala2001}]
\label{thm:degeq1}
If a graph $G$ and a connected graph $H$ share the same degree matrix, then every locally injective homomorphism is locally bijective.
\end{thm}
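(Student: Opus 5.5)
The plan is a Perron--Frobenius weighting argument. We use the common degree refinement matrix $M=\drm(G)=\drm(H)$ to assign positive weights to the vertices of both graphs. Then we show that local injectivity forces a certain inequality to be tight, and tightness is exactly local surjectivity.

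\textbf{Step 1 (weights on $H$).} Index the blocks of the minimal equitable partitions of $G$ and $H$ by the common row and column set of $M$. Since $H$ is connected, every block of $H$ can be reached from every other along edges, so the nonnegative integer matrix $M$ is irreducible. By the Perron--Frobenius theorem there is $\lambda>0$ and a strictly positive vector $w$ with $Mw=\lambda w$. For $v\in V_H$ in block $j$ put $\omega_H(v)=w_j$. Equitability gives
\[
\sum_{v'\in N_H(v)}\omega_H(v')=\sum_k M_{jk}w_k=\lambda\,\omega_H(v),
\]
i.e. $A_H\omega_H=\lambda\omega_H$. In the same way $\omega_G(u)=w_{i}$, for $u$ in block $i$ of $G$, is a strictly positive vector with $A_G\omega_G=\lambda\omega_G$. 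This uses only that $G$ has the same matrix $M$; connectivity of $G$ is not needed.

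\textbf{Step 2 (pull back and compare).} Let $f:G\to H$ be locally injective and put $\varphi(u)=\omega_H(f(u))>0$ for $u\in V_G$. Because $f$ maps $N_G(u)$ injectively into $N_H(f(u))$ and all weights are positive,
\[
(A_G\varphi)(u)=\sum_{u'\in N_G(u)}\omega_H(f(u'))\le\sum_{v'\in N_H(f(u))}\omega_H(v')=\lambda\varphi(u).
\]
Hence the vector $\lambda\varphi-A_G\varphi$ is entrywise nonnegative. Pairing it with $\omega_G$ and using the symmetry of $A_G$ (the graphs are undirected) gives
\[
\omega_G^{T}(\lambda\varphi-A_G\varphi)=\lambda\,\omega_G^T\varphi-(A_G\omega_G)^T\varphi=0 .
\]
Since $\omega_G$ is strictly positive, every entry of $\lambda\varphi-A_G\varphi$ must vanish.

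\textbf{Step 3 (conclude).} Equality at $u$ says that the weight of $f(N_G(u))$ equals the total weight of $N_H(f(u))$. All weights $\omega_H(v')$ are positive and $f(N_G(u))\subseteq N_H(f(u))$, so $f(N_G(u))=N_H(f(u))$. Together with local injectivity, $f$ is bijective on every neighborhood, i.e. locally bijective.

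The only delicate point is Step~1: we must make sure $M$ is irreducible, so that the Perron vector is strictly positive, and that the same vector $w$ simultaneously serves as an eigenvector-weighting for $G$. The first follows from connectivity of $H$, and the second from equitability with the identical matrix $M$. The rest is the standard inner-product trick.
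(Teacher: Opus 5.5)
Your proof is correct and complete. The paper gives no proof of this statement; it only quotes the result from Fiala and Kratochv\'il. So there is no in-paper argument to compare against, and what you have written is a self-contained replacement.

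The key steps all hold:
\begin{itemize}
\item A path in $H$ between vertices of blocks $j$ and $k$ gives a walk from $j$ to $k$ in the digraph of $M$. Hence $M$ is irreducible and has a strictly positive right eigenvector $w$.
\item Lifting $w$ along the two equitable partitions gives positive $\lambda$-eigenvectors $\omega_G$ and $\omega_H$ of $A_G$ and $A_H$.
\item Local injectivity gives $A_G(\omega_H\circ f)\le\lambda(\omega_H\circ f)$ entrywise.
\item Pairing with $\omega_G$ and using the symmetry of $A_G$ forces equality at every vertex. By positivity of the weights, equality is exactly local surjectivity.
\end{itemize}
One harmless slip: $\lambda>0$ fails for $H=K_1$, where $M=(0)$. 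In that case $G$ is edgeless and the claim is trivial, and your argument only ever uses $w>0$, never $\lambda>0$.

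Your route has some advantages worth noting. It avoids universal covers and common-cover machinery. It does not need $G$ to be connected, which matches the statement as written.

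It also proves more than stated. The only property of $G$ you use is a strictly positive eigenvector of $A_G$ for the Perron root $\lambda$ of $H$. So the same argument shows the following: if $G$ and $H$ are connected and $\rho(A_G)=\rho(A_H)$, then every locally injective homomorphism $G\LocInHom H$ is locally bijective. This hypothesis is strictly weaker than $\drm(G)=\drm(H)$. The degree refinement matrix determines the spectral radius, as the Perron root of $M$, but not conversely; for example, $C_5$ and $K_{1,4}$ both have spectral radius $2$.

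Finally, the first half of your Step~2 alone gives $\rho(A_G)\le\rho(A_H)$ for every locally injective $G\LocInHom H$ with $H$ connected, by the subinvariance principle for nonnegative matrices. This monotonicity is the underlying mechanism of the theorem.
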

\begin{thm}[Kristiansen, Telle \cite{Kristiansen2000}]
\label{thm:degeq2}
If a graph $G$ and a connected graph $H$ share the same degree matrix, then every locally surjective homomorphism is locally bijective.
\end{thm}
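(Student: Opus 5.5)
The plan is a spectral (Perron--Frobenius) argument that compares the largest adjacency eigenvalue of $G$ with that of $H$. Let $f:G\to H$ be locally surjective. A locally bijective map is exactly one that is both locally surjective and locally injective, and for finite sets a surjection $N_G(x)\to N_H(f(x))$ is a bijection iff $|N_G(x)|=|N_H(f(x))|$. So it suffices to show $\deg_G(x)=\deg_H(f(x))$ for every $x\in V_G$. Both conditions are checked componentwise, so we may assume $G$ is connected. This is harmless: for each component $G'$ of $G$, the restriction of $f$ is still locally surjective into $H$, and $\drm(G')=\drm(G)$, because the minimal equitable partition of $G$ restricts to an equitable partition of $G'$ with the same quotient matrix.

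First I will record the standard link between the degree matrix and the spectral radius. Let $M=\drm(H)$ with blocks $B_1,\dots,B_k$. Since $H$ is connected, $M$ is a nonnegative irreducible matrix. Take its Perron eigenvector $w>0$ with $Mw=\lambda w$, and lift it to $v(u)=w_i$ for $u\in B_i$. Equitability gives $A_Hv=\lambda v$ with $v>0$, so $\lambda=\lambda_{\max}(A_H)$ by Perron--Frobenius. The same computation in $G$, which has the same matrix $M$ on its own minimal equitable partition, gives $\lambda_{\max}(A_G)=\lambda$. So $G$ and $H$ have the same spectral radius.

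Next I pull back the eigenvector along $f$: define $y(x)=v(f(x))>0$ for $x\in V_G$. For each $x$, the image $f(N_G(x))$ covers $N_H(f(x))$, so
\[
(A_Gy)(x)=\sum_{x'\in N_G(x)} v(f(x'))\;\geq\; \sum_{u\in N_H(f(x))} v(u)=\lambda\, v(f(x))=\lambda\, y(x).
\]
Because all entries of $v$ are positive, the inequality is strict exactly at those $x$ where $f$ is not injective on $N_G(x)$, that is, where $\deg_G(x)>\deg_H(f(x))$.

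Finally I apply the Perron--Frobenius subinvariance lemma for the irreducible matrix $A_G$: if $y>0$ and $A_Gy\geq\lambda y$ with strict inequality in some coordinate, then $\lambda_{\max}(A_G)>\lambda$. (One way to see it is to take the positive left Perron vector $z$ and compare $z^{T}A_Gy$ with $\lambda z^Ty$.) This contradicts the equality of spectral radii established above. Hence equality holds everywhere, $f$ is locally injective as well, and so it is locally bijective.

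The main obstacle is the step identifying the common degree matrix with a common spectral radius. It requires the irreducibility of $M$ and the lifting of the quotient eigenvector, and both must be stated carefully so that they apply to $G$ once we have reduced to a connected component. Everything else is a routine one-line inequality.
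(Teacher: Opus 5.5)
The paper gives no proof of this statement; it quotes it from Kristiansen and Telle~\cite{Kristiansen2000} and uses it as a black box, so there is no in-paper argument to compare with. Your spectral argument is correct and self-contained. The steps are:
\begin{itemize}
\item lifting the Perron vector $w>0$ of the irreducible matrix $M=\drm(H)$ gives positive eigenvectors for $\lambda$ on both graphs;
\item the pulled-back vector $y=v\circ f$ satisfies $A_Gy\geq\lambda y$, with strict inequality exactly where $f$ fails to be locally injective;
\item pairing with a positive left eigenvector rules this out.
\end{itemize}
Connectivity of $H$ is used exactly where it must be, namely to make $w$ strictly positive. Without it the statement fails: take $G=H=C_4\cup K_{3,3}$, map $C_4$ identically, and map $K_{3,3}$ locally surjectively onto $C_4$.

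Two minor remarks. First, your justification of $\drm(G')=\drm(G)$ only shows that a component $G'$ carries an equitable partition with quotient $M$. All blocks meet $G'$ because $M$ is irreducible, but you do not show that this partition is the coarsest one. This is harmless, since the lift only needs some equitable partition with quotient $M$.

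Second, you can drop both the reduction to components and the subinvariance lemma. Let $v_G>0$ be the lift of $w$ to $G$. Then $A_Gv_G=\lambda v_G$ and $A_G$ is symmetric, so $v_G^{T}(A_G-\lambda I)y=0$. But $(A_G-\lambda I)y\geq 0$ is nonzero as soon as $f$ is not locally injective somewhere, which forces $v_G^{T}(A_G-\lambda I)y>0$. This needs Perron--Frobenius only for $M$, not for $A_G$.

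The same computation with the inequality reversed also proves Theorem~\ref{thm:degeq1}. For a locally injective $f$ one gets $A_Gy\leq\lambda y$, strict exactly where $f$ is not locally surjective. So your approach handles both cited results uniformly.
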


As a consequence we may strengthen Theorem~\ref{thm:intersect}, by observing that within each single equivalence class of 
$(\ConnGraph,\Matrixeq)$ all three locally constrained homomorphisms coincide.
It is therefore natural to explore the common properties of these three orders within a single nontrivial equivalence class.
Consider the following construction:

For a nontrivial $\K$ choose $G_1\in \K$ arbitrarily. As $G_1$ is not a tree, it contains a cycle. Let $(u,v)$ be an edge of this cycle. For any $k\in \mathbb{N}$ we construct $G_k$ from $k$ disjoint copies of $G_1$, where the edge $(u_i,v_i)$ in the $i$-th copy
is replaced by the edge $(u_i,v_{i+1})$ between the consecutive copies (the index at $v$ is taken modulo $k$).
We refer to $G_k$ as a cyclic $k$-fold cover of $G_1$.

The following claims are immediate:
As $G_i\setminus (u,v)$ was connected, all graphs $G_k$ are connected.
When $l$ is a multiple of $k$, then $G_l$ is a cyclic $\frac{l}{k}$-fold cover of $G_k$.
Otherwise, $G_l$ and $G_k$ are incomparable in $\LocBiLeq$ by Proposition~\ref{prop:folklorecover} (ii).
Hence the assignment $k\to G_k$ is an embedding of $(\mathbb{N},\overleftarrow|)$ in $(\K,\LocBiLeq)$.

We summarize the properties of the partial orders 
within a single equivalence class of $\Matrixeq$ as follows:

\begin{corollary}
\label{cor:singleclass}
Let $\K$ be a nontrivial equivalence class of $(\ConnGraph,\Matrixeq)$. Then
the partial orders $(\K,\LocBiLeq)$, $(\K,\LocSurLeq)$ and $(\K,\LocInLeq)$ are all equivalent 
and have the following properties:
\begin{enumerate}
 \item there are no minimal elements;
 \item all three partial orders are future-finite;
 \item all three partial orders are future-finite-universal;
 \item for every pair of graphs $G,H\in \K$ there exists a graph $C\in \K$ such that $C\LocBiLeq G$ and $C\LocBiLeq H$.
\end{enumerate}
\end{corollary}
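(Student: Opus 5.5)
The plan is to first reduce everything to the locally bijective order and then check the four properties one by one.

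\emph{Equivalence of the three orders.} All graphs in $\K$ share the same degree refinement matrix, so Theorem~\ref{thm:degeq1} applies to any $G,H\in\K$: every locally injective homomorphism $G\to H$ is locally bijective. Theorem~\ref{thm:degeq2} gives the same conclusion for every locally surjective homomorphism. Since locally bijective homomorphisms are both locally injective and locally surjective, the three relations $\LocBiLeq$, $\LocSurLeq$ and $\LocInLeq$ coincide on $\K$. Each remaining property therefore only needs to be proved for $\LocBiLeq$.

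\emph{Properties (i)--(iii).} For (i), take any $G\in\K$. Since $\K$ is nontrivial, $G$ is not a tree. The cyclic $2$-fold cover $G_2$ built from $G$ (with $G$ playing the role of $G_1$) lies in $\K$, because covers preserve $\drm$. It satisfies $G_2\LocBiLeq G$ and has twice as many vertices, so it is strictly below $G$. Hence $G$ is not minimal.

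For (ii), if $G\LocBiLeq H$ with both graphs connected, then $|V_H|$ divides $|V_G|$ by Proposition~\ref{prop:folklorecover}, so $|V_H|\le|V_G|$. Only finitely many graphs have at most $|V_G|$ vertices, so every up-set is finite.

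For (iii), the paragraph preceding the corollary already gives an embedding $k\mapsto G_k$ of $(\mathbb N,\overleftarrow|)$ into $(\K,\LocBiLeq)$. That order is future-finite-universal by Proposition~\ref{prop:setdiviuniv}~b), and composing embeddings finishes this part. The one point to check in that paragraph is that $G_k\LocBiLeq G_l$ exactly when $l\mid k$. The ``if'' direction is the stated cyclic-cover observation. For ``only if'', $G_k\LocBiLeq G_l$ forces $l\mid k$ by the vertex-count divisibility in Proposition~\ref{prop:folklorecover}, since $|V_{G_k}|=k|V_{G_1}|$.

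\emph{Property (iv), the main obstacle.} This is the common-cover statement of Angluin and Leighton. I would cite Leighton's theorem~\cite{Leighton1982} that two finite connected graphs with the same degree refinement matrix have a common finite cover. If a self-contained sketch is wanted, I would proceed as follows. Realize each of $G$ and $H$ as a quotient of the universal cover, which is the same infinite tree $T$ for both because $\drm(G)=\drm(H)$. For each pair of blocks $i,j$, the edges between block $i$ and block $j$ form biregular bipartite pieces in $G$ and in $H$. Choose integers $n_G,n_H$ so that the blocks of $n_G$ copies of $G$ and $n_H$ copies of $H$ have equal sizes. Then $C$ is built on vertex set $\bigcup_i(\text{block}_i(G)\times\text{block}_i(H))$, suitably scaled, and between blocks $i$ and $j$ we take a bipartite graph whose projections to $G$ and to $H$ are both local bijections. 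Such a graph is obtained by a perfect-matching (Hall / K\H{o}nig) argument on each regular bipartite piece. Connectedness of $C$ can be arranged by taking a connected component, since a component of a cover of a connected graph still covers it. The delicate step is the counting that makes the block sizes compatible; this is exactly where Leighton's argument is needed.
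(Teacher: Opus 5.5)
Your proposal is correct and follows the paper's proof: the three orders coincide on $\K$ by Theorems~\ref{thm:degeq1} and~\ref{thm:degeq2}, items (i)--(iii) come from the cyclic-cover embedding $k\mapsto G_k$ of $(\mathbb{N},\overleftarrow|)$ together with Proposition~\ref{prop:setdiviuniv}, and (iv) is Leighton's common-cover theorem~\cite{Leighton1982}. Your vertex-count argument for (ii) via Proposition~\ref{prop:folklorecover} is the right way to make precise the paper's one-word justification ``inherited'': future-finiteness concerns all of $\K$, not only the embedded copy of $(\mathbb{N},\overleftarrow|)$. Your optional sketch for (iv) can be dropped, since it does not stand on its own (for instance, the edges inside a single block do not form bipartite pieces), and, as in the paper, the citation is what proves that item.
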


\begin{proof}
The equivalence of partial orders $(\K,\LocBiLeq)$, $(\K,\LocSurLeq)$ and $(\K,\LocInLeq)$ follows from Theorems \ref{thm:degeq1} and \ref{thm:degeq2}.

Items (i-iii) are inherited from the order $(\mathbb{N},\overleftarrow|)$, for the latter two see Proposition~\ref{prop:setdiviuniv}.

For (iv), a non-trivial method of constructing graphs $C$ is shown in
\cite{Leighton1982}, an alternative proof in terms of Bass--Serre theory appears in
\cite{Bass1990} and \cite{Neumann2011}. 
\end{proof}

Observe that 2-regular undirected graphs are undirected cycles and thus $\Cycle$ is an equivalence
class of $(\ConnGraph,\Matrixeq)$ characterized by $\drm=(2)$.  In particular, we get:

\begin{prop}
\label{pro:lochomo-univ}
Partial orders $(\Cycle,\LocBiLeq)=(\Cycle,\LocSurLeq)=(\Cycle,\LocInLeq)$ are future-finite-universal.
\end{prop}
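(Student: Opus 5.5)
The plan is to realise the order $(\mathbb{N},\overleftarrow|)$ directly on cycles and then appeal to Proposition~\ref{prop:setdiviuniv}~b). The key identity to establish is that for $k,l\geq 3$,
$$C_k \LocBiHom C_l \iff C_k \LocInHom C_l \iff C_k \LocSurHom C_l \iff l \mid k .$$
The first two equivalences are immediate from Corollary~\ref{cor:singleclass}. All cycles share $\drm=(2)$, so $\Cycle$ is a single nontrivial class of $\Matrixeq$. Theorems~\ref{thm:degeq1} and~\ref{thm:degeq2} therefore collapse the three relations on $\Cycle$. This gives the equalities of orders in the statement, and every cycle is its own core, since distinct cycles are not equivalent under any of these relations, as the divisibility criterion below will show.

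For the divisibility criterion, take a covering $f:C_k\LocBiHom C_l$. By Proposition~\ref{prop:folklorecover}, $l$ divides $k$. Conversely, if $l\mid k$, the map $v_i\mapsto w_{i \bmod l}$ is locally bijective: each $v_i$ has neighbours $v_{i\pm1}$, and these go bijectively onto $w_{(i\pm1)\bmod l}$, which are distinct because $l\geq 3$. Equivalently, this is the cyclic $(k/l)$-fold cover from the construction preceding Corollary~\ref{cor:singleclass}, applied with $G_1=C_l$.

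It follows that $k\mapsto C_k$ is an isomorphism between $(\{k\in\mathbb{N}: k\geq 3\},\overleftarrow|)$ and $(\Cycle,\LocBiLeq)$. It remains to check that this restricted divisibility order is still future-finite-universal. I would not argue abstractly here but reuse the embedding from Proposition~\ref{prop:setdiviuniv}. A future-finite order embeds into $(\Pfin(\mathbb{P}),\supseteq)$ via Corollary~\ref{cor:pastfuturefiniteuniv}(ii), and then into $(\mathbb{N},\overleftarrow|)$ via $X\mapsto\prod X$, where $\mathbb{P}$ is the set of odd primes. The only bad image is $\prod\emptyset=1$, which is not a cycle length. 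To avoid it, I would replace the map by $X\mapsto 3\prod X$ and take $\mathbb{P}$ to be the primes greater than $3$. The map stays an order embedding, since $3\prod X \mid 3\prod Y$ if and only if $X\subseteq Y$, and every image is at least $3$.

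The main obstacle is only this bookkeeping at the bottom of the divisibility order, namely excluding the lengths $1$ and $2$. The fix above handles it, so the result follows directly. Alternatively, one can invoke Corollary~\ref{cor:singleclass}(iii) for $\K=\Cycle$, since that item was derived from the same embedding $k\mapsto G_k$ with $G_1$ any cycle, say $C_3$. This gives $G_k=C_{3k}$, and $k\mapsto C_{3k}$ embeds $(\mathbb{N},\overleftarrow|)$ into $(\Cycle,\LocBiLeq)$ outright.
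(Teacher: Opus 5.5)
Your proof is correct and is essentially the paper's argument. The paper specializes Corollary~\ref{cor:singleclass} to the class $\K=\Cycle$ (degree refinement matrix $(2)$), whose item (iii) comes from the cyclic-cover embedding $k\mapsto G_k$ of $(\mathbb{N},\overleftarrow|)$ together with Proposition~\ref{prop:setdiviuniv}; this is exactly your alternative route $k\mapsto C_{3k}$, and your explicit criterion $C_k\LocBiHom C_l\iff l\mid k$ with the shift $X\mapsto 3\prod X$ only makes explicit the exclusion of lengths $1,2$, which the cyclic-cover route handles automatically.
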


Indeed, the partial order $(\UnorPath,\LocSurLeq)$,
where $\UnorPath$ is the class of all paths, is future-finite-universal as $P_l$
allows a locally surjective homomorphism to $C_k$ if and only if $k$ divides $l$.

Finally note that $(\ConnGraph,\LocBiLeq)$ can be augmented by specific infinite trees so
that each equivalence class $\K$ will contain a unique minimal element of $\LocBiLeq$ ---
whenever $\K$ is nontrivial, then there exists graph $C_{\K}$ (known as the universal cover) such that for every $G\in \K$, $C_{\K}\LocBiLeq G$~\cite{Leighton1982}.

We now pass from connected graphs to possibly disconnected graphs and combine Proposition~\ref{pro:lochomo-univ} 
and Theorem~\ref{thm:univ} to a stronger universality result.

\begin{corollary}[Universality]
\label{cor:univgraphs}
Let $\K$ be a nontrivial equivalence class of $(\Graphs,\Matrixeq)$. Then
the partial order $(\K,\LocBiLeq)$ is universal, and hence also $(\K,\LocSurLeq)$ and $(\K,\LocInLeq)$.
\end{corollary}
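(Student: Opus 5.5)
We apply Theorem~\ref{thm:univ} with $(F,\leq_F)$ taken to be $(\K',\LocBiLeq)$, where $\K'$ is the connected part of $\K$. The goal is to identify a suitable future-finite-universal order of connected graphs and to realize the subset order over it by disjoint unions.

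\textbf{Step 1: the base order.} Let $\K$ be a nontrivial class of $(\Graphs,\Matrixeq)$ and let $\K'$ be the set of connected graphs in $\K$. First I check that a disconnected graph with degree refinement matrix $\drm(\K)$ has every component in $\K'$. This needs care: the components need only share the same refined matrix up to a merging of blocks. So I will instead work directly with the cyclic covers $G_k$ of a fixed connected $G_1\in\K$ that is not a tree; such a $G_1$ exists because $\K$ is nontrivial. By the construction before Corollary~\ref{cor:singleclass}, $k\mapsto G_k$ embeds $(\mathbb{N},\overleftarrow|)$ into connected graphs with $\drm(G_1)$. By Proposition~\ref{prop:setdiviuniv}(b), this gives a future-finite-universal order $\mathcal{G}=\{G_k\}$, ordered by $\LocBiLeq$.

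\textbf{Step 2: disjoint unions realize the domination order.} To a finite antichain $X\subseteq\mathcal{G}$ I assign the disjoint union $\bigsqcup X$. Every component has the same $\drm$, so $\bigsqcup X$ has that $\drm$ as well, because the minimal equitable partition of a disjoint union of graphs with equal $\drm$ merges the corresponding blocks. Hence $\bigsqcup X\in\K$.

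I then claim that $\bigsqcup X\LocBiHom\bigsqcup Y$ if and only if $X\subsetLeq{\mathcal{G}}Y$ in the $\LocBiLeq$ sense.

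The backward direction is immediate. Map each component of $X$ by a cover onto a dominating component of $Y$; the union of these maps is locally bijective, since local bijectivity is checked vertex by vertex.

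The forward direction is where care is needed. A locally bijective map sends each connected component onto a component of the target, and its restriction is a cover. This gives domination. Note that local bijectivity does not require global surjectivity here, since the paper's definitions impose only local conditions. So domination alone suffices, and components of $Y$ that are not hit are allowed.

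\textbf{Step 3: conclude.} Representing each domination class by its antichain of maxima keeps the map well defined on cores. In the $\LocBiLeq$ quasi-order, $\bigsqcup X$ and $\bigsqcup\max X$ are equivalent, so the embedding respects the chosen representatives. Theorem~\ref{thm:univ} therefore embeds every countable partial order into $(\K,\LocBiLeq)$.

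For $\LocSurLeq$ and $\LocInLeq$, Theorems~\ref{thm:degeq1} and~\ref{thm:degeq2} show that between graphs with the same $\drm$, every locally injective or locally surjective map is locally bijective. This requires the target component to be connected, which holds componentwise. So the three relations coincide on these unions and the same embedding works.

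The main obstacle is Step 2, specifically verifying that a disjoint union of equal-$\drm$ connected graphs lies in $\K$, and that the degree-matrix theorems apply componentwise to disconnected sources.
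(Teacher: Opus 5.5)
Your proposal is correct and follows the paper's own route. The paper combines future-finite-universality of a connected $\drm$-class (realized by the cyclic covers $G_k$) with Theorem~\ref{thm:univ}, realized by disjoint unions, and then applies Theorems~\ref{thm:degeq1} and~\ref{thm:degeq2} componentwise to identify the three orders. You supply details the paper leaves implicit, such as that disjoint unions of equal-$\drm$ components stay in $\K$ and that an LB map sends each component onto a single target component; like the paper, you read ``nontrivial'' as meaning that $\K$ contains a connected graph with a cycle.
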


In particular, partial orders $(\Cycles,\LocBiLeq)=(\Cycles,\LocSurLeq)=(\Cycles,\LocInLeq)$ are universal.

We are not aware of any reasonable characterization of gaps in locally bijective order and doubt that any exists 
due to the following construction.

Let $G$ be a $(r-1)$-regular graph on $t\cdot r$ vertices. Choose a prime $p$ greater than $t$ and augment
$G$ with $p-t$ copies of $K_r$. Join these graphs together similarly as the construction of cyclic $(p-t+1)$-fold cover
to obtain a graph $G'$. It is straightforward to argue that $G \LocBiHom K_r$ if and only if $G' \LocBiHom K_r$. 
By Proposition~\ref{prop:folklorecover} (ii) the latter is equivalent to saying that $(G',K_r)$ is a gap in $(\Graphs,\LocBiLeq)$.
As the decision problem whether $G \LocBiHom K_r$ for $r\ge 4$ is {\sf NP}-complete,
it is unlikely that such a characterization of gaps would be computationally efficient.

On the other hand, the orders considered have many gaps, as Proposition~\ref{prop:folklorecover} yields:

\begin{corollary}
\label{cor:locbigaps}
Let $G_1$ be a graph with a cycle. Then for every prime $p$, any
cyclic $p$-fold cover $G_p$ forms with $G_1$ an LB-gap, an LI-gap and also an LS-gap.  With our convention that $G\leq H$ means that there is a homomorphism $G\to H$, this gap is the pair $(G_p,G_1)$.
\end{corollary}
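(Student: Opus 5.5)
The plan is to combine the covering structure of the cyclic $p$-fold cover with the divisibility constraint from Proposition~\ref{prop:folklorecover}.

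\textbf{Step 1: the pair is comparable.} First I check that $G_p\le G_1$ in all three orders. Map the $i$-th copy of $G_1$ inside $G_p$ identically onto $G_1$. The rerouted edge $(u_i,v_{i+1})$ is sent to $(u,v)$. Every vertex keeps its neighborhood bijectively, so this map is locally bijective, and therefore also locally injective and locally surjective. Since $p\ge 2$, the graph $G_p$ has $p|V_{G_1}|>|V_{G_1}|$ vertices, so $G_p\not\cong G_1$. Both graphs are connected, and every connected graph is a core in these orders, so the inequality is strict.

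\textbf{Step 2: nothing lies strictly between.} Suppose $G_p\le X\le G_1$ with $X$ connected, in one of the three orders. Composing locally constrained homomorphisms of the same type gives one of that type, so $G_p$ and $G_1$ lie in the same $\Matrixeq$-class $\K$. I then argue that $X\in\K$ as well. In the LI order, there are locally injective maps $G_p\to X\to G_1$. The folklore argument behind Theorem~\ref{thm:degeq1} shows that a locally injective map can only keep or raise degrees, and the same holds along the equitable partition. So $\drm(G_p)\preceq\drm(X)\preceq\drm(G_1)=\drm(G_p)$, which forces equality. The LS case is dual: degrees can only drop, and Theorem~\ref{thm:degeq2} plays the role of Theorem~\ref{thm:degeq1}.

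\textbf{Step 3: applying divisibility.} Once $X\in\K$, Theorems~\ref{thm:degeq1} and~\ref{thm:degeq2} make both maps $G_p\to X$ and $X\to G_1$ locally bijective. Proposition~\ref{prop:folklorecover}(ii) then gives $|V_{G_1}|$ divides $|V_X|$ and $|V_X|$ divides $p|V_{G_1}|$. Writing $|V_X|=m|V_{G_1}|$, we get $m\mid p$. Since $p$ is prime, $m=1$ or $m=p$. A locally bijective map between connected graphs with equal vertex counts is a bijective covering, hence an isomorphism. So $X\cong G_1$ or $X\cong G_p$, and $(G_p,G_1)$ is a gap in each of the three orders.

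\textbf{Main obstacle.} The hard part is Step 2: showing that an intermediate connected $X$ must lie in the same degree-refinement class, without assuming it. I would first try an eigenvalue or counting argument. Locally injective maps $G_p\to X$ and $X\to G_1$ between connected graphs compose to a locally bijective map. For that to happen, each factor must preserve degrees: degrees can only rise along a locally injective map, yet they return to the same values at the end. A map that is locally injective and degree-preserving is locally bijective. The LS case is symmetric, with degrees only falling. This degree-sandwich argument may already suffice, without needing the full matrix comparison.
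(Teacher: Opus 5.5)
Your argument is correct and follows the paper's route. The paper obtains the gap directly from the divisibility in Proposition~\ref{prop:folklorecover}(ii), namely $|V_{G_1}|\mid|V_X|\mid p|V_{G_1}|$ within one degree-refinement class. It leaves implicit (attributing it only to Theorems~\ref{thm:degeq1} and~\ref{thm:degeq2}) that an intermediate connected $X$ stays in that class, and your Step~2 spells this out.

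Replace the comparison $\drm(G_p)\preceq\drm(X)\preceq\drm(G_1)$ with your degree-sandwich, because $\preceq$ is not a defined relation with a known antisymmetry. The sandwich runs as follows. First, $\drm(G_p)=\drm(G_1)$ follows from Step~1 and Proposition~\ref{prop:folklorecover}(i), not from composing maps. Hence Theorem~\ref{thm:degeq1} (resp.\ Theorem~\ref{thm:degeq2}) makes $h\circ g$ locally bijective. The squeeze $|N_{G_p}(x)|\le|N_X(g(x))|\le|N_{G_1}(hg(x))|=|N_{G_p}(x)|$ (reversed for LS) then makes $g$ locally bijective. Finally, $\drm(X)=\drm(G_1)$ together with the same theorem makes $h$ locally bijective at every vertex of $X$, not only on the image of $g$.
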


\subsection{Locally surjective and locally bijective homomorphisms}

For connected graphs $G$ and $H$, it is easy to observe that both locally bijective and locally surjective homomorphisms are also surjective homomorphisms (but not vice versa; surjective homomorphism may not be locally surjective homomorphism). 
In such a situation we get the following order properties:

By Proposition \ref{prop:shomo-core} we have:

\begin{prop}[Cores]
\label{prop:locsurbi-core}
Every finite connected graph is LS-core and LB-core.
\end{prop}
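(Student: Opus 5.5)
The plan is to reduce the statement to Proposition~\ref{prop:shomo-core}, as the sentence preceding it suggests.

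Fix a finite connected graph $G$ and suppose $H\in\ConnGraph$ is LS-equivalent to $G$. That is, there are locally surjective homomorphisms $f:G\to H$ and $g:H\to G$. We show $G\cong H$. Then the equivalence class of $G$ in $(\ConnGraph,\LocSurLeq)$ consists of $G$ alone, so $G$ is trivially an LS-core.

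The first step is to verify the observation quoted just before the statement: a locally surjective homomorphism between connected graphs is surjective, both on vertices and on edges. For vertices, the image $f(V_G)$ is nonempty. If $y=f(x)$, then local surjectivity at $x$ gives $N_H(y)\subseteq f(N_G(x))\subseteq f(V_G)$. So the image is closed under taking neighbours, and connectivity of $H$ gives $f(V_G)=V_H$. For edges, take any edge $(y,y')\in E_H$. Pick $x$ with $f(x)=y$; local surjectivity at $x$ gives $x'\in N_G(x)$ with $f(x')=y'$. Hence $(x,x')\in E_G$ maps onto $(y,y')$, so $f$ is edge-surjective.

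It follows that $f:G\SurHom H$ and $g:H\SurHom G$ are surjective homomorphisms in the sense of Section~\ref{sec:surjhomo}. The argument of Proposition~\ref{prop:shomo-core} then applies directly. Vertex-surjectivity in both directions gives $|V_G|=|V_H|$, so $f$ and $g$ are bijections. Counting edges gives $|E_G|\ge|E_H|\ge|E_G|$, so $f$ is an isomorphism. Hence $G$ is the unique graph of minimum size in its class, i.e.\ an LS-core.

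For LB-cores, every locally bijective homomorphism is in particular locally surjective. So if $G$ and $H$ are LB-equivalent connected graphs, they are also LS-equivalent, and the previous paragraph gives $G\cong H$. Alternatively, one can invoke Theorem~\ref{thm:intersect}.

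There is no real obstacle here. The only points needing care are the connectivity hypothesis, which is exactly what makes local surjectivity imply global surjectivity, and the restriction to $\ConnGraph$. Without that restriction the statement fails, since a locally surjective map into a disconnected target need not be onto.
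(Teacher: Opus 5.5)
Your proof is correct and takes the same route as the paper, which also derives the result from Proposition~\ref{prop:shomo-core} via the observation that locally surjective (and hence locally bijective) homomorphisms between connected graphs are vertex- and edge-surjective; you additionally verify that observation in full. One small inaccuracy is in your closing remark: without the restriction to $\ConnGraph$ the equivalence class of $G$ is no longer a singleton ($G$ and two disjoint copies of $G$ are LB-equivalent), but $G$ is still a core, because each component of any $H$ with a locally surjective map to $G$ maps onto $V_G$, so $|V_H|\geq|V_G|$.
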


By Proposition \ref{prop:shomo-univ} we have:

\begin{prop}[Future-finiteness]
\label{pro:lochomo-fini}
Partial orders $(\ConnGraph,\LocBiLeq)$ and $(\ConnGraph,\LocSurLeq)$ are future-finite.
\end{prop}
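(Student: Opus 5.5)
The plan is to reduce the statement to Proposition~\ref{prop:shomo-univ}. The key fact is that on $\ConnGraph$ every locally surjective homomorphism is also a surjective homomorphism, that is, vertex-surjective and edge-surjective. Since every locally bijective homomorphism is locally surjective, the same then holds for LB.

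First I will check the surjectivity claim. Let $f:G\to H$ be locally surjective with $G,H$ connected and $G$ nonempty. Its image $f(V_G)$ is closed under taking neighbours in $H$: if $y=f(x)$ and $y'\in N_H(y)$, then local surjectivity at $x$ gives $x'\in N_G(x)$ with $f(x')=y'$. Because $H$ is connected, this forces $f(V_G)=V_H$. For edge-surjectivity, take an edge $(y,y')\in E_H$. Pick a preimage $x$ of $y$ and then $x'\in N_G(x)$ with $f(x')=y'$; the edge $(x,x')$ maps onto $(y,y')$.

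Next I fix a connected graph $G$ and bound its up-set. Every $H\in\ConnGraph$ with $G\LocSurLeq H$ receives a vertex-surjective map from $G$, so $|V_H|\leq|V_G|$. There are only finitely many graphs up to isomorphism on at most $|V_G|$ vertices. By Proposition~\ref{prop:locsurbi-core} every connected graph is its own core, so no identification of cores is needed. Hence the up-set of $G$ in $(\ConnGraph,\LocSurLeq)$ is finite. The up-set of $G$ in $(\ConnGraph,\LocBiLeq)$ is contained in the LS up-set, so it is finite as well.

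The only point needing care is that the paper defines $\VSurLeq$ on all of $\DiGraphs$, while here the order lives on connected undirected loopless graphs. I will therefore argue the vertex-count bound directly, as above, rather than citing Proposition~\ref{prop:shomo-univ} verbatim. The connectedness of $H$ is essential to the surjectivity step, and that step carries the substance of the argument. The empty graph is excluded from $\ConnGraph$, or else trivially has finite up-set.
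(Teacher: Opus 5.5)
Your proof is correct and is essentially the paper's argument: the paper observes that on connected graphs locally surjective (hence locally bijective) maps are surjective and then invokes Proposition~\ref{prop:shomo-univ}. You additionally spell out the neighbourhood-closure argument the paper leaves implicit, and you redo the vertex-count bound directly. One small correction to your closing remark: if the empty graph were admitted, the empty map would be vacuously locally bijective to every graph. Its up-set would then be all of $\ConnGraph$ rather than finite, so the statement genuinely depends on connected graphs being nonempty.
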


\begin{lem}
\label{lem:locsurgaps}
For any tree $T$ there are infinitely many graphs $H$ such that $(H,T)$ is an LS-gap.
\end{lem}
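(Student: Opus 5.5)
The plan is to reduce the lemma to a size invariant and then use future-finiteness to produce the gaps. The key steps are: a general principle for finding gaps below $T$, a construction of connected graphs $H_n$ that map locally surjectively onto $T$ but have only large locally surjective images other than $T$, and a counting argument that yields infinitely many distinct gaps. I have not checked that the proposed $H_n$ actually has the required property; that step is the crux.

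\emph{Step 1: gaps below $T$.} Recall that a locally surjective homomorphism between connected graphs is surjective on vertices. Hence if $H\LocSurHom X$ then $|V_X|\le |V_H|$, and $X$ is an LS-image of $H$. Let $H\in\ConnGraph$ satisfy $H\LocSurLeq T$ and $H\neq T$. The interval $\{X : H\LocSurLeq X\LocSurLeq T\}$ is finite, since the order is future-finite by Proposition~\ref{pro:lochomo-fini}. Choose an element $X$ of this interval that is maximal among those different from $T$. Then $(X,T)$ is an LS-gap, and $X$ is an LS-image of $H$. It therefore suffices to construct connected graphs $H_n$ with
\[
H_n\LocSurLeq T \quad\text{and}\quad \text{every LS-image } X\not\cong T \text{ of } H_n \text{ has } |V_X|\ge n .
\]
Given such $H_n$, the lower endpoints $X_n$ of the resulting gaps satisfy $|V_{X_n}|\ge n$. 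So infinitely many distinct graphs $H$ with $(H,T)$ an LS-gap arise.

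\emph{Step 2: a useful special case.} Any connected $H$ with $|V_H|=|V_T|+1$ and $H\LocSurHom T$ already gives a gap $(H,T)$. Indeed, an intermediate $X$ has $|V_T|$ or $|V_T|+1$ vertices. In the first case $X\to T$ is a vertex bijection, and local surjectivity covers every edge of $T$, so $X\cong T$. In the second case $H\to X$ is a vertex bijection and the same argument gives $X\cong H$. This only yields finitely many gaps, but it is the model for Step 3: design $H_n$ so that the only possible identifications of vertices collapse everything onto $T$.

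\emph{Step 3: construction of $H_n$ (main obstacle).} This is where I expect the difficulty. LS-homomorphisms have a lot of freedom, for example folding a long path or cycle onto a shorter one. So $H_n$ must be chosen so that partial identifications are incompatible with local surjectivity. My first attempt would be the following. Fix a leaf $\ell$ of $T$ with neighbour $w$, and let $G$ be obtained from $T$ by replacing one edge on the $w$-side with a $4$-cycle, so that $G\LocSurHom T$ and $G$ contains a cycle. Then take $H_n=G_p$, the cyclic $p$-fold cover of $G$ as in the construction before Corollary~\ref{cor:singleclass}, for a prime $p\ge n$. The property to establish is that every LS-image $X$ of $G_p$ either has at least $p$ vertices or is $T$ itself. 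I would try to prove this by tracking the degree pattern: the covering edges are the only places where the copies differ. An identification of two copies must therefore either respect the cyclic structure, giving a quotient $G_d$ with $d\mid p$, hence $d\in\{1,p\}$, or else destroy local surjectivity at the endpoints of the $4$-cycle unless the entire cycle is folded onto the corresponding edge of $T$. Step 1 then finishes the proof. If this case analysis fails, I would try the alternative of attaching long paths with distinct leaf degrees to $T$, chosen so that any folding is forced to be the global projection onto $T$.
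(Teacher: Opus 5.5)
\textbf{There is a genuine gap: the construction in Step~3 cannot work, and the property targeted in Step~1 is stated too strongly.}

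\emph{Step~1 as written.} The device itself is sound: a maximal element of the finite set $\{X : H\LocSurLeq X\LocSurLeq T\}\setminus\{T\}$ is the lower end of a gap. But the property you then aim for cannot hold once $|V_T|\ge 3$. Every connected $H$ with $H\LocSurLeq T$ is bipartite and has an edge, so its $2$-colouring is a locally surjective map onto $K_2$. That is a two-vertex image different from $T$. You must quantify only over images $X$ with $X\LocSurLeq T$.

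\emph{Step~3, even after this correction.} By the construction before Corollary~\ref{cor:singleclass}, $G_p$ is a cyclic $p$-fold cover of $G=G_1$. Hence $G_p\LocBiHom G$, and so $G_p\LocSurLeq G\LocSurLeq T$. Thus $G$, which contains a cycle and is therefore not $T$, lies in every interval $\{X: G_p\LocSurLeq X\LocSurLeq T\}$. Its number of vertices does not depend on $p$; it is exactly the case $d=1$ of your own case analysis. Moreover, a maximal element of $\{X : G\LocSurLeq X\LocSurLeq T\}\setminus\{T\}$ stays maximal in each of these larger intervals. So Step~1 may return the same gap for every $p$, and no infinitude follows. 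Separately, $G$ as described need not satisfy $G\LocSurLeq T$ at all: the new vertices of the $4$-cycle have degree $2$, and for instance when $T$ is a star no locally surjective map $G\to T$ exists.

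\emph{The missing idea.} The base of your $p$-fold construction must be $T$ itself, so that the trivial quotient is $T$ rather than an intermediate graph. Since a tree has no cycle to cut open, the copies are chained linearly rather than cyclically. This is the paper's route.
\begin{itemize}
\item For a path $P_n$, a locally surjective map $P_m\to P_n$ sends leaves to leaves and can turn around only at the ends of $P_n$. So it exists if and only if $n\mid m$.
\item Any $X$ with $P_{pn}\LocSurLeq X\LocSurLeq P_n$ has maximum degree at most $2$ and contains a leaf (the image of an end of $P_{pn}$). Hence $X$ is a path $P_m$ with $n\mid m\mid pn$, and $(P_{pn},P_n)$ is a gap for every prime $p$.
\item For a general tree, the paper declares two leaves to be the initial and terminal vertices and connects $p$ copies of $T$ through them into a sequence. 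The result is a tree that maps locally surjectively onto $T$ and, for prime $p$, forms a gap with $T$ directly.
\end{itemize}
No maximality argument is needed, and different primes give lower endpoints of different sizes.
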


\begin{proof}
Let us first consider the simple case of a path $P_n$ with $n$ edges. 
It is not difficult to see that every path $P_{pn}$ where $p$ is a prime number
forms a gap: the only graphs in between $P_n$ and $P_{pn}$ and locally surjective
homomorphism of two paths must map the initial vertex to the initial or terminal vertex 
and the terminal vertex to the initial or terminal vertex. Moreover, it cannot flip
in the middle of the path.

In general, each tree $T$ has at least two leaf vertices of degree one. 
Denote two leaves of the tree as the initial and the terminal vertex and connect $p$
copies of $T$ into a sequence to form a gap.
\end{proof}

\begin{prop}[Dualities]
\label{prop:locbidual}
There are no generalized finite duality pairs in $(\ConnGraph,\LocBiLeq)$
and in $(\ConnGraph,\LocSurLeq)$.
\end{prop}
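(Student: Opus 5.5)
The plan is to exploit future-finiteness (Proposition~\ref{pro:lochomo-fini}) together with a family of graphs that cannot lie below any fixed finite set of targets. Suppose, for a contradiction, that $(\mathcal{F},\mathcal{D})$ is a generalized finite duality pair in $(\ConnGraph,\LocSurLeq)$. By Proposition~\ref{prop:locsurbi-core} every connected graph is its own LS-core, so the elements of the order are simply the connected graphs up to isomorphism. The duality equation says
\[
\bigcup_{f\in\mathcal{F}}\uparrow f=\ConnGraph\setminus\bigcup_{d\in\mathcal{D}}\downarrow d .
\]
Because the order is future-finite and $\mathcal{F}$ is finite, the left-hand side is a finite set. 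Hence all but finitely many connected graphs must lie in $\bigcup_{d\in\mathcal{D}}\downarrow d$. This reduction also covers the degenerate case $\mathcal{F}=\emptyset$.

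Next I exhibit infinitely many connected graphs lying below no $d\in\mathcal{D}$. I would use the complete graphs $K_n$. Let $N=\max_{d\in\mathcal{D}}|V_d|$ (or $N=0$ if $\mathcal{D}=\emptyset$). Any homomorphism $K_n\Hom d$ into a loopless graph $d$ is injective, since two adjacent vertices cannot share an image without creating a loop. Thus $K_n\LocSurHom d$, or even $K_n\Hom d$, forces $n\le|V_d|\le N$. Consequently, for every $n>N$, the graph $K_n$ lies outside $\bigcup_{d\in\mathcal{D}}\downarrow d$. These are infinitely many pairwise non-isomorphic connected graphs (pairwise distinct cores), contradicting the finiteness established above.

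For $(\ConnGraph,\LocBiLeq)$ the same argument applies verbatim. Up-sets are again finite by Proposition~\ref{pro:lochomo-fini}. Moreover, $\LocBiLeq$ is contained in $\LocSurLeq$, so the LB down-sets are contained in the LS down-sets and the $K_n$ with $n>N$ again avoid them. Alternatively, one can note that $\downarrow d$ in LB stays inside the $\Matrixeq$-class of $d$ (Proposition~\ref{prop:folklorecover}), while there are infinitely many classes.

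The only step needing care is the second one: choosing a family that escapes every down-set regardless of $\mathcal{D}$. At first one might try to use degree refinement matrices, but LS does not preserve $\drm$. The injectivity of homomorphisms from cliques into loopless graphs handles LS and LB uniformly. This is exactly where the standing loopless convention of this section is used; with loops allowed, a looped vertex would absorb everything.
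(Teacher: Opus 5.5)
Your argument is correct, but it takes a different route from the paper.

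The paper feeds both orders into the maximal-element criterion of Proposition~\ref{prop:futurefiniteduals}. For $\LocBiLeq$ it uses that locally bijective maps preserve $\drm$, so the infinitely many $\Matrixeq$-classes yield infinitely many maximal elements; your alternative remark is essentially this. For $\LocSurLeq$ it appeals to the infinitely many gaps of Corollary~\ref{cor:locbigaps} and Lemma~\ref{lem:locsurgaps}.

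You use future-finiteness only to conclude that $\ConnGraph\setminus\bigcup_{d\in\mathcal{D}}\downarrow d$ would have to be finite. You then refute this with the cliques $K_n$, $n>\max_{d\in\mathcal{D}}|V_d|$, which admit no homomorphism at all into a smaller loopless graph. This handles both orders in one stroke and needs no gap analysis. It also sidesteps a point the paper's locally surjective step leaves implicit: why the lower endpoints of those gaps are actually maximal in the complement of $\bigcup_{f\in\mathcal{F}}\uparrow f$.

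In fact each $K_n$ is a maximal element of both orders. A locally surjective map onto a connected graph is vertex-surjective, and a map from $K_n$ to a loopless graph is injective. So your proof is the paper's criterion applied to a particularly convenient family, in the spirit of the remark after Proposition~\ref{prop:surdual} on cliques in loopless surjective orders.

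What the paper's degree-refinement and gap reasoning offers in return is that it can be run inside a single nontrivial $\Matrixeq$-class. That is how it is invoked again in the locally injective duality proposition at the end of Section~\ref{sec:locally-constrained}. There your clique family is unavailable, since each such class contains at most one complete graph.
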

\begin{proof}
We apply Proposition~\ref{prop:futurefiniteduals}.  

In the locally bijective case the analysis is easier.  There are no
homomorphisms between graphs $G$, $H$ such that $\drm(G)\neq \drm(H)$ and
because there are infinitely many degree refinement matrices and thus there
are infinitely many maximal elements in $(\ConnGraph,\LocBiLeq)$.

In the case of locally surjective order, there are infinitely many gaps below graphs with a cycle 
(Corollary~\ref{cor:locbigaps}) as well as below acyclic graphs (Lemma~\ref{lem:locsurgaps}).
Hence there are no duality pairs.
\end{proof}

\subsection{Locally injective homomorphisms}

We now show that the locally injective homomorphism order has a number of properties 
that are in sharp contrast with the other two locally constrained orders.

To characterize cores we apply the following classical result (that is a special case of Theorem~\ref{thm:degeq1} but proved three decades earlier).

\begin{thm}[Ne\v set\v ril, \cite{Nesetril1971}]
\label{thm:locin-auto}
Let $G$ be a finite connected graph. Every locally injective homomorphism $f:G\LocInHom G$ is an automorphism of $G$.
\end{thm}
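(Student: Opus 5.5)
The approach is to show that a locally injective self-map $f:G\LocInHom G$ of a finite connected graph is injective on vertices, and then that it preserves non-edges, so that it is an automorphism. Since $G$ is finite, injectivity on $V_G$ already gives bijectivity on $V_G$. Local injectivity gives $\deg_G(v)\le \deg_G(f(v))$ for every $v$, because $f$ maps $N_G(v)$ injectively into $N_G(f(v))$.

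\textbf{Step 1: degrees are preserved.} Summing the inequality over all $v$ gives
\[
\sum_v \deg(v)\le \sum_v \deg(f(v)),
\]
but this needs care when $f$ is not yet known to be surjective. A cleaner route is the degree-refinement argument. Alternatively, note that $f$ induces a map on directed edges (arcs) $(u,v)\mapsto(f(u),f(v))$. Local injectivity says exactly that this arc map is injective: two arcs $(u,v)$ and $(u,w)$ with the same image must have $v=w$. Two arcs $(u,v)$ and $(u',v')$ with $u\neq u'$ but the same image are harder to rule out directly, so I would argue differently. Consider the iterates $f^k$. Since the set of self-maps of $V_G$ is finite, some power $e=f^m$ is idempotent, $e\circ e=e$, and $e$ is still locally injective. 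On the image $R=e(V_G)$ the map $e$ is the identity. For any $v\in R$, the map $e$ sends $N_G(v)$ injectively into $N_G(e(v))=N_G(v)$. By finiteness this is a bijection of $N_G(v)$, so every neighbor $w$ of $v$ is the image of some neighbor $w'$ of $v$, and hence $w=e(w')\in R$. Thus $R$ is closed under taking neighbors. Since $G$ is connected and $R\neq\emptyset$, we get $R=V_G$, so $e$ is the identity and $f$ is a bijection on vertices with inverse $f^{m-1}$.

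\textbf{Step 2: $f$ is an automorphism.} Now $f$ is a bijective homomorphism of the finite graph $G$ to itself. It maps $E_G$ injectively into $E_G$, hence bijectively, so it also preserves non-edges. Therefore $f$ is an automorphism.

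The main obstacle is Step 1: ruling out collapsing of distant vertices, which local injectivity alone does not forbid. The idempotent-power trick handles this, and it is also where connectivity is used. The one point to check is that the composition of locally injective maps is locally injective, which is immediate.
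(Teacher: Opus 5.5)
Your proof is correct, and it takes a different route from the paper. The paper gives no proof of its own. It cites Ne\v{s}et\v{r}il and notes that the statement is the special case $H=G$ of Theorem~\ref{thm:degeq1} (Fiala--Kratochv\'il): a locally injective map between graphs with equal degree refinement matrices is locally bijective. That route also needs the fiber count behind Proposition~\ref{prop:folklorecover}: fibers of a locally bijective map onto a connected graph have constant size $k$, and $|V_G|=k|V_G|$ forces $k=1$.

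Your idempotent-power argument avoids equitable partitions and covering theory altogether. Its only inputs are these:
\begin{itemize}
\item the self-maps of $V_G$ form a finite semigroup, so some $e=f^m$ is idempotent;
\item composites of locally injective maps are locally injective;
\item on $R=e(V_G)$ the map $e$ is the identity, so $N_G(v)\to N_G(v)$ is an injective self-map of a finite set, hence a bijection, and $R$ is closed under neighbors.
\end{itemize}
Connectivity then gives $f^m=\mathrm{id}$. This is shorter and self-contained. What it does not give is the two-graph statement of Theorem~\ref{thm:degeq1}, because composing $f$ with itself needs source and target to coincide. For the paper's only use of the result, in Corollary~\ref{cor:locin-core} (applied to $f'\circ f$ and $f\circ f'$), your self-map version is exactly what is needed.

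Two small cleanups. First, delete the exploratory remarks at the start of Step~1, since they are unused. Your claim there that local injectivity says ``exactly'' that the arc map is injective is inaccurate: it only gives injectivity among arcs with a common tail. Second, Step~2 can be shortened: $f^{-1}=f^{m-1}$ is itself a homomorphism, so $f$ is immediately an automorphism.
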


\begin{corollary}[Cores]
\label{cor:locin-core}
Every finite connected graph is an LI-core.
\end{corollary}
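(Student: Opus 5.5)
The plan is to copy the argument of Proposition~\ref{prop:shomo-core}, with Theorem~\ref{thm:locin-auto} supplying the rigidity that surjectivity gave there. An LI-core is a graph with the smallest number of vertices in its class of $\LocInHom$-equivalence. So it suffices to show that two LI-equivalent finite connected graphs are isomorphic. Then every class contains only one graph up to isomorphism, and that graph is automatically the LI-core.

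Let $G,H\in\ConnGraph$ with $f:G\LocInHom H$ and $g:H\LocInHom G$. A composition of locally injective homomorphisms is again locally injective. Indeed, for a vertex $v$, $f$ maps $N_G(v)$ injectively into $N_H(f(v))$, and $g$ maps $N_H(f(v))$ injectively into $N_G(g(f(v)))$. Hence $g\circ f:G\LocInHom G$, and it is an automorphism of $G$ by Theorem~\ref{thm:locin-auto}. In the same way $f\circ g$ is an automorphism of $H$.

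Since $g\circ f$ is a bijection on $V_G$, the map $f$ is injective and $g$ is surjective. Symmetrically, since $f\circ g$ is a bijection on $V_H$, the map $g$ is injective and $f$ is surjective. Thus $f$ is a bijective homomorphism with inverse $f^{-1}=(g\circ f)^{-1}\circ g$. This inverse is a composition of homomorphisms, so it is itself a homomorphism, and therefore $f$ is an isomorphism. Consequently each equivalence class of $\LocInHom$ on $\ConnGraph$ consists of a single isomorphism type, and every connected graph is an LI-core.

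No step here is a real obstacle, since Theorem~\ref{thm:locin-auto} does all the work. Two points need care: checking that the composition of locally injective homomorphisms is locally injective, and the final upgrade from a bijective homomorphism to an isomorphism. The latter uses the automorphism $g\circ f$ rather than a count of edges. Connectedness of both graphs is used only so that Theorem~\ref{thm:locin-auto} applies to $g\circ f$ and to $f\circ g$.
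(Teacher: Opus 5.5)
Your proof is correct and follows the paper's argument: compose $f$ and $g$ in both orders, apply Theorem~\ref{thm:locin-auto} to get automorphisms of $G$ and $H$, and deduce that both maps are bijective. Your explicit inverse $f^{-1}=(g\circ f)^{-1}\circ g$ makes precise the final step from a bijective homomorphism to an isomorphism, which the paper leaves implicit after noting $|V_G|=|V_H|$.
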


\begin{proof}
The mappings $f:G\LocInHom H$ and $f':H\LocInHom G$ can be composed into locally injective homomorphisms $f'\circ f:G\LocInHom G$ and $f\circ f':H\LocInHom H$.  By Theorem~\ref{thm:locin-auto}, $f'\circ f$ is an automorphism of $G$, hence surjective; therefore $f'$ is surjective and $|V_H|\geq |V_G|$.  Similarly, $f\circ f'$ is an automorphism of $H$, hence surjective; therefore $f$ is surjective and $|V_G|\geq |V_H|$. Consequently $|V_G|=|V_H|$ and both $f$ and $f'$ are isomorphisms.
\end{proof}

For the first (and also for the last) time we can prove a variant of Welzl's density theorem~\cite{Welzl1982}. 
This shows that the locally injective homomorphism order on undirected connected graphs is not locally finite.

\begin{thm}[Density]
\label{thm:density}
Let $G$ and $H$ be connected graphs such that $\drm(G)\neq \drm(H)$, $G\LocInL H$ and $H$ has minimum degree~2. 
\begin{enumerate}
\item[(a)] Then a connected graph $F$ exists, such that $G\LocInL F\LocInL H$, $\drm(G) \neq \drm(F) \neq \drm(H)$. 
\item[(b)] If $G$ has minimum degree~2 and $H$ is not isomorphic to a cycle, then $F$ can also be chosen of 
minimum degree~2 and nonisomorphic to a cycle.
\end{enumerate}
\end{thm}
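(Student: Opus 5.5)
The idea is to build $F$ by a ``rolling'' construction: take a large cyclic cover of $H$ and graft in a copy of $G$ using the given locally injective map. Fix $f:G\LocInHom H$. Since $\drm(G)\neq\drm(H)$, Theorem~\ref{thm:degeq1} implies that $f$ is not locally bijective. Hence there is a vertex $x\in V_G$ whose neighbourhood does not cover $N_H(f(x))$; some neighbour $y$ of $f(x)$ is missed.

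Because $H$ has minimum degree~2 and is not a tree (a tree has leaves), $H$ contains a cycle. Let $H_k$ be a cyclic $k$-fold cover of $H$ as in the construction before Corollary~\ref{cor:singleclass}. Then $H_k\LocBiHom H$, and $H_k$ is connected.

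We define $F$ as follows. Take $H_k$ for a large $k$ (say a prime exceeding $|V_G|+|V_H|$). Take a disjoint copy of $G$. Add a single edge joining $x$ to a vertex $y'$ of $H_k$ lying above $y$. If needed to keep local injectivity at $y'$, first remove one edge of $H_k$ at $y'$ whose image is the edge $y f(x)$, so the new edge takes over that role.

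\textbf{The upper bound $F\LocInHom H$.} Map $G$ by $f$ and $H_k$ by the covering projection. At $x$, the new neighbour $y'$ goes to $y$, which was missed by $f$ on $N_G(x)$. At $y'$ we removed the edge with image $y f(x)$, so adding $x\mapsto f(x)$ restores injectivity there. If the removed edge would disconnect $H_k$ or create a collision, I would choose it on a lifted cycle so that $H_k$ minus the edge stays connected.

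\textbf{The lower bound $G\LocInHom F$.} This is the inclusion map.

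\textbf{Strictness.} The inequalities are strict because $F$ is connected, is not isomorphic to $G$ or $H$ (it has more vertices than either), and all connected graphs are LI-cores by Corollary~\ref{cor:locin-core}.

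\textbf{Degree refinement matrices.} We need $\drm(F)\neq\drm(H)$ and $\drm(F)\neq\drm(G)$. The first should follow because $F\LocInHom H$ is not locally bijective at vertices of the $G$-part (at $x$ itself, or elsewhere in $G$ if $\deg$ agrees). So Theorem~\ref{thm:degeq1} forces different matrices once $F$ is connected. For the second, if $\drm(F)=\drm(G)$, then the inclusion $G\LocInHom F$ would be locally bijective by Theorem~\ref{thm:degeq1}, which is impossible since $x$ gained a neighbour.

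\textbf{Part (b).} In the construction, vertex degrees of $G$ only grow and those of $H_k$ are preserved or restored, so $F$ has minimum degree~2 whenever $G$ does. $F$ is not a cycle because $x$ has degree at least~3 in $F$.

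\textbf{Main obstacle.} The delicate step is keeping $F\LocInHom H$ locally injective at the attachment point while also keeping $F$ connected and of minimum degree~2. If the single-edge attachment fails, I would try first to replace a full lifted edge $y' z'$ of $H_k$ (with $z'$ over $f(x)$) by the edge $y'x$, leaving $z'$ with one fewer neighbour. The vertex $z'$ keeps degree at least~1, and by choosing $k$ large and the edge on a lifted cycle, $H_k$ minus that edge stays connected. If $z'$ then drops to degree~1 in case (b), I would instead use two copies of $G$ attached symmetrically, or attach $G$ at two missed positions, to restore minimum degree~2.
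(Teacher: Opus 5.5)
\textbf{Gap: connectivity of $F$ when the missed edge is a bridge.} The deletion at $y'$ is never optional. Since $H_k\LocBiHom H$ is a covering, $y'$ always has a neighbour $z'$ over $f(x)$. You must therefore delete $y'z'$, and this edge is forced to be a lift of the edge $yf(x)$ of $H$.

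If $yf(x)$ is a bridge of $H$, every lift of it is a bridge of $H_k$. The side of the bridge not containing the rerouted cycle edge lifts to $k$ disjoint copies, each attached only through its own lift. So you cannot ``choose it on a lifted cycle''.

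This happens under the hypotheses of both (a) and (b). Let $H$ consist of triangles $apq$ and $brs$ joined by the bridge $ab$, and let $G$ be a triangle mapped onto $apq$. The only deficient vertex is $x\mapsto a$, with missed neighbour $y=b$. Your $F$ is then disconnected. Its component containing $G$ is isomorphic to $H$ if the rerouted edge of $H_k$ lies in $apq$, and to $H_k$ if it lies in $brs$. Either way its degree refinement matrix equals $\drm(H)$, so discarding the other component does not rescue the argument.

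Your justification of $\drm(F)\neq\drm(H)$ also points at the wrong vertex. After adding $y'$, the vertex $x$, and indeed all of $G$, may become locally bijective, as in this example. The only guaranteed non-bijective vertex is $z'$, which is exactly the vertex cut off in the bridge case. Your listed fallbacks address the degree of $z'$, not this problem.

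\textbf{The non-bridge case and the paper's route.} When $yf(x)$ lies on a cycle of $H$, your argument does work:
\begin{itemize}
\item lifts of that cycle give cycles of $H_k$ through $y'z'$, so $F$ is connected;
\item $z'$ witnesses $\drm(F)\neq\drm(H)$ via Theorem~\ref{thm:degeq1};
\item in (b) no fallback is needed, since $\deg_H(f(x))>\deg_G(x)\ge 2$ gives $\deg_F(z')=\deg_H(f(x))-1\ge 2$.
\end{itemize}
Your claim that degrees in $H_k$ are ``preserved or restored'' is false at $z'$, but harmless there.

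The paper avoids covers altogether.
\begin{itemize}
\item For (a) it attaches a single pendant vertex to $x$, mapped to $y$. Then $\drm(F)\neq\drm(H)$ because $F$ has a leaf and $H$ does not.
\item For (b) it attaches a lasso at $x$. Its handle is mapped onto the edge $f(x)y$ followed by a shortest path from $y$ to a cycle $C$ in the component of $y$ in $H$ minus the edge $f(x)y$. Such a $C$ exists because $H$ has minimum degree~2 and is not a cycle. The lasso's cycle has length a large multiple of $|C|$ and wraps around $C$.
\end{itemize}
The long lasso cycle forces $\drm(F)$ to have large order, which separates it from both $\drm(G)$ and $\drm(H)$.
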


\begin{proof}
Fix a locally injective homomorphism $g:G\to H$.  Since $\drm(G)\neq \drm(H)$, the mapping $g$ is not locally bijective.  
Choose a vertex $u$ such that the induced map $N_G(u)\to N_H(g(u))$ is injective but not surjective, and choose a vertex $w\in N_H(g(u))\setminus g(N_G(u))$.

First construct $F$ as a copy of the graph $G$ extended by a single vertex $u'$ and a single edge $(u,u')$. 
We claim that $F$ satisfies the conditions given on $F$ by (a). 
Since $G$ is a subgraph of $F$, the embedding yields a locally injective homomorphism $G\LocInHom F$.
Also $F\LocInHom H$: extend $g$ to $f:F\to H$ by putting $f(u')=w$.  The choice of $w$ makes the map injective on $N_F(u)$, and local injectivity at the new vertex $u'$ is immediate. 
Moreover, the matrix $\drm(F)$ differs from $\drm(H)$ because $F$ has a vertex of degree 1 while $H$ has no vertex of degree 1.  
For the last claim suppose by a contradiction that $\drm(F) = \drm(G)$. 
From Theorem \ref{thm:degeq1} the embedding obtained from the strict inclusion would be a locally bijective homomorphism
from $G$ to $F$, but the required properties are violated on $N(u)$. 

Now we extend our construction to meet requirements of (b). In the graph $H'=H\setminus (g(u),w)$
we consider the component containing $w$. This component contains a cycle $C$ 
as otherwise the component would contain also a vertex of degree one and then
either $H$ would be isomorphic to a cycle (if the only other vertex of degree one would be $g(u)$) 
or the minimum degree of $H$ would be one (otherwise); both cases are in contradiction with our assumptions.
Let $c$ be the length of $C$ and $l$ be the distance of $w$ to the nearest vertex of $C$ in $H'$.

Choose an integer $c'$ to be a multiple of $c$ that is at least twice greater than the orders of matrices $\drm(G)$ and $\drm(H)$.
We now append to the so far constructed graph $F$ an extra copy of a cycle of length $c'$.  One of its vertices, say $v$, is joined to $u'$ by a path of length $l$; when $l=0$ this means that $v$ is identified with $u'$.
This subgraph attached to $u$ is an {\em $(l+1,c')$-lasso}, see Figure~\ref{fig:lasso}.

\begin{figure}
\centerline{\includegraphics{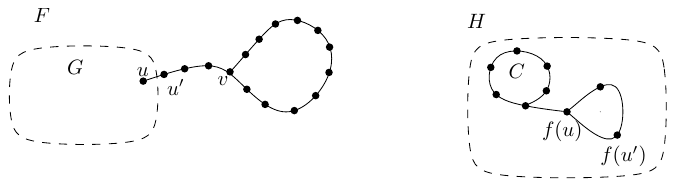}}
\caption{Left: the graph $F$ formed from $G$ by attaching a $(4,12)$-lasso to the vertex $u$.
Right: the choice of $C$ in $H$; note that $g(u)$ may be on the shortest path between 
$w$ and $C$ in $H'$ as indicated.}
\label{fig:lasso}
\end{figure}

As before, $G\LocInL F$ because $G\subseteq F$. 
The map $f$ can further be extended by mapping $v$ to the nearest vertex of $C$ to $w$ in $H'$, wrapping the cycle of the lasso $c'/c$ times around $C$, and mapping the path from $u'$ to $v$ onto a shortest path from $w$ to $C$ in $H'$.  When $l=0$, this path is empty and $u'=v$ is mapped to $w\in V(C)$.
Such an extension satisfies all local constraints.  At $u$ this follows from the choice of $w\notin g(N_G(u))$.  At $v$, the two cycle neighbors are mapped to the two neighbors on $C$, while the third neighbor is mapped either to the preceding vertex of the shortest path from $w$ to $C$ when $l>0$, or to $g(u)$ when $l=0$; in both cases the three images are distinct.  Thus $F\LocInL H$.

Since $v$ is the only vertex of degree 3 on the lasso, then 
two vertices of the lasso that have different distance to $v$ 
must belong to different blocks of any equitable partition.
Therefore the order of $\drm(F)$ is at least $\frac{c'}2$.
Due to the choice of $c'$, this value is greater than the orders of $\drm(G)$ and $\drm(H)$.
Consequently, $\drm(F)$ is identical neither with $\drm(G)$ nor with $\drm(H)$.
\end{proof}

\begin{corollary}[Infinite-chain density]
\label{cor:locin-infinite-chain}
Under the assumptions of Theorem~\ref{thm:density}(a), the interval
\[
\{F\mid G\LocInL F\LocInL H\}
\]
contains an infinite chain.  Under the additional assumptions of Theorem~\ref{thm:density}(b), the chain can be chosen among connected graphs of minimum degree at least two that are not cycles.
\end{corollary}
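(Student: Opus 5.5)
The plan is to iterate Theorem~\ref{thm:density}. Put $F_0=G$. Apply Theorem~\ref{thm:density}(a) to the pair $(F_0,H)$; its hypotheses hold because $\drm(G)\neq\drm(H)$, $G\LocInL H$, and $H$ has minimum degree~2. This produces a connected graph $F_1$ with $F_0\LocInL F_1\LocInL H$ and $\drm(F_1)\neq\drm(H)$.

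The pair $(F_1,H)$ again satisfies all hypotheses of part (a): $F_1$ is connected, $F_1\LocInL H$, $\drm(F_1)\neq\drm(H)$, and $H$ is unchanged. Applying the theorem again gives $F_2$ with $F_1\LocInL F_2\LocInL H$ and $\drm(F_2)\neq \drm(H)$. By induction we obtain a sequence
\[
G=F_0\LocInL F_1\LocInL F_2\LocInL\cdots\LocInL H .
\]
By transitivity, $F_i\LocInL F_j$ for all $i<j$.

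These relations are strict. Suppose $F_j\LocInLeq F_i$ for some $i<j$. Together with $F_i\LocInLeq F_j$ this makes $F_i$ and $F_j$ LI-equivalent, so by Corollary~\ref{cor:locin-core} they are isomorphic. That contradicts $F_i\LocInL F_{i+1}$, which is strict by construction. Hence $\{F_i\mid i\ge 1\}$ is an infinite chain of pairwise non-isomorphic connected graphs lying strictly between $G$ and $H$.

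For the second statement we additionally use part (b). Assume $G$ has minimum degree~2 and $H$ is not a cycle. Part (b) lets us choose each $F_{i+1}$ of minimum degree at least~2 and not a cycle, provided $F_i$ has minimum degree~2. So by induction every $F_i$ has minimum degree at least~2, and the hypotheses of (b) are preserved at every step. The graph $H$ stays fixed, so the requirement that $H$ is not a cycle is automatically inherited.

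The main point needing care is that the hypotheses of the theorem are preserved under iteration. The key one is $\drm(F_i)\neq\drm(H)$, which the theorem supplies explicitly. One small mismatch remains: the theorem asks for minimum degree exactly~2, while the construction only guarantees "at least 2". I would read "minimum degree~2" as "minimum degree at least~2" throughout, noting that the proof of Theorem~\ref{thm:density} uses only the absence of vertices of degree at most one. Since the $F_i$ built via (b) contain lasso vertices of degree~2, the minimum degree is in fact exactly~2 in any case.
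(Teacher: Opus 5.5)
Your proposal is correct and is essentially the paper's argument. Like the paper, you iterate Theorem~\ref{thm:density} with $H$ held fixed, observing that each new $F_i$ is connected, lies strictly below $H$, and satisfies $\drm(F_i)\neq\drm(H)$ (and, for part (b), has minimum degree two and is not a cycle), so the hypotheses are reproduced at every step. Your extra checks of strictness via Corollary~\ref{cor:locin-core} and of the ``minimum degree exactly~2'' reading are harmless refinements that the paper leaves implicit.
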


\begin{proof}
Apply Theorem~\ref{thm:density} to $(G,H)$ to obtain $G\LocInL F_1\LocInL H$ with $\drm(F_1)\neq\drm(H)$.  Since $H$ is unchanged and has minimum degree at least two, the theorem applies again to $(F_1,H)$, yielding $F_1\LocInL F_2\LocInL H$.  Iterating gives a strictly increasing chain
\[
G\LocInL F_1\LocInL F_2\LocInL F_3\LocInL\cdots\LocInL H.
\]
The additional hypotheses in part (b) are preserved at each step by the construction in Theorem~\ref{thm:density}(b).
\end{proof}

The constraints imposed on the graph $H$ may seem artificial. Nevertheless, they lead to full characterizations of graphs $H$ such that there is $G$, $\drm(G)\neq \drm(H)$ and $(G,H)$ is a gap.

It remains to explore the cases where $H$ has neither vertex of degree 1, nor a cycle with vertex of degree greater than 2.  We know that $H$ is not a tree and thus it contains a cycle. Because $H$ is connected it follows that all such graphs $H$ are cycles.
Graphs with locally injective homomorphism to a cycle are either cycles or paths.  When $G$ is a path of length $l$, the graphs $F$ can be chosen as a path of length $l+1$. It is easy to observe that $G\LocInHom F\LocInHom G$ and the degree matrices of $G,H$ and $F$ are mutually disjoint.
If $G$ is a cycle, we have $\drm(G)=\drm(H)$ and thus we cannot hope for a density result in general. As an example, consider $G$ to be a cycle of length $4$ and $H$ to be a cycle of length $8$.

Combining all results together we get:

\begin{thm}[Gaps]
Let $H$ be a connected graph.
\begin{itemize}
\item[(a)] There exists a connected graph $G$, such that $\drm(G)=\drm(H)$ and $(G,H)$ is a gap in $(\ConnGraph,\LocInLeq)$ if and only if $H$ contains a cycle.  In this case $G$ may be chosen as a cyclic $2$-fold cover of $H$.
\item[(b)] There exists a connected graph $G$, such that $\drm(G)\neq \drm(H)$ and $(G,H)$ is a gap if and only if $H$ has at least one vertex of degree 1.  In this case one may take $G=H\setminus v$ for any leaf $v$ of $H$.
\end{itemize}
\end{thm}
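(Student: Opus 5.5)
For part~(a) I would take the cyclic $2$-fold cover $G_2$ of $H$ and invoke Corollary~\ref{cor:locbigaps} with the prime $p=2$. This gives that $(G_2,H)$ is an LI-gap. The cyclic cover is connected and locally bijective onto $H$, so it has the same degree refinement matrix as $H$.

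For the converse of (a), suppose $\drm(G)=\drm(H)$ and $G\LocInL H$. Theorem~\ref{thm:degeq1} upgrades the locally injective map to a locally bijective one. If $H$ were a tree, its $\Matrixeq$-class would be trivial, so $G\cong H$, contradicting strictness. Hence $H$ contains a cycle.

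For part~(b), suppose first that $H$ has no leaf, so its minimum degree is at least $2$; I will show no gap $(G,H)$ with $\drm(G)\neq\drm(H)$ exists. If $H$ is not a cycle, Theorem~\ref{thm:density}(a) inserts an $F$ strictly between $G$ and $H$. If $H$ is a cycle, any $G\LocInL H$ with a different matrix is a path, and lengthening that path by one edge gives a graph strictly in between, as discussed before the statement. In both cases $(G,H)$ is not a gap.

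Conversely, let $v$ be a leaf of $H$ with neighbour $w$, and put $G=H\setminus v$. Then $G$ is connected and the inclusion is locally injective, so $G\LocInLeq H$. The matrices differ: if $\drm(G)=\drm(H)$, Theorem~\ref{thm:degeq1} would make the inclusion a cover, and Proposition~\ref{prop:folklorecover} would force $|V_H|$ to divide $|V_G|<|V_H|$, which is impossible. In particular $G\not\cong H$, so $G\LocInL H$ by Corollary~\ref{cor:locin-core}.

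It remains to prove that no connected $F$ satisfies $G\LocInL F\LocInL H$; I expect this to be the main obstacle, because locally injective maps do not bound sizes. My plan is to exploit Theorem~\ref{thm:locin-auto} through a ``re-attach the leaf'' trick. Given $\alpha:G\to F$ and $\beta:F\to H$, let $\varphi=\beta\circ\alpha:G\to H$, which is locally injective.

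First I show that $\varphi(w)$ has a neighbour in $H$ outside $\varphi(N_G(w))$. Since $\deg_G(w)=\deg_H(w)-1$, I need $\deg_H(\varphi(w))\geq\deg_H(w)$. If instead $\varphi$ is bijective at $w$ with $\deg_H(\varphi(w))=\deg_H(w)-1$, I would argue via a degree-sum or matrix comparison that this cannot happen everywhere consistently. I am least sure of this step. As a fallback I would choose the leaf $v$ cleverly, for instance one whose neighbour has maximum degree among leaf-neighbours.

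Once such a neighbour exists, extend $\varphi$ to $\hat\varphi:H\to H$ by sending $v$ to it. Then $\hat\varphi$ is locally injective, hence an automorphism by Theorem~\ref{thm:locin-auto}. Consequently $\varphi$ is injective and bijective at every vertex other than $w$.

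This rigidity should propagate to $\alpha$ and $\beta$. Local injectivity composes, so $\alpha$ must be bijective on neighbourhoods at every vertex except $w$, where it has a deficiency of at most one. Since $F$ is connected, any extra vertex of $F$ outside $\alpha(V_G)$ would have to hang off $\alpha(w)$. Then $\beta$ restricted to the image of $\alpha(G)$ together with that neighbour is forced. A final comparison shows that either $\alpha$ is an isomorphism, or $\alpha$ adds exactly one leaf and $F\cong H$. Both outcomes contradict strictness, so $(G,H)$ is a gap.
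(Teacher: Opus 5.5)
Part (a) and the direction ``no leaf $\Rightarrow$ no gap'' in (b) are correct and essentially follow the paper. That argument uses the cyclic $2$-fold cover together with Theorem~\ref{thm:degeq1} and Proposition~\ref{prop:folklorecover}, the triviality of tree classes, and Theorem~\ref{thm:density}(a), which already covers the case where $H$ is a cycle.

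The gap is in the remaining direction, at the step you flagged. The claim that $\varphi(w)$ has a neighbour outside $\varphi(N_G(w))$ is false for a general locally injective $\varphi:G\LocInHom H$. Take $H$ to be the path $1\,2\,3$ with $v=1$ and $w=2$, so $G$ is the edge $\{2,3\}$, and let $\varphi$ swap $2$ and $3$. This map is locally injective, yet $N_H(\varphi(w))=N_H(3)=\{2\}=\varphi(N_G(w))$.

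Since $\varphi=\beta\circ\alpha$ is an arbitrary composite, nothing in your argument excludes this case. When it occurs, $\hat\varphi$ does not exist and Theorem~\ref{thm:locin-auto} cannot be invoked.
\begin{itemize}
\item No degree-sum or matrix comparison can rule out a configuration that actually occurs.
\item Choosing the leaf cleverly does not help, since both leaves of a path play the same role. It also would not yield the clause ``for any leaf $v$''.
\item Precomposing $\alpha$ with an automorphism of $G$ repairs this example. In general, however, it would require showing that whenever $\varphi$ maps $G$ onto $H\setminus z$, the leaves $v$ and $z$ lie in the same $\mathrm{Aut}(H)$-orbit. That is a pseudo-similarity statement you have not addressed.
\end{itemize}

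The missing idea, which is what the paper uses, is to prove directly that every locally injective map $G\LocInHom H$ is an embedding, without reattaching $v$.
\begin{itemize}
\item If $H$ is a tree: locally injective maps send non-backtracking walks to non-backtracking walks. In a tree these walks are paths, so the map is injective.
\item If $H$ has a cycle: $G$ and $H$ have the same $2$-core $H'$ (the largest subgraph of minimum degree at least two), and $H$ is $H'$ with trees hanging at single vertices. No edge of $H'$ can be mapped into a hanging tree. Every edge of a graph of minimum degree two starts arbitrarily long non-backtracking walks in both directions. In a hanging tree, a non-backtracking walk heading away from the root can never turn back.
\item Hence $\varphi$ restricts to a locally injective self-map of the connected graph $H'$. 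By Theorem~\ref{thm:locin-auto} this restriction is an automorphism, and the hanging trees are then mapped injectively.
\end{itemize}

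Consequently $\beta\circ\alpha$ is injective with image $H\setminus z$ for some leaf $z$ of $H$. Your propagation argument then goes through, with the deficient vertex taken to be the preimage of the neighbour of $z$ instead of $w$. It shows that $\alpha$ is bijective on neighbourhoods everywhere else and that any extra vertex of $F$ maps to the leaf $z$, and it yields $F\cong G$ or $F\cong H$.
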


\begin{proof}
The equivalence class of an acyclic $H$ is trivial, which yields the forward implication of $(a)$.

In the opposite direction, we may choose $G$ as a cyclic 2-fold cover of $H$. 
Theorem \ref{thm:degeq1} implies any graph $F$ such that $G\LocInHom F\LocInHom H$ satisfies $\drm(F)=\drm(G)=\drm(H)$, 
and indeed $G\LocBiHom F\LocBiHom H$. The second claim of Proposition~\ref{prop:folklorecover} and the fact $|V_G|=2|V_H|$
yields that $F$ is isomorphic either to $G$ or $H$.

The backward implication of $(b)$ is a direct consequence of Theorem \ref{thm:density} (a) since 
no graph $H$ without vertices of degree one allows a gap $(G,H)$ where $\drm(G)\neq \drm(H)$.

In the other direction, let $v$ be the vertex of degree one in $H$. We show that $G=H\setminus v$ has the desired properties. 
Assume to the contrary that an $F$ exists such that $G\LocInL F\LocInL H$.

When $H$ is acyclic, then $F$ is also acyclic and both locally injective homomorphisms are embeddings.  
Consequently, $G$ is a proper subgraph of $F$, which is a proper subgraph of $H$ --- a contradiction with $|V_G|+1=|V_H|$.

Now assume that $H$ contains a cycle. Let $H'$ and $G'$, resp., be the largest induced subgraphs of $H$ and $G$, resp., containing no vertices of degree one. By the construction of $G$ we get that $G'$ and $H'$ are isomorphic. Moreover, the components induced by 
$E_H\setminus E_{H'}$ are trees as $H'$ can be obtained by iterative removal of vertices of degree one.
Consider any such component $T$ and any locally injective homomorphism $f:G\LocInHom H$.
We claim that no edge $e$ of $G'$ can be mapped by $f$ to an edge of $T$ --- as $e$ belongs to a closed trail in $G'$ and 
the image of a trail is also a trail, it cannot be mapped inside $T$, as $T$ is a tree.
Consequently $f$ maps $G'$ onto $H'$ and by Theorem~\ref{thm:locin-auto}, it is an isomorphism of these subgraphs.
Moreover, $f$ maps the components induced by $E_G\setminus E_{G'}$ injectively into components of $E_H\setminus E_{H'}$
so any $f: G\LocInHom H$ is an embedding.

Now consider any graph $F: G\LocInL F\LocInL H$ and homomorphisms $g: G\LocInHom F$, $h: F\LocInHom H$.
As $h\circ g: G\LocInHom H$ we get from the above argument that $h\circ g$ is an embedding, hence also $g$ is an embedding
and $h$ is an isomorphism between $g(G')$ and $H'$.
Observe that components of $E_F\setminus E(g(G'))$ are mapped by $h$ onto trees in $E_H\setminus E_{H'}$. 
By arguments analogous to those above we obtain that $h$ is also an embedding, which implies that $F$ is isomorphic either to $G$ or to $H$.
\end{proof}

Further developing these ideas, we now show --- as the main technical contribution of this section --- the universality of the
order given by locally injective homomorphisms between \emph{connected} graphs 
(compare with the simplicity of Corollary~\ref{cor:univgraphs} for the disconnected case).

\begin{thm}[Universality]
\label{thm:lochomouniv}
$(\ConnGraph,\LocInLeq)$ is a universal partial order.
\end{thm}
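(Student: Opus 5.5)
We plan to apply Theorem~\ref{thm:univ}: it suffices to find a future-finite-universal order $(F,\leq_F)$ and an embedding of the subset order $(\Pfin(F),\subsetLeq{F})$ into $(\ConnGraph,\LocInLeq)$. For $F$ we take cycles: by Proposition~\ref{pro:lochomo-univ}, $(\Cycle,\LocInLeq)$ is future-finite-universal, with $C_k\LocInHom C_l$ iff $l\mid k$. To stay inside a rigid family we restrict to cycles $C_{3m}$ with $m$ odd and large. Then we need a way to encode a finite antichain $X=\{C_{k_1},\dots,C_{k_t}\}$ of cycles by a single connected graph $G_X$ such that
\[
G_X\LocInHom G_Y \iff \hbox{for every } i \hbox{ there is } j \hbox{ with } C_{k_i}\LocInHom C_{l_j}.
\]

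The construction is a ``rolling'' gadget in the spirit of the lasso from Theorem~\ref{thm:density}. Fix a rigid connected core graph $R$ of minimum degree~3; for instance, a bipartite subcubic cactus-like backbone whose vertices carry distinguishable degree patterns. Then attach each cycle of $X$ to $R$ by a pendant path, producing lassos as in Figure~\ref{fig:lasso}. In $G_Y$ we attach all cycles of $Y$ in the same way, at the same attachment vertex, or along a path of attachment points. Cycles of length $3m$ with a single degree-3 attachment vertex must wrap around some cycle of the target.

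The sufficiency direction is easy. If every $C_{k_i}$ divides into some $C_{l_j}$, map $R$ identically, send each lasso handle to the corresponding handle, and wind the cycle $k_i/l_j$ times. Local injectivity at the attachment vertices is checked exactly as in the proof of Theorem~\ref{thm:density}. For the branching at the backbone we need the handles of distinct lassos in $G_X$ to land on distinct neighbours. To arrange this, we attach the lassos at distinct backbone vertices and, in $G_Y$, provide at each such position a ``hub'' that offers every $C_{l_j}$. Concretely, we let position $i$ of the backbone carry a small tree fanning out to all cycles of $Y$. Since $X$ ranges over finite sets, we instead attach one lasso per element along an unbounded backbone path, and make the backbone of $G_Y$ a long path so that the positions correspond. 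We expect this to need care, and bipartiteness and subcubicity must be maintained, for example by using even cycle lengths $2\cdot$odd and binary trees for fan-out.

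The main obstacle is necessity: showing that any locally injective $f:G_X\LocInHom G_Y$ respects the decomposition, so that backbone maps to backbone and each cycle of $X$ maps onto a single cycle of $Y$. We would argue as in the gap theorem above. The image of a closed trail is a closed walk without immediate reversals, because local injectivity forbids backtracking. In a cactus every such closed walk winds around a single cycle. Hence each cycle of $G_X$ wraps some cycle of $G_Y$, which forces divisibility of lengths. To prevent cycles of $X$ from landing on backbone-internal cycles, and the backbone from being shifted or folded, we plan to use length conditions. We take all gadget cycle lengths to be multiples of a large prime not dividing the backbone cycle lengths, and mark the backbone ends with distinctive short cycles, for instance cycles of lengths $4$ and $6$, which by Theorem~\ref{thm:locin-auto}-type rigidity pin the orientation. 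Establishing this rigidity cleanly is where we expect most of the work.
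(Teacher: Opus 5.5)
Your proposal has a genuine gap in each direction.

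\textbf{Sufficiency: the backbone cannot absorb many lassos.} Reducing to Theorem~\ref{thm:univ} forces you to realize the whole domination order $(\Pfin(F),\subsetLeq{F})$, including its many-to-one dominations, and your backbone design cannot do that. Take $Y=\{C_\ell\}$ and $X=\{C_{\ell q_1},\dots,C_{\ell q_t}\}$ for distinct odd primes $q_i$. Then $X\subsetLeq{F}Y$ for every $t$, so one fixed graph $G_Y$ must receive $G_X$ for arbitrarily large $t$.

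If the backbone is rigid, mapped identically and pinned at its ends by marker cycles, then the $i$-th lasso of $G_X$ needs its own position in $G_Y$, so $t$ is bounded in terms of $Y$. Making ``the backbone of $G_Y$ a long path so that the positions correspond'' is not available, since $G_Y$ cannot depend on $X$. Putting all lassos at one vertex would require degree at least $t$ in $G_Y$. A longer backbone of $G_X$ could only be absorbed if its image walk left the backbone of $G_Y$ and turned around inside cycles, which is exactly what your rigidity is meant to forbid. Note also that one graph $G_Z$ must serve both as source and as target, so you cannot give sources single lassos and targets fan-out hubs.

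The missing idea is a connector that itself rolls. In the paper the sub-gadgets $E(k)$ are attached to a cycle of length $3\cdot2^n$ at the positions $r_{3\cdot2^{k-1}}$. When this cycle wraps onto the cycle of length $3\cdot2^{n'}$, positions with $k<n'$ go to the same positions, and all positions with $k>n'$ collapse onto $r_0^{n'}$, where the joining path sits. The offset $a(n)$ and a strengthened induction make the pieces agree there.

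This realizes only the particular sets $g(x)=\{e(y)\mid y\leq_f x\}$ from the proof of Theorem~\ref{thm:univ}. For these sets every domination is ``identity on small indices, everything larger onto the top element''. That is why the paper interleaves the forward and backward orders in the on-line construction of $E(n)$ rather than embedding the full subset order.

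\textbf{Necessity: the rigidity claims are false.} Your claim that in a cactus every closed non-backtracking walk winds around a single cycle fails. Go around one cycle, along a connecting path of length $h$, around a second cycle, and back along the path. This walk is non-backtracking at every vertex, so it is a locally injective image of a cycle of length $|C|+|C'|+2h$.

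Likewise, a lasso cycle whose only degree-$3$ vertex is its attachment point is constrained only by non-backtracking at its other vertices, so it need not wrap a single cycle of $G_Y$. Such multi-cycle walks realize all sufficiently large lengths in suitable residue classes. Long cycles of $X$ therefore acquire unwanted images, and no length condition obviously prevents this.

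The paper closes this by padding every cycle vertex with a pendant vertex to degree exactly three, while all connecting vertices (the middle of the joining path and the identified pendant vertices) have degree two. Then degree-$3$ vertices must map to degree-$3$ vertices and each cycle covers a single cycle. The factor $3$ versus a prime greater than $3$ separates right cycles from left cycles, and the unique path of length two pins $r_0$ and $l_{a(n)}$.
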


Consider any partial order $(P,\leq_P)$. We construct an on-line
embedding from $(P,\leq_P)$ to $(\ConnGraph,\LocInLeq)$ by representing the
elements by cycles similarly as in Theorem~\ref{thm:cycles}. 
The main complication is that the cycles representing an element of $(P,\leq_P)$ have to be connected into a single graph without creating unwanted locally injective homomorphisms.

\begin{proof}
Without loss of generality we assume that $P=\{p_1,p_2,\ldots\}$ consists of prime numbers strictly greater than 3.  We use the usual order of the primes only to run the induction; the given partial order is denoted by $\leq_P$.
For every $n\in P$ put
\[
p(n)=\prod\{q\mid q\in P,\ q\leq n \hbox{ and } q\geq_P n\},
\qquad
 a(n)=p(n)2^{n-1}.
\]
For $n>n'$ in the usual order,
\[
 n\leq_P n' \quad\Longleftrightarrow\quad p(n')\mid p(n).
\]
Indeed, if $p(n')\mid p(n)$, then the prime $n'$ occurs in $p(n)$, so $n'\geq_P n$; the converse follows by transitivity of $\leq_P$.  The additional factor in $a(n)$ is chosen so that, whenever $n>n'$ and $p(n')\mid p(n)$,
\[
 p(n')2^{n'}\mid a(n).                                      
\]
This divisibility is the correction that makes the gluing argument below work.

For $n\in P$, let $H(n)$ be the graph formed from two cycles of lengths
\[
 L_n=p(n)2^n \quad \hbox{and} \quad R_n=3\cdot2^n,
\]
joined by a path of length 2.  We denote the vertices of the first cycle by
$l_0,l_1,\ldots,l_{L_n-1}$ and call it the left cycle; the vertices of the other cycle are
$r_0,r_1,\ldots,r_{R_n-1}$ and form the right cycle.  The joining path connects
$l_{a(n)}$ and $r_0$.  Finally, we add pendant vertices to all remaining cycle vertices so that every vertex on either cycle has degree three.

\begin{lem}\label{lem:sunlets}
For any $n,n'\in P$ with $n>n'$, a locally injective homomorphism
$H(n)\LocInHom H(n')$ exists if and only if $n\leq_P n'$.  Moreover, whenever such a homomorphism exists, every locally injective homomorphism $H(n)\LocInHom H(n')$ maps $l_0^n$ to $l_{a(n')}^{n'}$ and $r_0^n$ to $r_0^{n'}$.
\end{lem}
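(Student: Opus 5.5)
The plan is to pin down the possible images of the two cycles of $H(n)$ using degrees, then read off the divisibility conditions, and finally construct the homomorphism explicitly for the converse. Write $m^n$ for the middle vertex of the joining path of $H(n)$. In $H(n)$ every cycle vertex has degree $3$, $m^n$ has degree $2$, and the pendant vertices have degree $1$. A locally injective homomorphism $f$ maps a vertex of degree $d$ to a vertex of degree at least $d$. Hence:

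\begin{itemize}
\item Every cycle vertex of $H(n)$ goes to a cycle vertex of $H(n')$.
\item The vertex $m^n$ goes to a cycle vertex or to $m^{n'}$, never to a pendant.
\end{itemize}

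In $H(n')$ the cycle vertices induce exactly the two disjoint cycles, since the two cycles are connected only through $m^{n'}$. Consecutive vertices of a cycle of $H(n)$ are adjacent, so each cycle of $H(n)$ is mapped into a single cycle of $H(n')$. Local injectivity at a vertex of a cycle forces its two cycle neighbours to distinct images, so the image is a closed walk without backtracking. In a cycle such a walk winds around it in a fixed direction, i.e.\ it is a covering map. Thus a cycle of length $\ell$ maps to a cycle of length $k$ only if $k\mid \ell$, and then the map is a rotation or a reflection composed with reduction modulo $k$.

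Next I would rule out the crossed cases and obtain the order condition:

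\begin{itemize}
\item Left to right would need $3\cdot 2^{n'}\mid p(n)2^n$. This is impossible because all primes in $P$ exceed $3$.
\item Right to left would need $p(n')2^{n'}\mid 3\cdot2^n$. This is impossible because $n'\mid p(n')$ and $n'>3$.
\item So the left cycle covers the left cycle, which needs $p(n')2^{n'}\mid p(n)2^n$, i.e.\ $p(n')\mid p(n)$, i.e.\ $n\leq_P n'$ by the displayed equivalence for $n>n'$.
\item The right cycle covers the right cycle, which is automatic since $3\cdot2^{n'}\mid 3\cdot 2^n$.
\end{itemize}

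This gives the ``only if'' direction.

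For the ``moreover'' part, consider the image of $m^n$. Its neighbours $l^n_{a(n)}$ and $r^n_0$ go to a left cycle vertex and a right cycle vertex respectively, and these must both be adjacent to $f(m^n)$. The only vertex of $H(n')$ adjacent to both cycles is $m^{n'}$, so $f(m^n)=m^{n'}$. Consequently $f(l^n_{a(n)})=l^{n'}_{a(n')}$ and $f(r^n_0)=r^{n'}_0$. Since the left cycle is covered by a rotation or reflection, $f(l^n_i)=l^{n'}_{a(n')\pm(i-a(n))}$ with indices taken modulo $L_{n'}=p(n')2^{n'}$. Therefore $f(l^n_0)=l^{n'}_{a(n')\mp a(n)}$. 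The correction built into $a(n)$, namely $p(n')2^{n'}\mid a(n)$ whenever $n>n'$ and $p(n')\mid p(n)$, gives $a(n)\equiv 0$, so $f(l^n_0)=l^{n'}_{a(n')}$.

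For the converse, assume $n\leq_P n'$ and define $f$ explicitly:

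\begin{itemize}
\item $l^n_i\mapsto l^{n'}_{a(n')+i-a(n)}$ and $r^n_i\mapsto r^{n'}_i$, both with indices reduced modulo the target cycle length. This is well defined by the divisibilities above.
\item $m^n\mapsto m^{n'}$.
\item Each pendant vertex is mapped to the unique off-cycle neighbour of the image of its attachment vertex.
\end{itemize}

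Local injectivity is then checked vertex by vertex. At a cycle vertex, the two cycle neighbours go to the two cycle neighbours of the image, and the third neighbour goes to the third neighbour of the image. The third neighbour of $l^n_{a(n)}$ is $m^n$, sent to $m^{n'}$, which is the third neighbour of $l^{n'}_{a(n')}$; and $r^n_0$ behaves in the same way. Every other cycle vertex maps to a vertex that carries a pendant.

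I expect the main obstacle to be the careful justification that a cycle's image lies in one cycle and is a genuine covering. This needs the degree bound, to keep cycle vertices off $m^{n'}$ and off the pendants, together with the no-backtracking argument.
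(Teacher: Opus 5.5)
Your proof is correct and follows essentially the same route as the paper. Degrees force each cycle to cover a cycle. Divisibility by $3$ and by $n'>3$ rules out crossed coverings. The joining path is forced onto the joining path, and $L_{n'}\mid a(n)$ pins down $f(l^n_0)$; your converse map is the same explicit map as the paper's.

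One harmless slip: not every cycle vertex other than $l^n_{a(n)}$ and $r^n_0$ maps to a vertex carrying a pendant (e.g.\ $l^n_{a(n)+L_{n'}}\mapsto l^{n'}_{a(n')}$ and $r^n_{R_{n'}}\mapsto r^{n'}_0$). Since you send each pendant to the unique off-cycle neighbour of the image, such pendants simply go to $m^{n'}$, and local injectivity is unaffected.
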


\begin{proof}
The vertices of degree three in $H(n)$ lie exactly on the two cycles, and the remaining vertices have degree one or two.  Hence any locally injective homomorphism maps each cycle of $H(n)$ onto a cycle of $H(n')$.  Its restriction to a cycle is a covering of cycles; in particular, a cycle of length $L$ can be mapped locally injectively to a cycle of length $L'$ only when $L'$ divides $L$.

The left and right cycles cannot be interchanged.  The left cycle lengths are not divisible by 3, while every right cycle length is divisible by 3; hence a left cycle of $H(n)$ cannot cover the right cycle of $H(n')$.  Conversely, the left cycle of $H(n')$ has the prime divisor $n'>3$, whereas the right cycle of $H(n)$ has only the prime divisors 2 and 3; hence the right cycle of $H(n)$ cannot cover the left cycle of $H(n')$.  Thus every locally injective homomorphism $f:H(n)\LocInHom H(n')$ preserves the left-right position of the two cycles.

Let the superscripts indicate the ambient graph.  The unique path of length 2 between the two cycles in $H(n)$ must be mapped to the unique such path in $H(n')$.  Consequently
\[
 f(r_0^n)=r_0^{n'} \quad\hbox{and}\quad f(l_{a(n)}^n)=l_{a(n')}^{n'} .
\]
The restriction of $f$ to the left cycle therefore exists only if
\[
 L_{n'}=p(n')2^{n'} \mid p(n)2^n=L_n,
\]
which, because $n>n'$, is equivalent to $p(n')\mid p(n)$ and hence to $n\leq_P n'$.  Moreover, under this condition $a(n)=p(n)2^{n-1}$ is divisible by $L_{n'}$.  Therefore, regardless of the orientation of the covering of the left cycle,
\[
 f(l_0^n)=f(l_{a(n)}^n)=l_{a(n')}^{n'} .
\]

Conversely, assume $n\leq_P n'$, equivalently $p(n')\mid p(n)$.  Define $f$ on the cycle vertices by
\[
 f(l_i^n)=l_{(a(n')+i) \bmod L_{n'}}^{n'},\qquad
 f(r_i^n)=r_{i \bmod R_{n'}}^{n'} .
\]
Since $L_{n'}\mid a(n)$ and $R_{n'}\mid R_n$, this sends $l_{a(n)}^n$ to $l_{a(n')}^{n'}$ and $r_0^n$ to $r_0^{n'}$.  Map the middle vertex of the joining path to the middle vertex of the joining path of $H(n')$.  Finally, each pendant vertex adjacent to a cycle vertex $x$ is mapped to the unique neighbor of $f(x)$ not used by the two cycle neighbors of $x$.  This gives a homomorphism, and at every source vertex the images of its neighbors are distinct; hence the homomorphism is locally injective.
\end{proof}

\begin{figure}
\centerline{\includegraphics{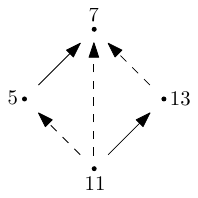}}
\caption{Partial order $(P,\leq_P)$.}
\label{fig:poset2}
\end{figure}
\begin{figure}
\centerline{\includegraphics{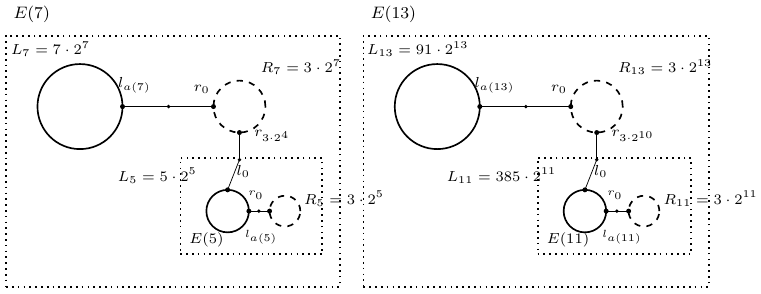}}
\caption{Representation of partial order $(P,\leq_P)$. Solid circles denote the left cycles and dashed circles the right cycles.  The marked joining path in $H(n)$ starts at $l_{a(n)}$ and ends at $r_0$; vertical attachments to earlier graphs use the vertices $r_{3\cdot 2^{k-1}}$.}
\label{fig:representation}
\end{figure}

We now complete the construction by encoding the forward order.
For any $n\in P$ we inductively construct a connected graph $E(n)$ from the disjoint union of $H(n)$ and all graphs $E(k)$ with $k<n$ and $k\leq_P n$.  The copies are connected as follows.  The notation $l_0^k$ always refers to the distinguished vertex $l_0$ in the top copy of $H(k)$ inside $E(k)$.  For each $k<n$ with $k\leq_P n$, identify the pendant neighbor of $l_0^k$ in $E(k)$ with the pendant neighbor of $r_{3\cdot2^{k-1}}^n$ in $H(n)$.

Graphs $E(p)$, $p\in P$, for the sample partial order $(P,\leq_P)$ shown in Figure~\ref{fig:poset2} are schematically depicted in Figure~\ref{fig:representation}.

The resulting graphs are cacti.  Their cycles are of two recognizable types: right cycles have length divisible by $3$, while left cycles have length divisible by a prime greater than $3$.  The bridge argument used in Lemma~\ref{lem:sunlets} shows that every locally injective homomorphism between two such graphs preserves this left-right distinction and maps cycles to cycles of the same type.

We prove by induction on the usual order of the primes that the assignment $n\mapsto E(n)$ embeds $(P,\leq_P)$ into $(\ConnGraph,\LocInLeq)$.  In the inductive construction we use the following strengthened form: whenever $x>y$ and $x\leq_P y$, the homomorphism $E(x)\LocInHom E(y)$ can be chosen so that its restriction to the top copy of $H(x)$ is the explicit map from Lemma~\ref{lem:sunlets}.

Let $n'<n$.

\begin{itemize}
\item If $n'\leq_P n$, then $E(n)$ contains a copy of $E(n')$ as a subgraph.  The corresponding embedding is locally injective, so $E(n')\LocInHom E(n)$.

\item If $n\leq_P n'$, then Lemma~\ref{lem:sunlets} gives the explicit map $f:H(n)\LocInHom H(n')$.  For each $k<n$ with $k\leq_P n$, transitivity gives $k\leq_P n'$, so by induction there is a locally injective homomorphism $f_{k,n'}:E(k)\LocInHom E(n')$.  We choose these maps compatibly with the attachments.

If $k<n'$, then $E(n')$ contains the attached copy of $E(k)$ at $r_{3\cdot2^{k-1}}^{n'}$, and we take $f_{k,n'}$ to be that embedding.  Since the explicit map on the right cycle satisfies
\[
 f(r_{3\cdot2^{k-1}}^n)=r_{3\cdot2^{k-1}}^{n'},
\]
the identified pendant neighbors match.

If $k>n'$, then
\[
 f(r_{3\cdot2^{k-1}}^n)=r_0^{n'},
\]
because $3\cdot2^{k-1}$ is divisible by $R_{n'}=3\cdot2^{n'}$.  By the strengthened induction hypothesis, $f_{k,n'}$ maps $l_0^k$ to $l_{a(n')}^{n'}$, and therefore maps the pendant neighbor of $l_0^k$ to the middle vertex of the joining path between $l_{a(n')}^{n'}$ and $r_0^{n'}$.  This is also the image under $f$ of the pendant neighbor of $r_{3\cdot2^{k-1}}^n$.  Thus the maps agree on the identified vertex.  Combining $f$ with all maps $f_{k,n'}$ gives a locally injective homomorphism $E(n)\LocInHom E(n')$ with the required strengthened property.

\item Finally, suppose $n$ and $n'$ are incomparable in $(P,\leq_P)$.  We show that no locally injective homomorphism exists in either direction.  First consider a possible map $E(n')\LocInHom E(n)$.  The top left cycle of $H(n')$ would have to map to a left cycle of some copy $H(k)$ contained in $E(n)$.  If $k>n'$, then the target left cycle has a higher power of $2$ in its length and cannot divide the source length.  If $k<n'$, then Lemma~\ref{lem:sunlets} would imply $n'\leq_P k$; since every such $k$ occurring in $E(n)$ satisfies $k\leq_P n$, transitivity would give $n'\leq_P n$, a contradiction.  If $k=n'$, then the construction of $E(n)$ already implies $n'\leq_P n$, again a contradiction.  Hence $E(n')\LocInHom E(n)$ does not exist.

For the opposite direction, suppose $E(n)\LocInHom E(n')$.  The top left cycle of $H(n)$ must map to a left cycle of some $H(k)$ contained in $E(n')$.  Since every such $k$ satisfies $k\leq_P n'$ and $k<n$, Lemma~\ref{lem:sunlets} gives $n\leq_P k$.  Transitivity gives $n\leq_P n'$, a contradiction.  Hence no mapping $E(n)\LocInHom E(n')$ exists.
\end{itemize}
This completes the induction and proves that $E(x)\LocInHom E(y)$ holds exactly when $x\leq_P y$.  Therefore $(\ConnGraph,\LocInLeq)$ is universal.
\end{proof}

\begin{corollary}
\label{cor:locin-cactus-universal}
The partial order induced by locally injective homomorphisms on finite connected bipartite cactus graphs of maximum degree at most three is universal.  Consequently, universality already holds within finite connected planar graphs of treewidth at most two and maximum degree at most three.
\end{corollary}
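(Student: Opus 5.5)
The plan is to reuse, unchanged, the embedding $n\mapsto E(n)$ from the proof of Theorem~\ref{thm:lochomouniv}, and to check that every graph $E(n)$ lies in the smaller class. The order on the subclass is the restriction of $\LocInLeq$, and the proof of Theorem~\ref{thm:lochomouniv} shows that $E(x)\LocInHom E(y)$ holds exactly when $x\leq_P y$. Hence, once every $E(n)$ is a finite connected bipartite subcubic cactus, the same map embeds an arbitrary countable partial order into the restricted order, and universality follows. Connectedness and finiteness of $E(n)$ are already established by the construction. So it remains to verify three properties of $E(n)$, by induction on $n$ in the usual order of the primes: the cactus property, maximum degree at most three, and bipartiteness.

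\emph{Cactus.} $H(n)$ is a cactus: two cycles joined by a path, with pendant vertices attached. $E(n)$ is obtained from $H(n)$ and cacti $E(k)$, $k<n$, by identifying one pendant vertex of $E(k)$ with one pendant vertex of $H(n)$, separately for each $k$. Each such identification is a one-vertex gluing of two disjoint connected graphs, so it creates no new cycle. Consequently every block of $E(n)$ remains an edge or one of the original cycles, and $E(n)$ is a cactus.

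\emph{Maximum degree.} Cycle vertices have degree exactly three by construction; the middle vertex of each joining path has degree two; pendant vertices have degree one. An identified vertex is the union of two pendant vertices, so it has degree two. The step needing care is that no pendant vertex is used twice.
\begin{itemize}
\item In $H(n)$, the attachment points $r^n_{3\cdot 2^{k-1}}$ for distinct $k<n$ are distinct. They are different from $r^n_0$, since $0<3\cdot2^{k-1}<3\cdot 2^n=R_n$, so each still has a pendant neighbor.
\item In each attached copy of $E(k)$, the vertex $l^k_0$ differs from $l^k_{a(k)}$, because $0<a(k)=p(k)2^{k-1}<L_k$. Hence $l^k_0$ carries a pendant vertex.
\item That pendant vertex of $l^k_0$ is not used by attachments inside $E(k)$, since those only use pendants at right-cycle vertices of the top copy $H(k)$.
\end{itemize}
Each attached copy of $E(k)$ is a separate copy, so these pendants are all distinct, and the maximum degree is at most three.

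\emph{Bipartite and consequences.} Every cycle of $E(n)$ is a left or right cycle of some $H(k)$, of length $p(k)2^k$ or $3\cdot2^k$, which is even. A cactus all of whose cycles are even is bipartite, because each block is bipartite and the blocks are glued along a tree structure at cut vertices. Finally, cacti are planar and have treewidth at most two, since every block has treewidth at most two. This gives the second sentence of the corollary.

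I expect the only real obstacle to be the pendant-vertex bookkeeping in the degree argument above, that is, ensuring no cycle vertex receives a second attachment. The index inequalities listed there settle it.
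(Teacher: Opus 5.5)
Your proposal is correct and follows the same route as the paper: you reuse the embedding $n\mapsto E(n)$ from Theorem~\ref{thm:lochomouniv}, check that each $E(n)$ is a cactus with only even cycles and maximum degree three, and note that cacti are planar of treewidth at most two. Your explicit check that no pendant vertex is used in two identifications, via $0<3\cdot2^{k-1}<R_n$ and $0<a(k)<L_k$, makes precise a step the paper only asserts.
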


\begin{proof}
In the proof of Theorem~\ref{thm:lochomouniv}, every representing graph $E(n)$ is obtained by joining cycles and pendant edges in a cactus-like way.  No two cycles share an edge, and every block is either an edge or a cycle; hence each $E(n)$ is a cactus.  The cycle lengths are
\[
p(n)2^n \quad\hbox{and}\quad 3\cdot 2^n,
\]
and are therefore even, so each $E(n)$ is bipartite.  Cycle vertices have degree at most three, the vertices created by identifying pendant neighbors have degree two, and all remaining pendant vertices have degree one; hence the maximum degree is at most three.  Cactus graphs are planar and have treewidth at most two.  Since the embedding constructed in Theorem~\ref{thm:lochomouniv} uses only these graphs, the restricted order is universal.
\end{proof}

\begin{prop}[Dualities]
For a finite set of undirected connected graphs $\mathcal{D}$ there is a finite set of undirected connected graphs
$\mathcal{F}$ such that $(\mathcal{F},\mathcal{D})$ is a generalized finite LI-duality pair
if and only if $\mathcal{D}$ consists of trees.  Moreover, if $n=\max_{D\in\mathcal{D}} |V_D|$, then $\mathcal{F}$ may be chosen to consist only of trees on at most $n+1$ vertices.
\end{prop}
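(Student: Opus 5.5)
The plan rests on one structural fact about locally injective homomorphisms into trees, which I would prove first. If $G$ is connected and $f:G\LocInHom T$ with $T$ a tree, then $G$ is a tree and $f$ is injective. Local injectivity at a vertex $v$ forbids $f(u)=f(u')$ for distinct neighbours $u,u'$ of $v$, so $f$ maps every non-backtracking walk of $G$ to a non-backtracking walk of $T$. A cycle of $G$ would therefore give a closed non-backtracking walk in $T$, which is impossible. A path of $G$ maps to a non-backtracking walk in a tree, which is a path, so distinct vertices have distinct images. Consequently, for a finite family $\mathcal{D}$ of trees, the down-set $Y=\bigcup_{D\in\mathcal{D}}\downarrow D$ in $(\ConnGraph,\LocInLeq)$ consists exactly of the subtrees of members of $\mathcal{D}$. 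In particular it is finite and contains only trees on at most $n$ vertices.

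For the backward implication I would let $\mathcal{F}$ be the set of trees on at most $n+1$ vertices that lie outside $Y$, and then prune $\mathcal{F}$ to its minimal elements. Trivially no member of $\mathcal{F}$ maps to $\mathcal{D}$. Conversely, let $G$ be a connected graph outside $Y$; I distinguish three cases.
\begin{itemize}
\item If $G$ is a tree on at most $n+1$ vertices, then $G$ itself belongs to $\mathcal{F}$.
\item If $|V_G|\geq n+1$, a spanning tree of $G$ contains a subtree $F$ on exactly $n+1$ vertices. The inclusion $F\subseteq G$ is locally injective, and $F\notin Y$ by size.
\item If $G$ has at most $n$ vertices but contains a cycle, I would pass to the universal cover of $G$, which is an infinite tree. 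Take a subtree $F$ of it on $n+1$ vertices and restrict the covering projection to $F$; this restriction is locally injective, so $F\LocInHom G$, and again $F\notin Y$ by size.
\end{itemize}
This gives the duality together with the bound of $n+1$ vertices.

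For the forward implication, suppose some $D_0\in\mathcal{D}$ contains a cycle and that a finite $\mathcal{F}$ is given. I aim to construct a connected graph $G$ that maps to no member of $\mathcal{D}$ but admits $F\LocInHom G$ for some $F\in\mathcal{F}$; this contradicts the duality. The idea is to exploit that $\downarrow D_0$ is infinite and contains arbitrarily large trees, namely finite subtrees of the universal cover of $D_0$. Let $m$ exceed $|V_F|$ for every $F\in\mathcal{F}$. I would take a graph $G_m$ that "locally looks like" $D_0$ on every ball of radius $m$, yet maps locally injectively to no member of $\mathcal{D}$. When $D_0$ is a cycle $C_k$, a candidate is a long cycle $C_q$ with $q$ a large prime; in general, a cyclic cover of $D_0$ with one edge twisted so that the global closing condition fails. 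Choosing $q$ to avoid all closed non-backtracking walk lengths in $\mathcal{D}$ is where care is needed, and could fail. The cleaner invariant is then to use the non-past-finiteness and the gap structure (Corollary~\ref{cor:locbigaps} and Theorem~\ref{thm:density}) to show that $(\ConnGraph\setminus Y)$ has infinitely many minimal elements. Because the order is not past-finite, Corollary~\ref{cor:pastfiniteduals} cannot be applied verbatim, so the argument has to be done by hand.

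I expect this forward (only-if) direction to be the main obstacle: producing a witness $G\notin Y$ whose small locally-injective preimages all lie in $Y$. I would first try the minimal-elements formulation and show that each obstruction $F\in\mathcal{F}$ must itself map locally injectively into $G_m$. Since $F$ has fewer than $m$ vertices, it can only see a ball of $G_m$ that is locally isomorphic to a piece of the universal cover of $D_0$, so any such map would factor into $D_0$. This gives $F\in Y$, a contradiction.
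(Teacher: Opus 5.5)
Your backward direction is correct and is essentially the paper's argument. You take the trees on at most $n+1$ vertices outside $Y$ and keep the minimal ones; a large or cyclic graph then receives a tree on $n+1$ vertices (you use the universal cover where the paper winds a path around a cycle). Your lemma that a locally injective map from a connected graph into a tree is injective is exactly what justifies the paper's remark that the order on trees is an embedding order.

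\textbf{The gap is in the forward direction: you never produce a witness.} Because the image of a connected graph is a connected subgraph, a finite $\mathcal{F}$ exists if and only if there is an $s$ such that every connected $G\notin Y$ has a connected subgraph on at most $s$ vertices outside $Y$. So you must exhibit, for every $k$, a connected $G_k\notin Y$ all of whose connected subgraphs on fewer than $k$ vertices lie in $Y$. Your closing factorization through the universal cover is sound once a large-girth $G_m$ of the kind you describe exists, but existence is the whole difficulty, and it can fail.

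Let $D_0$ be a triangle and a $4$-cycle sharing one vertex.
\begin{itemize}
\item Every $q\geq 6$ equals $3a+4b$. Winding $a$ times around the triangle and $b$ times around the $4$-cycle, with repeated turns in the same direction, gives $C_q\LocInHom D_0$. So long prime cycles are useless once $D_0\in\mathcal{D}$.
\item Worse, every connected graph with $\drm(D_0)$ arises from a $4$-regular multigraph (loops allowed) whose edges split into two $2$-factors: subdivide the edges of one factor into paths of length $3$ and those of the other into paths of length $4$. Orienting every cycle of both factors yields a covering onto $D_0$.
\end{itemize}
Hence every graph that locally looks like the universal cover of $D_0$ lies in $\downarrow D_0$, and for $\mathcal{D}=\{D_0\}$ no ``twisted cover'' $G_m$ exists. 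The paper's one-line restriction to the degree-refinement class of a cyclic member of $\mathcal{D}$ meets the same obstacle, since that whole class lies in $\downarrow D_0$. So it cannot be borrowed to fill this step.

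Your fallback via minimal elements also fails. For $\mathcal{D}=\{C_3\}$ the complement of $Y$ has exactly one minimal element, $K_{1,3}$. Every $C_k$ with $3\nmid k$ lies strictly above $C_{2k}$, which is still outside $Y$. There the absence of a duality comes from infinite descending chains, not from infinitely many minimal elements. The gap results you cite say nothing about minimal elements of the complement.

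In general the witness must come from outside the class of $D_0$ and use a global parity-type obstruction. For the $D_0$ above, an odd ``necklace'' works:
\begin{itemize}
\item Take vertices $v_1,\dots,v_k$, each on its own $7$-cycle, with $v_i$ and $v_{i+1}$ (indices mod $k$) joined by a path of length $12$.
\item Each $v_i$ must map to the degree-$4$ vertex. A $7$-cycle uses one triangle-side and one square-side neighbor there. A $12$-path winds only around the triangle or only around the square.
\item So the types of the $12$-paths alternate around the necklace, which is impossible for odd $k$.
\item Yet deleting any vertex of the long cycle, which has $12k$ vertices, leaves components that map to $D_0$. So every connected subgraph on fewer than $12k$ vertices lies in $Y$.
\end{itemize}
A construction of this sort, for an arbitrary $\mathcal{D}$ containing a graph with a cycle, is the missing idea.
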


\begin{proof}
Every duality pair must also be a duality pair when restricted to a given
equivalence class of degree refinement matrices.  By the same argument as in
Proposition~\ref{prop:locbidual} this is not possible for infinite equivalence classes.
On the other hand, finite equivalence classes correspond to acyclic connected graphs, that is, to trees.

Let $\mathcal{D}$ be a finite set of trees, and let $n$ be the maximum number of vertices of a graph in $\mathcal{D}$.
Let $\mathcal{F}$ be the set of all trees $T$ on at most $n+1$ vertices such that $T\LocInHom D$ for no $D\in\mathcal{D}$, and keep only the minimal members of this finite set.

Because the locally injective homomorphism order restricted to trees
is an embedding order, this set gives the required duality on trees.  If a graph $G$ contains a cycle, then it admits a locally injective homomorphism from a path on $n+1$ vertices.  This path belongs to $\mathcal{F}$, since no such path can be locally injectively mapped into a tree with at most $n$ vertices.  Thus the left-hand obstructions can all be chosen as trees on at most $n+1$ vertices.
\end{proof}

\section{Concluding remarks}

The comparison developed here shows that many constrained homomorphism orders are governed by simple one-sided finiteness principles.  Monomorphisms, embeddings and full homomorphisms are essentially past-finite after passing to their cores, while vertex-surjective, edge-surjective and surjective homomorphisms are future-finite.  These observations explain the existence of many finite dualities and also show why several of these orders cannot reproduce the full behavior of the ordinary homomorphism order.

The locally constrained orders are more delicate.  Locally surjective and locally bijective homomorphisms behave like covering maps on connected graphs and are future-finite inside the relevant degree-refinement classes.  Locally injective homomorphisms are closer to ordinary homomorphisms: all connected graphs are cores, the order is universal on connected graphs, and a density phenomenon holds across degree-refinement classes under natural hypotheses.

Several problems remain open.  The characterization of gaps in the locally injective order is only partial, and the constructions used here suggest that a complete description may depend on the interaction between degree refinement and covering-like behavior.  It would also be interesting to extend the unified approach to further variants of graph homomorphisms, including homomorphisms of relational structures, signed graphs, monounary algebras, line graph orders, minor-like graph orders and more recent quantum variants~\cite{NaserasrSenSopena2020SignedOrder,Jakubikova2021Monounary}.

\section*{Statements and Declarations}

\textbf{Funding}
Ji\v r\'\i{} Fiala was supported by M\v{S}MT \v{C}R grant LH12095 and GA\v{C}R grant P202/12/G061. Jan Hubi\v{c}ka was supported by grant ERC-CZ LL-1201 of the Czech Ministry of Education and CE-ITI P202/12/G061 of GA\v CR. Yangjing Long was supported by the Open Project Fund of Hubei Key Laboratory of Mathematical Physics.

\textbf{Competing interests}
The authors declare that they have no competing interests.

\textbf{Data availability}
No datasets were generated or analysed during the current study.

\textbf{Author contributions}
All authors contributed to the conception and development of the results, the writing of the manuscript, and the review of the final version.

\textbf{Use of AI tools}
During the preparation of this manuscript, the authors used AI-assisted editing tools for language polishing and structural organization. The authors reviewed and edited all content and take full responsibility for the final manuscript.

\begingroup
\setlength{\bibsep}{0pt plus 0.3ex}
\bibliography{constrainedhomo-order}

@InProceedings{Fiala2001,
  Title                    = {Complexity of Partial Covers of Graphs},
  Author                   = {Ji{\v{r}}\'{\i} Fiala and Jan Kratochv\'{\i}l},
  Booktitle                = {ISAAC},
  Year                     = {2001},
  Pages                    = {537--549},

  Bibsource                = {DBLP, http://dblp.uni-trier.de},
  Crossref                 = {DBLP:conf/isaac/2001},
  Ee                       = {http://dx.doi.org/10.1007/3-540-45678-3_46},
  doi = {10.1007/3-540-45678-3_46},
  Address = {Berlin, Heidelberg}
}

@InProceedings{Fiala2005,
  Title                    = {Matrix and Graph Orders Derived from Locally Constrained Graph Homomorphisms},
  Author                   = {Ji{\v{r}}\'{\i} Fiala and Dani{\"e}l Paulusma and Jan Arne Telle},
  Booktitle                = {MFCS},
  Year                     = {2005},
  Pages                    = {340--351},

  Bibsource                = {DBLP, http://dblp.uni-trier.de},
  Crossref                 = {DBLP:conf/mfcs/2005},
  Ee                       = {http://dx.doi.org/10.1007/11549345_30},
  doi = {10.1007/11549345_30},
  Address = {Berlin, Heidelberg}
}

@InProceedings{Kristiansen2000,
  Title                    = {Generalized {\it H}-Coloring of Graphs},
  Author                   = {Petter Kristiansen and Jan Arne Telle},
  Booktitle                = {ISAAC},
  Year                     = {2000},
  Pages                    = {456--466},

  Bibsource                = {DBLP, http://dblp.uni-trier.de},
  Crossref                 = {DBLP:conf/isaac/2000},
  Ee                       = {http://dx.doi.org/10.1007/3-540-40996-3_39},
  doi = {10.1007/3-540-40996-3_39},
  Address = {Berlin, Heidelberg}
}

@InCollection{Samal2013,
  Title                    = {Cycle-continuous mappings — order structure},
  Author                   = {{\v{S}}{\'{a}}mal, Robert},
  Booktitle                = {The Seventh European Conference on Combinatorics, Graph Theory and Applications},
  Publisher                = {Scuola Normale Superiore},
  Year                     = {2013},
  Editor                   = {Ne\v{s}et\v{r}il, Jaroslav and Pellegrini, Marco},
  Pages                    = {513--520},
  Series                   = {CRM Series},
  Volume                   = {16},
  doi = {10.1007/978-88-7642-475-5_81},
  ISBN                     = {978-88-7642-474-8},
  Language                 = {English},
  Url                      = {http://dx.doi.org/10.1007/978-88-7642-475-5_81},
  Address = {Pisa}
}

@Article{Ahlswede1995,
  Title                    = {A splitting property of maximal antichains},
  Author                   = {Ahlswede, Rudolf and Erd\H{o}s, P\'{e}ter L. and Graham, Niall},
  Journal                  = {Combinatorica},
  Year                     = {1995},
  Number                   = {4},
  Pages                    = {475--480},
  Volume                   = {15},
  doi = {10.1007/BF01192520},
  ISSN                     = {0209-9683},
  Language                 = {English},
  Publisher                = {Springer-Verlag},
  Url                      = {http://dx.doi.org/10.1007/BF01192520}
}

@Article{Ball2010,
  Title                    = {Dualities in full homomorphisms},
  Author                   = {Richard N. Ball and Jaroslav Ne\v{s}et\v{r}il and Ale\v{s} Pultr},
  Journal                  = {European Journal of Combinatorics},
  Year                     = {2010},
  Number                   = {1},
  Pages                    = {106--119},
  Volume                   = {31},

  Bibsource                = {DBLP, http://dblp.uni-trier.de},
  Ee                       = {http://dx.doi.org/10.1016/j.ejc.2009.04.004},
  doi = {10.1016/j.ejc.2009.04.004}
}

@Article{Ball2007,
  Title                    = {Finite Dualities, in Particular in Full Homomorphisms},
  Author                   = {Richard N. Ball and Jaroslav Ne\v{s}et\v{r}il and Ale\v{s} Pultr},
  Journal                  = {KAM-DIMATIA series},
  Year                     = {2007},
  Volume                   = {835}
}

@Article{Bass1990,
  Title                    = {Uniform tree lattices},
  Author                   = {Bass, Hyman and Kulkarni, Ravi},
  Journal                  = {Journal of the American Mathematical Society},
  Year                     = {1990},
  Number                   = {4},
  Pages                    = {843--902},
  Volume                   = {3},

  Publisher                = {JSTOR}
}

@Article{Bodirsky2012,
  Title                    = {The complexity of surjective homomorphism problems---a survey},
  Author                   = {Manuel Bodirsky and Jan K\'ara and Barnaby Martin},
  Journal                  = {Discrete Applied Mathematics},
  Year                     = {2012},
  Number                   = {12},
  Pages                    = {1680--1690},
  Volume                   = {160},
  doi = {10.1016/j.dam.2012.03.029},
  ISSN                     = {0166-218X},
  Url                      = {http://www.sciencedirect.com/science/article/pii/S0166218X12001333}
}

@article{Britz2001,
  title={Partially ordered sets},
  author={Britz, Thomas and Cameron, Peter},
  journal={J. of Formalized Mathematics},
  volume={1},
  year={2002}
}

@Article{Droste2005,
  Title                    = {Universal homogeneous causal sets},
  Author                   = {Droste, Manfred},
  Journal                  = {Journal of Mathematical Physics},
  Year                     = {2005},
  Number                   = {12},
  Pages                    = {10pp},
  Volume                   = {46},
  doi = {10.1063/1.2147607},
  Eid                      = {122503},
  Url                      = {http://scitation.aip.org/content/aip/journal/jmp/46/12/10.1063/1.2147607}
}

@Article{Feder2008,
  Title                    = {On realizations of point determining graphs, and obstructions to full homomorphisms},
  Author                   = {Tom{\'a}s Feder and Pavol Hell},
  Journal                  = {Discrete Mathematics},
  Year                     = {2008},
  Number                   = {9},
  Pages                    = {1639--1652},
  Volume                   = {308},

  Bibsource                = {DBLP, http://dblp.uni-trier.de},
  Ee                       = {http://dx.doi.org/10.1016/j.disc.2006.11.026},
  doi = {10.1016/j.disc.2006.11.026}
}

@Unpublished{Fiala,
  Title                    = {Constrained homomorphism orders},
  Author                   = {Fiala, Ji{\v{r}}\'{\i} and Hubi\v{c}ka, Jan and Long, Yangjing},
  Note                     = {In preparation}
}

@Article{Fialab,
  Title                    = {Universality of intervals of line graph order},
  Author                   = {Fiala, Ji{\v{r}}{\'\i} and Hubi{\v{c}}ka, Jan and Long, Yangjing},
  Journal                  = {European Journal of Combinatorics},
  Year                     = {2014},
  Pages                    = {221--231},
  Volume                   = {41},

  Publisher                = {Elsevier}
}

@Article{Fiala2008,
  Title                    = {Locally constrained graph homomorphisms --- structure, complexity, and applications},
  Author                   = {Ji{\v{r}}\'{\i} Fiala and Jan Kratochv\'{\i}l},
  Journal                  = {Computer Science Review},
  Year                     = {2008},
  Number                   = {2},
  Pages                    = {97--111},
  Volume                   = {2},

  Bibsource                = {DBLP, http://dblp.uni-trier.de},
  Ee                       = {http://dx.doi.org/10.1016/j.cosrev.2008.06.001},
  doi = {10.1016/j.cosrev.2008.06.001}
}

@Article{Fiala2006,
  Title                    = {Cantor-{B}ernstein type theorem for locally constrained graph homomorphisms},
  Author                   = {Ji{\v{r}}\'{\i} Fiala and Jana Maxov{\'a}},
  Journal                  = {European Journal of Combinatorics},
  Year                     = {2006},
  Number                   = {7},
  Pages                    = {1111--1116},
  Volume                   = {27},

  Bibsource                = {DBLP, http://dblp.uni-trier.de},
  Ee                       = {http://dx.doi.org/10.1016/j.ejc.2006.06.003},
  doi = {10.1016/j.ejc.2006.06.003}
}

@article{FialaFractal,
  title={Fractal property of the graph homomorphism order},
  author={Fiala, Ji{\v{r}}{\'\i} and Hubi{\v{c}}ka, Jan and Long, Yangjing and Ne{\v{s}}et{\v{r}}il, Jaroslav},
  journal={European Journal of Combinatorics},
  volume={66},
  pages={101--109},
  year={2017},
  publisher={Elsevier}
}

@Article{Foniok2008,
  Title                    = {Generalised dualities and maximal finite antichains in the homomorphism order of relational structures},
  Author                   = {Jan Foniok and Jaroslav Ne{\v{s}}et{\v{r}}il and Claude Tardif},
  Journal                  = {European Journal of Combinatorics},
  Year                     = {2008},
  Number                   = {4},
  Pages                    = {881--899},
  Volume                   = {29},

  Bibsource                = {DBLP, http://dblp.uni-trier.de},
  Ee                       = {http://dx.doi.org/10.1016/j.ejc.2007.11.017},
  doi = {10.1016/j.ejc.2007.11.017}
}

@Article{Hedrlin1969,
  Title                    = {On universal partly ordered sets and classes.},
  Author                   = {Zen\v{e}k Hedrl\'\i{}n},
  Journal                  = {J. Algebra},
  Year                     = {1969},
  Pages                    = {503--509},
  Volume                   = {11(4)}
}

@article{hell2013,
  title={Point determining digraphs,$\{$0, 1$\}$-matrix partitions, and dualities in full homomorphisms},
  author={Hell, Pavol and Hern{\'a}ndez-Cruz, C{\'e}sar},
  journal={Discrete Mathematics},
  volume={338},
  number={10},
  pages={1755--1762},
  year={2015},
  publisher={Elsevier}
}

@Book{Hell2004,
  Title                    = {Graphs and homomorphisms},
  Author                   = {Pavol Hell and Jaroslav Ne\v{s}et{\v{r}}il},
  Publisher                = {Oxford University Press},
  Year                     = {2004},
  Series                   = {Oxford lecture series in mathematics and its applications},

  ISBN                     = {9780198528173},
  Lccn                     = {2004533593},
  Url                      = {http://books.google.de/books?id=bJXWV-qK7kYC},
  Address = {Oxford}
}

@InCollection{Hubicka2011,
  Booktitle                    = {Model Theoretic Methods in Finite Combinatorics},
  Author                   = {Jan Hubi{\v{c}}ka and Jaroslav Ne{\v{s}}et{\v{r}}il},
  Title                  = {Some examples of universal and generic partial orders},
  Editor                   = {Martin Grohe and Johann A. Makowsky},
  Pages                    = {293--318},
  Publisher                = {American Mathematical Society},
  Year                     = {2011},
  Address = {Providence, RI}
}

@Article{Hubicka2005,
  Title                    = {Universal partial order represented by means of oriented trees and other simple graphs},
  Author                   = {Jan Hubi\v{c}ka and Jaroslav Ne\v{s}et\v{r}il},
  Journal                  = {European Journal of Combinatorics},
  Year                     = {2005},
  Number                   = {5},
  Pages                    = {765--778},
  Volume                   = {26},

  Bibsource                = {DBLP, http://dblp.uni-trier.de},
  Ee                       = {http://dx.doi.org/10.1016/j.ejc.2004.01.008},
  doi = {10.1016/j.ejc.2004.01.008}
}

@Article{Hubicka2004,
  Title                    = {Finite Paths are Universal},
  Author                   = {Jan Hubi\v{c}ka and Jaroslav Ne\v{s}et\v{r}il},
  Journal                  = {Order},
  Year                     = {2004},
  Number                   = {3},
  Pages                    = {181--200},
  Volume                   = {21},

  Bibsource                = {DBLP, http://dblp.uni-trier.de},
  Ee                       = {http://dx.doi.org/10.1007/s11083-004-3345-9},
  doi = {10.1007/s11083-004-3345-9}
}

@Article{Kwuida2011,
  Title                    = {On the Homomorphism Order of Labeled Posets},
  Author                   = {Kwuida, Léonard and Lehtonen, Erkko},
  Journal                  = {Order},
  Year                     = {2011},
  Number                   = {2},
  Pages                    = {251--265},
  Volume                   = {28},
  doi = {10.1007/s11083-010-9169-x},
  ISSN                     = {0167-8094},
  Language                 = {English},
  Publisher                = {Springer Netherlands},
  Url                      = {http://dx.doi.org/10.1007/s11083-010-9169-x}
}

@Article{Lehtonen2008,
  Title                    = {Labeled posets are universal},
  Author                   = {Erkko Lehtonen},
  Journal                  = {European Journal of Combinatorics},
  Year                     = {2008},
  Number                   = {2},
  Pages                    = {493--506},
  Volume                   = {29},

  Bibsource                = {DBLP, http://dblp.uni-trier.de},
  Ee                       = {http://dx.doi.org/10.1016/j.ejc.2007.02.005},
  doi = {10.1016/j.ejc.2007.02.005}
}

@Article{Lehtonen2010,
  Title                    = {Minors of Boolean functions with respect to clique functions and hypergraph homomorphisms},
  Author                   = {Lehtonen, Erkko and Ne{\v{s}}et{\v{r}}il, Jaroslav},
  Journal                  = {European Journal of Combinatorics},
  Year                     = {2010},
  Number                   = {8},
  Pages                    = {1981--1995},
  Volume                   = {31},

  Publisher                = {Elsevier}
}

@Article{Leighton1982,
  Title                    = {Finite common coverings of graphs},
  Author                   = {Frank Thomson Leighton},
  Journal                  = {Journal of Combinatorial Theory, Series B},
  Year                     = {1982},
  Number                   = {3},
  Pages                    = {231--238},
  Volume                   = {33},
  doi = {10.1016/0095-8956(82)90042-9},
  ISSN                     = {0095-8956},
  Url                      = {http://www.sciencedirect.com/science/article/pii/0095895682900429}
}

@Article{Nesetril2000,
  Title                    = {Duality Theorems for Finite Structures (Characterising Gaps and Good Characterisations).},
  Author                   = {Jaroslav Ne{\v{s}}et{\v{r}}il and Claude Tardif},
  Journal                  = {Journal of Combinatorial Theory, Series B},
  Year                     = {2000},
  Number                   = {1},
  Pages                    = {80--97},
  Volume                   = {80},
  Date                     = {2005-02-15},
  Description              = {dblp},
  Ee                       = {http://dx.doi.org/10.1006/jctb.2000.1970},
  Url                      = {http://dblp.uni-trier.de/db/journals/jct/jctb80.html#NesetrilT00},
  doi = {10.1006/jctb.2000.1970}
}

@Article{Nesetril1971,
  Title                    = {Homomorphisms of derivative graphs},
  Author                   = {Jaroslav Ne\v{s}et{\v{r}}il},
  Journal                  = {Discrete Mathematics},
  Year                     = {1971},
  Number                   = {3},
  Pages                    = {257--268},
  Volume                   = {1}
}

@Article{Nesetril2006,
  Title                    = {Minimal universal and dense minor closed classes},
  Author                   = {Jaroslav Ne\v{s}et{\v{r}}il and Yared Nigussie},
  Journal                  = {European Journal of Combinatorics},
  Year                     = {2006},
  Number                   = {7},
  Pages                    = {1159--1171},
  Volume                   = {27},
  Date                     = {2007-01-23},
  Description              = {dblp},
  Ee                       = {http://dx.doi.org/10.1016/j.ejc.2006.06.012},
  Url                      = {http://dblp.uni-trier.de/db/journals/ejc/ejc27.html#NesetrilN06},
  doi = {10.1016/j.ejc.2006.06.012}
}

@Article{Neumann2011,
  Title                    = {On {L}eighton's graph covering theorem},
  Author                   = {Walter D. Neumann},
  Journal                  = {Groups, Geometry, and Dynamics},
  Year                     = {2011},
  Pages                    = {863--872},
  Volume                   = {4}
}

@Book{pultr1980combinatorial,
  Title                    = {Combinatorial, algebraic, and topological representations of groups, semigroups, and categories},
  Author                   = {Pultr, Ale\v{s} and Trnkov{\'a}, V\v{e}ra},
  Publisher                = {North-Holland Publishing Company},
  Year                     = {1980},
  Series                   = {North-Holland mathematical library},

  ISBN                     = {9780444850836},
  Lccn                     = {80499856},
  Url                      = {http://books.google.de/books?id=uyDMgbpXGY8C},
  Address = {Amsterdam}
}

@Article{Sumner1973,
  Title                    = {Point determination in graphs},
  Author                   = {David P. Sumner},
  Journal                  = {Discrete Mathematics},
  Year                     = {1973},
  Pages                    = {179--187},
  Volume                   = {5},
  Part                     = {2}
}

@Article{Welzl1982,
  Title                    = {Color-families are dense},
  Author                   = {Emo Welzl},
  Journal                  = {Theoretical Computer Science},
  Year                     = {1982},
  Number                   = {1},
  Pages                    = {29--41},
  Volume                   = {17},
  doi = {10.1016/0304-3975(82)90129-3},
  ISSN                     = {0304-3975},
  Url                      = {http://www.sciencedirect.com/science/article/pii/0304397582901293}
}

@MastersThesis{Xie2006,
  Title                    = {Obstructions to Trigraph Homomorphisms},
  Author                   = {Wing Xie},
  School                   = {Simon Fraser University},
  Year                     = {2006}
}

@Proceedings{DBLP:conf/isaac/2001,
  Title                    = {Algorithms and Computation, 12th International Symposium, ISAAC 2001, Christchurch, New Zealand, December 19-21, 2001, Proceedings},
  Year                     = {2001},
  Editor                   = {Peter Eades and Tadao Takaoka},
  Publisher                = {Springer},
  Series                   = {Lecture Notes in Computer Science},
  Volume                   = {2223},

  Bibsource                = {DBLP, http://dblp.uni-trier.de},
  Booktitle                = {ISAAC},
  ISBN                     = {3-540-42985-9}
}

@Proceedings{DBLP:conf/mfcs/2005,
  Title                    = {Mathematical Foundations of Computer Science 2005, 30th International Symposium, MFCS 2005, Gdansk, Poland, August 29 - September 2, 2005, Proceedings},
  Year                     = {2005},
  Editor                   = {Joanna Jedrzejowicz and Andrzej Szepietowski},
  Publisher                = {Springer},
  Series                   = {Lecture Notes in Computer Science},
  Volume                   = {3618},

  Bibsource                = {DBLP, http://dblp.uni-trier.de},
  Booktitle                = {MFCS},
  ISBN                     = {3-540-28702-7}
}

@Proceedings{DBLP:conf/isaac/2000,
  Title                    = {Algorithms and Computation, 11th International Conference, ISAAC 2000, Taipei, Taiwan, December 18-20, 2000, Proceedings},
  Year                     = {2000},
  Editor                   = {D. T. Lee and Shang-Hua Teng},
  Publisher                = {Springer},
  Series                   = {Lecture Notes in Computer Science},
  Volume                   = {1969},

  Bibsource                = {DBLP, http://dblp.uni-trier.de},
  Booktitle                = {ISAAC},
  ISBN                     = {3-540-41255-7}
}

@Book{k:Biggs74,
  author    = {Norman Biggs},
  title	    = {Algebraic Graph Theory},
  year	    = {1974},
  publisher = {Cambridge University Press},
  Address = {Cambridge}
}

@Book{k:Reidemeister32,
  author    = {Reidemeister, Kurt},
  title     = {Einf{\"u}hrung in die kombinatorische Topologie},
  language  = {German},
  publisher = {Braunschweig: Friedr. Vieweg\&Sohn A.-G.},
  year	    = {1932},
  Address = {Braunschweig}
}

@Article{n:Angluin80,
  author    = {Dana Angluin},
  title	    = {Local and global properties in networks of processors},
  journal   = {Proceedings of the 12th ACM Symposium on Theory of Computing},
  year	    = {1980},
  pages	    = {82--93}
}

@Article{n:MNS02,
  author    = {Malni{\v{c}}, Aleksander and Nedela, Roman and {\v{S}}koviera,
	      Martin},
  title	    = {Regular homomorphisms and regular maps},
  journal   = {European J. Combin.},
  fjournal  = {European Journal of Combinatorics},
  volume    = {23},
  year	    = {2002},
  number    = {4},
  pages	    = {449--461},
  issn	    = {0195-6698}
}

@article{Fiala2015Universality,
  author = {Fiala, Ji{\v{r}}{\'\i} and Hubi{\v{c}}ka, Jan and Long, Yangjing},
  title = {An universality argument for graph homomorphisms},
  journal = {Electronic Notes in Discrete Mathematics},
  volume = {49},
  pages = {643--649},
  year = {2015},
  doi = {10.1016/j.endm.2015.06.087},}

@article{Fiala2017GapsFull,
  author = {Fiala, Ji{\v{r}}{\'\i} and Hubi{\v{c}}ka, Jan and Long, Yangjing},
  title = {Gaps in full homomorphism order},
  journal = {Electronic Notes in Discrete Mathematics},
  volume = {61},
  pages = {429--435},
  year = {2017},
  doi = {10.1016/j.endm.2017.06.070},}

@article{Bulteau2024Algorithmic,
  author = {Laurent Bulteau and Konrad K. Dabrowski and Noleen K{\"o}hler and Sebastian Ordyniak and Dani{\"e}l Paulusma},
  title = {An Algorithmic Framework for Locally Constrained Homomorphisms},
  journal = {SIAM Journal on Discrete Mathematics},
  volume = {38},
  number = {2},
  pages = {1315--1350},
  year = {2024},
  doi = {10.1137/22M1513290}
}

@inproceedings{Dvorak2022ListLS,
  author = {Pavel Dvo{\v{r}}{\'a}k and Tom{\'a}{\v{s}} Masa{\v{r}}{\'i}k and Jana Novotn{\'a} and Monika Krawczyk and Pawe{\l} Rz{\k{a}}{\.z}ewski and Aneta {\.Z}uk},
  title = {{List Locally Surjective Homomorphisms in Hereditary Graph Classes}},
  booktitle = {33rd International Symposium on Algorithms and Computation (ISAAC 2022)},
  series = {Leibniz International Proceedings in Informatics (LIPIcs)},
  volume = {248},
  pages = {30},
  note = {Article 30, 15 pp.},
  publisher = {Schloss Dagstuhl -- Leibniz-Zentrum f{\"u}r Informatik},
  address = {Dagstuhl, Germany},
  year = {2022},
  doi = {10.4230/LIPIcs.ISAAC.2022.30}
}

@article{GuzmanPro2023FullPathsCycles,
  author = {Santiago Guzm{\'a}n-Pro},
  title = {Full-homomorphisms to paths and cycles},
  journal = {arXiv preprint arXiv:2212.13313},
  year = {2023},
  doi = {10.48550/arXiv.2212.13313}
}

@article{Larose2019SurjectiveReflexive,
  author = {Beno{\^\i}t Larose and Barnaby Martin and Dani{\"e}l Paulusma},
  title = {Surjective {$H$}-Colouring over Reflexive Digraphs},
  journal = {ACM Transactions on Computation Theory},
  volume = {11},
  number = {1},
  pages = {3},
  note = {Article 3, 21 pp.},
  year = {2019},
  doi = {10.1145/3282431}
}

@article{Focke2019CountingSurjective,
  author = {Jacob Focke and Leslie Ann Goldberg and Stanislav {\v{Z}}ivn{\'y}},
  title = {The Complexity of Counting Surjective Homomorphisms and Compactions},
  journal = {SIAM Journal on Discrete Mathematics},
  volume = {33},
  number = {2},
  pages = {1006--1043},
  year = {2019},
  doi = {10.1137/17M1153182}
}

@article{Chen2024SurjectiveCSP,
  author = {Hubie Chen},
  title = {Algebraic Global Gadgetry for Surjective Constraint Satisfaction},
  journal = {Computational Complexity},
  volume = {33},
  number = {1},
  pages = {7},
  year = {2024},
  doi = {10.1007/s00037-024-00253-4}
}

@article{KratochvilNedela2025CoversSemicovers,
  author = {Jan Kratochv{\'\i}l and Roman Nedela},
  title = {Graph covers and semi-covers: Who is stronger?},
  journal = {arXiv preprint arXiv:2504.17387},
  year = {2025},
  doi = {10.48550/arXiv.2504.17387}
}

@article{NaserasrSenSopena2020SignedOrder,
  author = {Reza Naserasr and Sagnik Sen and {\'E}rik Sopena},
  title = {The homomorphism order of signed graphs},
  journal = {Journal of Combinatorial Mathematics and Combinatorial Computing},
  volume = {116},
  pages = {169--182},
  year = {2020}
}

@article{Jakubikova2021Monounary,
  author = {Danica Jakub{\'i}kov{\'a}-Studenovsk{\'a}},
  title = {Homomorphism Order of Connected Monounary Algebras},
  journal = {Order},
  volume = {38},
  pages = {257--269},
  year = {2021},
  doi = {10.1007/s11083-020-09539-y}
}
\endgroup

\end{document}